\numberwithin{equation}{section}
\theoremstyle{plain}
\newtheorem{theorem}[equation]{Theorem}
\newtheorem{corollary}[equation]{Corollary}
\newtheorem{proposition}[equation]{Proposition}
\newtheorem{lemma}[equation]{Lemma}
\newtheorem{theorem-definition}[equation]{Theorem Definition}
\theoremstyle{definition}
\newtheorem{definition}[equation]{Definition}
\newtheorem{notation}[equation]{Notation}
\newtheorem{example}[equation]{Example}
\newtheorem{remark}[equation]{Remark}
 \DeclareMathOperator{\dv}{div}
\DeclareMathOperator{\diva}{\widehat{div}}
\DeclareMathOperator{\ocone}{\overline{cone}}
\DeclareMathOperator{\cha}{\widehat{CH}}
 \DeclareMathOperator{\CH}{CH}
\DeclareMathOperator{\za}{\widehat{Z}}
\DeclareMathOperator{\Zr}{Z}
\DeclareMathOperator{\rata}{\widehat{Rat}}
\DeclareMathOperator{\Td}{Td}
\DeclareMathOperator{\cht}{\widetilde{ch}}
\DeclareMathOperator{\chh}{\widehat{ch}}
 \DeclareMathOperator{\dd}{d}
\DeclareMathOperator{\Img}{Im}
\DeclareMathOperator{\ch}{ch} \DeclareMathOperator{\cl}{cl}
\DeclareMathOperator{\Ob}{Ob} 
 \DeclareMathOperator{\Ker}{Ker}
\DeclareMathOperator{\Hom}{Hom}
\DeclareMathOperator{\Id}{id}
\DeclareMathOperator{\Spec}{Spec}
\DeclareMathOperator{\amap}{a}
\DeclareMathOperator{\cmap}{c}
\DeclareMathOperator{\cone}{cone} 
\DeclareMathOperator{\bmap}{b}
\DeclareMathOperator{\KA}{\mathbf{KA}}
\DeclareMathOperator{\Db}{\mathbf{D}^{b}}
\DeclareMathOperator{\hDb}{\mathbf{\widehat{D}}^{b}}
\DeclareMathOperator{\oDb}{\overline{\mathbf{D}}^{b}}
\DeclareMathOperator{\Sm}{\mathbf{Sm}}
\DeclareMathOperator{\Reg}{\mathbf{Reg}}
\newcommand{\ov}{\overline}
\newcommand{\dra}{\dashrightarrow}
\newcommand{\ttwist}{\flat}
\newcommand{\ZZ}{{\mathbb Z}}
\newcommand{\RR}{{\mathbb R}}
\newcommand{\CC}{{\mathbb C}}
\newcommand{\QQ}{{\mathbb Q}}
\newcommand{\OO}{{\mathcal O}}
\newcommand{\PP}{{\mathbb P}}
\newcommand{\as}{{\text{\rm a}}}
\newcommand{\lllas}{{\text{\rm l, ll, a}}}
\newcommand{\D}{\text{{\rm cur}}}
\def\?{\ ???\ \immediate\write16{}%
\immediate\write16{Warning: There was still a question mark...}
\immediate\write16{}}
\begin{document}
\title{The arithmetic Grothendieck-Riemann-Roch theorem for general
  projective morphisms}

\author{Jos\'e Ignacio Burgos Gil} 
\address{Instituto de Ciencias Matem\'aticas (CSIC-UAM-UCM-UC3), 
Calle Nicol\'as Cabrera 15
28049 Madrid, Spain}
\email{burgos@icmat.es} 

\author{Gerard Freixas i Montplet}
\address{Institut de Math\'ematiques de Jussieu (IMJ), Centre National
  de la Recherche Scientifique (CNRS), France}
\email{freixas@math.jussieu.fr}

\author{R\u azvan Li\c tcanu}
\address{Faculty of Mathematics, University Al. I. Cuza Iasi, Romania}
\email{litcanu@uaic.ro}

\date{}

\begin{abstract}
  The classical arithmetic Grothendieck-Riemann-Roch theorem can be
  applied only 
  to projective morphisms that are smooth over the complex numbers. 
  In this paper we generalize the arithmetic Grothendieck-Riemann-Roch
  theorem to the case of general projective morphisms between regular
  arithmetic varieties. To this end we rely on the theory of
  generalized analytic torsion developed by the authors.
\end{abstract}

\maketitle

\section{Introduction}
The Grothendieck-Riemann-Roch theorem is a fundamental statement in
algebraic geometry. It describes the behavior of the Chern character
from algebraic $K$-theory to suitable cohomology theories (for
instance Chow groups), with respect to the push-forward operation by
proper maps. It provides a vast generalization of the classical
Riemann-Roch theorem on Riemann surfaces and the
Hirzebruch-Riemann-Roch theorem on compact complex manifolds. In their
development of arithmetic intersection theory, Gillet and Soul\'e were
lead to extend the Grothendieck-Riemann-Roch theorem to the context of
arithmetic varieties. In this setting, vector bundles are equipped
with additional smooth hermitian metrics, for which an extension of
algebraic $K$-theory can be defined. There is a theory of
characteristic classes for hermitian vector bundles, with values in
the so-called arithmetic Chow groups \cite{GilletSoule:vbhm}. In
analogy to the classical algebraic geometric setting, it is natural to
ask about the behavior of the arithmetic Chern character with respect
to proper push-forward. This is the question first addressed by
Gillet-Soul\'e  \cite{GilletSoule:aRRt} and later by
Gillet-R\"ossler-Soul\'e
\cite{GilletRoesslerSoule:_arith_rieman_roch_theor_in_higher_degrees}. These
works had to restrict to push-forward by morphisms which are smooth
over the complex points of the arithmetic varieties. This assumption
was necessary in defining both push-forward on arithmetic $K$-theory
and the arithmetic Chow groups. While on arithmetic Chow groups
push-forward resides on elementary operations (direct images of cycles
and fiber integrals of differential forms), they used holomorphic
analytic torsion on the arithmetic $K$-theory level
\cite{BismutKohler}.

The aim of this article is to extend the work of Gillet-Soul\'e and
Gillet-R\"ossler-Soul\'e to arbitrary projective morphisms of regular
arithmetic varieties. Hence we face to the difficulty of
non-smoothness of morphisms at the level of complex points. To
accomplish our program, we have to introduce generalized arithmetic
Chow groups and arithmetic $K$-theory groups which afford proper
push-forward functorialities for possibly non-smooth projective
morphisms. Loosely speaking, this is achieved by replacing smooth
differential forms in the theory of Gillet and Soul\'e by currents
with possibly non-empty wave front sets. To motivate the introduction
of these currents, we remark that they naturally appear as
push-forwards of smooth differential forms by morphisms whose critical
set is non-empty. In this concrete example, the wave front set of the
currents is controlled by the normal directions of the morphism. The
definition of our generalized arithmetic Chow groups is a variant of
the constructions of Burgos-Kramer-K\"uhn
\cite{BurgosKramerKuehn:cacg}, specially their covariant arithmetic
Chow groups. As an advantage with respect to \emph{loc. cit.}, the
presentation we give simplifies the definition of proper push-forward,
which is the main operation we have to deal with in the present
article. At the level of Chow groups, this operation relies on
push-forward of currents and keeps track of the wave front sets. For
arithmetic $K$-groups, we replace the analytic torsion forms of
Bismut-K\"ohler by a choice of a generalized analytic torsion theory
as developed in our previous work
\cite{BurgosFreixasLitcanu:GenAnTor}. While a generalized analytic
torsion theory is not unique, we proved it is uniquely determined by
the choice of a real genus. We establish an arithmetic
Grothendieck-Riemann-Roch theorem for arbitrary projective morphisms,
where this real genus replaces the $R$-genus of Gillet and Soul\'e. We
therefore obtain the most general possible formulation of the
theorem. In particular, the natural choice of the 0 genus,
corresponding to what we called the homogenous theory of analytic
torsion, provides an exact Grothendieck-Riemann-Roch type formula,
which is the formal translation of the classical algebraic geometric
theorem to the setting of Arakelov geometry. The present work is thus
the abutment of the articles \cite{BurgosLitcanu:SingularBC} (by the
first and third named authors) and
\cite{BurgosFreixasLitcanu:HerStruc}--\cite{BurgosFreixasLitcanu:GenAnTor}.

Let us briefly review the contents of this article. In section
\ref{section:GenArChow} we develop our new generalization of
arithmetic Chow groups, and consider as particular instances the
arithmetic Chow groups with currents of fixed wave front set. We study
the main operations, such as pull-back, push-forward and products. In
section \ref{section:ArKTheory} we carry a similar program to
arithmetic $K$-theory. We also consider an arithmetic version of our
hermitian derived categories \cite{BurgosFreixasLitcanu:HerStruc},
that is specially useful to deal with complexes of coherent sheaves
with hermitian structures. The essentials on arithmetic characteristic
classes are treated in section \ref{section:ArChar}. With the help of
our theory of generalized analytic torsion, section
\ref{section:DirectImage} builds push-forward maps on the level of
arithmetic derived categories and arithmetic $K$-theory. The last
section, namely section \ref{section:ARR}, is devoted to the statement
and proof of the arithmetic Grothendieck-Riemann-Roch theorem for
arbitrary projective morphisms of regular arithmetic varieties. As an
application, we compute the main characteristic numbers of the
homogenous theory, a question that was left open in
\cite{BurgosFreixasLitcanu:GenAnTor}.

\section{Generalized arithmetic Chow groups}\label{section:GenArChow}

Let $(A,\Sigma ,F_{\infty})$ be an arithmetic ring
\cite{GilletSoule:ait}: that is, $A$ is an excellent regular
Noetherian integral domain,
together with a finite non-empty set of embeddings $\Sigma$ of $A$
into $\CC$ and a linear 
conjugate involution $F_{\infty}$ of the product $\CC^{\Sigma}$ which
commutes with 
the diagonal embedding of $A$. 
Let $F$ be the field of fractions of $A$.
An \textit{arithmetic variety} $\mathcal{X}$ is a
flat and quasi-projective scheme over $A$ such that
$\mathcal{X}_{F}=\mathcal{X}\times \Spec F$
is smooth. Then $X_{\CC}:=\coprod_{\sigma \in \Sigma
}\mathcal{X}_{\sigma }(\CC) $ is a
complex algebraic manifold, which is endowed with an
anti-holomorphic automorphism $F_{\infty}$. One also associates to
$\mathcal{X}$ the real variety
$X=(X_{\CC},F_{\infty})$. Whenever we have arithmetic varieties
$\mathcal{X},\ \mathcal{Y}, \dots$ we will denote by $X_{\CC},\
Y_{\CC}, \dots$ the associated complex manifolds and by $X,\
Y, \dots$ the associated real manifolds.

To every regular arithmetic variety Gillet
and Soul\'e have associated arithmetic Chow groups, denoted $\cha
^{\ast}(\mathcal{X})$, and developed an
arithmetic intersection theory \cite{GilletSoule:ait}. 

The arithmetic Chow groups defined by Gillet and Soul\'e are only
covariant for morphism that are smooth on the generic fiber. Moreover
they are not suitable to study the kind of singular metrics that
appear naturally when dealing with non proper modular varieties.  
In order to have arithmetic Chow groups that are covariant with
respect to arbitrary proper morphism, or that are suitable to treat
certain kind of singular metrics,
in \cite{BurgosKramerKuehn:cacg}
different kinds
of arithmetic Chow groups are constructed, depending on the choice of a
Gillet sheaf of 
algebras $\mathcal{G}$ and a $\mathcal{G}$-complex $\mathcal{C}$. We
denote by $\cha^{\ast}(\mathcal{X},\mathcal{C})$ the arithmetic Chow groups
defined in \emph{op. cit.} Section 4.

The basic example of a Gillet algebra is the Deligne complex of
sheaves of differential forms with logarithmic singularities  
$\mathcal{D}_{\log}$, defined
in \cite[Definition 5.67]{BurgosKramerKuehn:cacg}; we refer to \emph{op. cit.}
for the precise definition and properties. Therefore, to any
$\mathcal{D}_{\log}$-complex we can associate arithmetic Chow
groups. In particular, considering $\mathcal{D}_{\log}$ itself as a
$\mathcal{D}_{\log}$-complex, we obtain
$\cha^{\ast}(\mathcal{X},\mathcal{D}_{\log})$, 
the arithmetic Chow groups defined in \cite[Section
6.1]{BurgosKramerKuehn:cacg}. When $\mathcal{X}_{F}$ is projective,
these groups agree, up to a normalization factor, with the groups
defined by Gillet and Soul\'e. 

The groups $\cha^{\ast}(\mathcal{X},\mathcal{C})$ introduced in \cite[Section
6.1]{BurgosKramerKuehn:cacg} have several technical issues:
they depend on the sheaf structure of $\mathcal{C}$ and not only on
the complex of global sections $\mathcal{C}(X)$; moreover, they are
not completely satisfactory if the cohomology determined by
$\mathcal{C}$ does not satisfy a weak purity property; finally the
definition of direct images is intricate. To overcome these
difficulties we introduce here a variant of the cohomological
arithmetic Chow groups that only depends on the complex of global
sections of a $\mathcal{D}_{\log}$-complex. 

\begin{definition}\label{def:3} Let $\mathcal{X}$ be an arithmetic variety,
  $X_{\CC}=\mathcal{X}_{\Sigma }$ the associated complex manifold and
  $X=(X_{\CC},F_{\infty})$ the associated real manifold.
  A \emph{$\mathcal{D}_{\log}(X)$-complex} is a graded complex of
  real vector spaces $C^{\ast}(\ast)$ provided with a morphism of
  graded complexes
  \begin{displaymath}
    \cmap\colon \mathcal{D}_{\log}^{\ast}(X,\ast)\longrightarrow
    C^{\ast}(\ast).  
  \end{displaymath}
 Given two $\mathcal{D}_{\log}(X)$-complexes $C$ and $C'$, we say that $C'$ is a $C$-complex if there is a commutative diagram of morphisms of graded complexes
 \begin{displaymath}
 	\xymatrix{
		\mathcal{D}_{\log}^{\ast}(X,\ast)\ar[r]^{c}\ar[rd]^{c'}	&C^{\ast}(\ast)\ar[d]^{\varphi}\\
			&C^{\prime\ast}(\ast).
	}
 \end{displaymath}
 In this situation, we say that $\varphi$ is a morphism of $\mathcal{D}_{\log}(X)$-complexes.
\end{definition}

We stress the fact that a $\mathcal{D}_{\log}$-complex is a complex of
sheaves while a $\mathcal{D}_{\log}(X)$-complex is a complex of vector
spaces. 
If $\mathcal{C}$ is a $\mathcal{D}_{\log}$-complex
  of real vector spaces, then the complex of global sections
  $\mathcal{C}^{\ast}(X,\ast)$ is  a $\mathcal{D}_{\log}(X)$-complex.
We are mainly interested in the
  $\mathcal{D}_{\log}(X)$-complexes of Example \ref{exm:1} made out of
  differential forms and 
  currents. We will follow the conventions of \cite[Section
5.4]{BurgosKramerKuehn:cacg} regarding differential forms and
currents. In particular, both the current associated to a differential form
and the current associated to a cycle have implicit a power of the trivial
period $2\pi i$.

\begin{example} \label{exm:1}
\begin{enumerate}
\item \label{item:4} The Deligne complex $\mathcal{D}^{\ast}_{\as}(X,\ast)$ of
differential forms on $X$ with arbitrary
singularities at infinity. Namely, if $E^{\ast}(X_{\CC})$ is the Dolbeault
complex 
(\cite[Definition 5.7]{BurgosKramerKuehn:cacg}) of differential forms on
$X_{\CC}$ then  
\begin{displaymath}
  \mathcal{D}_{\as}^{\ast}(X,\ast)=
\mathcal{D}^{\ast}(E^{\ast}(X_{\CC}),\ast)^{\sigma},
\end{displaymath}
where $\mathcal{D}^{\ast}(\underline{\ },\ast)$ denotes the Deligne
complex
(\cite[Definition 5.10]{BurgosKramerKuehn:cacg})
associated to a Dolbeault complex and $\sigma $
is the involution $\sigma (\eta)=\overline
{F_{\infty}^{\ast}\eta}$ as in \cite[Notation
5.65]{BurgosKramerKuehn:cacg}.

Note that $\mathcal{D}^{\ast}_{\as}(X,\ast)$ is the complex of
global sections of the $\mathcal{D}_{\log}$-complex
$\mathcal{D}^{\ast}_{\lllas}$ that appears in \cite[Section
3.6]{BurgosKramerKuehn:accavb} with empty log-log singular locus. In
particular, by \cite[Theorem 
3.9]{BurgosKramerKuehn:accavb} it satisfies the weak purity
condition. 
\item \label{item:5} The Deligne complex $\mathcal{D}^{\ast}_{\D,\as}(X,\ast)$
  of currents on $X$. Namely, if $D^{\ast}(X_{\CC})$ is the Dolbeault
  complex of currents on $X_{\CC}$ then
\begin{displaymath}
  \mathcal{D}_{\D,\as}^{\ast}(X,\ast)=
\mathcal{D}^{\ast}(D^{\ast}(X_{\CC}),\ast)^{\sigma}.
\end{displaymath}
Note that here we are considering arbitrary
currents on $X_{\CC}$ and not extendable currents as in 
\cite[Definition 6.30]{BurgosKramerKuehn:cacg}. 
\item \label{item:6} Let $T^{\ast}X_{\CC}$ be the cotangent bundle of
  $X_{\CC}$. Denote by 
  $T_{0}^{\ast}X_{\CC}=T^{\ast}X_{\CC}\setminus X_{\CC}$ 
  the cotangent bundle with the zero section deleted and let $S\subset
  T_{0}^{\ast}X_{\CC}$ be a closed conical subset that is invariant under
  $F_{\infty}$. Let $D^{\ast}(X_{\CC},S)$ be the
  complex of currents whose wave front set is contained in $S$
  \cite[Section 4]{BurgosLitcanu:SingularBC}. The  Deligne complex
  of currents on $X$ having the wave front set
  included in the fixed set $S$ is given by
\begin{displaymath}
  \mathcal{D}_{\D,\as}^{\ast}(X,S,\ast)=
\mathcal{D}^{\ast}(D^{\ast}(X_{\CC},S),\ast)^{\sigma}.
\end{displaymath}
\end{enumerate}
The maps of complexes $\mathcal{D}^{\ast}_{\as}(X,\ast)\to
\mathcal{D}^{\ast}_{\D,\as}(X,\ast)$ given by $\eta\mapsto
[\eta]$ is injective and makes of $\mathcal{D}_{\D,\as}(X,\ast)$ a $\mathcal{D}^{\ast}_{\as}(X,\ast)$-complex. We will use this map to identify
$\mathcal{D}^{\ast}_{\as}$ with a subcomplex of
$\mathcal{D}^{\ast}_{\D,\as}$. Since $\mathcal{D}^{\ast}_{\as}(X,\ast)=
\mathcal{D}^{\ast}_{\D,\as}(X,\emptyset,\ast)$ and
$\mathcal{D}^{\ast}_{D,\as}(X,\ast)= 
\mathcal{D}^{\ast}_{\D,\as}(X,T^{\ast}_{0}X,\ast)$, examples
\ref{item:4} and \ref{item:5} are particular cases of \ref{item:6}.     
\end{example}
\begin{remark}
With these examples at hand, we can specialize the definition of
$C$-complex (Definition \ref{def:3}) to $C=\mathcal{D}_{\as}(X)$. When
dealing with hermitian structures on sheaves on non-necessarily proper
varieties, we will to work with
$\mathcal{D}_{\as}(X)$-complexes rather than
$\mathcal{D}_{\log}(X)$-complexes. 
\end{remark}
We will also follow the notation of \cite[Section
3]{BurgosKramerKuehn:cacg} regarding complexes. In particular we will
write
\begin{displaymath}
  \widetilde C^{2p-1}(p)=C^{2p-1}(p)/\Img \dd_{C}, \quad
  \Zr C^{2p}(p)=\Ker \dd_{C}\cap  C^{2p}(p).
\end{displaymath}

\begin{definition}\label{def:1}
  The \emph{arithmetic Chow groups with $C$ coefficients} are defined
  as 
  \begin{equation}\label{pfformula}
    \cha^p(\mathcal{X},C)=\cha^p(\mathcal{X},\mathcal{D}_{\log})\times
    \widetilde C^{2p-1}(p)/\sim
  \end{equation}
  where $\sim$ is the equivalence relation generated by
  \begin{equation}\label{equivdef}
    (a(g),0)\sim (0,c(g)).
  \end{equation}
\end{definition}
If $\varphi:C\rightarrow C'$ is a morphism of
$\mathcal{D}_{\log}(X)$-complexes (so that $C'$ is a $C$-complex),
then there is a natural surjective morphism
\begin{equation}
  \cha^{p}(\mathcal{X},C)\times\widetilde{C^{\prime}}^{2p-1}(p)
  \longrightarrow\cha^{p}(\mathcal{X},C'). \label{eq:10}  
\end{equation}
Introducing the equivalence relation generated by
\begin{displaymath}
  ((0,c),0)\equiv ((0,0),\varphi(c)),
\end{displaymath}
we see that \eqref{eq:10} induces an isomorphism
\begin{equation}\label{eq:11}
  \cha^{p}(\mathcal{X},C)\times\widetilde{C}^{\prime 2p-1}(p)/
  \equiv\;\overset{\sim}{\longrightarrow}\cha^{p}(\mathcal{X},C').
\end{equation}

We next unwrap Definition \ref{def:1} in order to get simpler
descriptions of the arithmetic Chow groups associated to the complexes of
Example \ref{exm:1}. We start by recalling the construction of
$\cha^p(\mathcal{X},\mathcal{D}_{\log})$. The group of codimension $p$
arithmetic cycles is given by
\begin{displaymath}
  \za^{p}(\mathcal{X},\mathcal{D}_{\log})=
\left\{ (Z,\widetilde g)\in \Zr^{p}(\mathcal{X})\times 
  \widetilde{\mathcal{D}}_{\log}^{2p-1}(X\setminus \mathcal{Z}^{p},p)\left\vert
  \begin{aligned}[c]
    \dd_{\mathcal{D}}\widetilde g&\in \mathcal{D}_{\log}^{2p}(X,p)\\
    \cl(Z)&=[(\dd_{\mathcal{D}}\widetilde g,\widetilde g)]
  \end{aligned}
  \right.\right\},
\end{displaymath}
where $\Zr^{p}(\mathcal{X})$ is the group of codimension $p$
algebraic cycles of $\mathcal{X}$, $\mathcal{Z}^{p}$ is the ordered
system of codimension at least $p$ closed subsets of $X$,
\begin{displaymath}
\widetilde{\mathcal{D}}_{\log}^{2p-1}(X\setminus
\mathcal{Z}^{p},p) =\lim_{\substack{\longrightarrow\\W\in \mathcal{Z}^{p}}}
\widetilde{\mathcal{D}}_{\log}^{2p-1}(X\setminus W,p),  
\end{displaymath}
and $\cl(Z)$ and
$[(\dd_{\mathcal{D}}\widetilde g,\widetilde g)]$ denote the class in
the real Deligne-Beilinson cohomology group
$H^{2p}_{\mathcal{D},\mathcal{Z}^{p}}(X,\RR(p))$ with supports on
$\mathcal{Z}^{p}$ of the cycle $Z$ and the pair
$(\dd_{\mathcal{D}}\widetilde g,\widetilde g)$ respectively.

For each codimension $p-1$ irreducible variety $W$ and each rational
function $f\in K(W)$, there is a class $[f]\in
H^{2p-1}_{\mathcal{D}}(X\setminus |\dv f|,\RR(p))$. Hence a
class $\bmap([f])\in
\widetilde{\mathcal{D}}_{\log}^{2p-1}(X\setminus
\mathcal{Z}^{p},p)$ that is denoted $\mathfrak{g}(f)$. Then
$\rata^{p}(\mathcal{X},\mathcal{D}_{\log})$ is the group generated by the elements of
the form $\diva(f)=(\dv(f),\mathfrak{g}(f))$.

Then
\begin{displaymath}
  \cha^p(\mathcal{X},\mathcal{D}_{\log})=
  \za^{p}(\mathcal{X},\mathcal{D}_{\log})\left/
\rata^{p}(\mathcal{X},\mathcal{D}_{\log})\right. .
\end{displaymath}

We will use the following well stablished notation.
If $B$ is any subring of $\RR$ we will denote by
$\Zr^{p}_{B}(\mathcal{X})=\Zr^{p}(\mathcal{X})\otimes B$, by
$\za^{p}_{B}(\mathcal{X},\mathcal{D}_{\log})$ the group with the same
definition as $\za^{p}(\mathcal{X},\mathcal{D}_{\log})$ with
$\Zr^{p}_{B}(\mathcal{X})$ instead of $\Zr^{p}(\mathcal{X})$, and we
write
$\rata^{p}_{B}(\mathcal{X},\mathcal{D}_{\log})=
\rata^{p}(\mathcal{X},\mathcal{D}_{\log})\otimes B$. Finaly we write
\begin{equation*}
  \cha^p_{B}(\mathcal{X},\mathcal{D}_{\log})=
  \za^{p}_{B}(\mathcal{X},\mathcal{D}_{\log})\left/
\rata^{p}_{B}(\mathcal{X},\mathcal{D}_{\log})\right.
\end{equation*}
Note that
$\cha^p_{\QQ}(\mathcal{X},\mathcal{D}_{\log})=\cha^p(\mathcal{X},\mathcal{D}_{\log})\otimes
\QQ$ but, in general, $\cha^p_{\RR}(\mathcal{X},\mathcal{D}_{\log})\not
= \cha^p(\mathcal{X},\mathcal{D}_{\log})\otimes \RR$. We will use the
same notation for all variants of the arithmetic Chow groups.

Now, in the definition of $\cha^{p}(\mathcal{X},C)$ we can first
change coefficients and then take rational equivalence. We define
\begin{displaymath}
  \za^p(\mathcal{X},C)=\za^p(\mathcal{X},\mathcal{D}_{\log})\times
  \widetilde C^{2p-1}(p)/\sim
\end{displaymath}
where again $\sim$ is the equivalence relation generated by
$(a(g),0)\sim (0,c(g))$.

There are maps
\begin{alignat*}{2}
\zeta_{C}&\colon
\za^{p}(\mathcal{X},C)\longrightarrow \Zr^{p}(\mathcal{X}),
& \zeta_{C}((Z,\widetilde g),\widetilde c)&=Z,
\\
\amap_{C}&\colon \widetilde{C}^{2p-1}(p)\longrightarrow\za^{p}
(\mathcal{X},C),
& \amap_{C}(\widetilde c)&=((0,0),-\widetilde c),\\
\omega_{C}&\colon \za^{p}(\mathcal{X},C)\longrightarrow \Zr C^{2p}(p), &
\qquad \omega_{C}((Z,\widetilde g),\widetilde c)&=
\cmap(\dd_{\mathcal{D}}\widetilde g)+\dd_{C}\widetilde c.
\end{alignat*}
We also consider the map 
$$\bmap_{C}\colon H^{2p-1}(C^{\ast}(p))\to \za^{p} (\mathcal{X},C)$$
obtained by composing $\amap_{C}$ 
with the inclusion $H^{2p-1}(C^{\ast}(p))\to \widetilde C^{2p-1}(p)$,
and the map $$\rho _{C}\colon \CH^{p,p-1}(\mathcal{X})\to
H^{2p-1}(C^{\ast}(p))$$ obtained by composing the regulator map $\rho
\colon \CH^{p,p-1}(\mathcal{X})\to H^{2p-1}_{\mathcal{D}}(X,\RR(p))$
in \cite[Notation 4.12]{BurgosKramerKuehn:cacg}
with the map $\cmap\colon H^{2p-1}_{\mathcal{D}}(X,\RR(p))\to
H^{2p-1}(C^{\ast}(p))$. We will also denote by $\rho _{C}$ the
analogous map with target $\widetilde C^{2p-1}(p)$.

There are induced maps
\begin{align*}
\zeta_{C}&\colon
\cha^{p}(\mathcal{X},C)\longrightarrow\CH^{p}(\mathcal{X}),
\\
\amap_{C}&\colon \widetilde{C}^{2p-1}(p)\longrightarrow\cha^{p}
(\mathcal{X},C),\\
\omega_{C}&\colon \cha^{p}(\mathcal{X},C)\longrightarrow\Zr C^{2p}(p).
\end{align*}

\begin{lemma}\label{lemm:1}
  \begin{enumerate}
  \item \label{item:1} Let $\rata ^{p}(\mathcal{X},C)$ denote the
    image of $\rata 
    ^{p}(\mathcal{X},\mathcal{D}_{\log})$ in the group 
    $\za^p(\mathcal{X},C)$. Then
    \begin{displaymath}
      \cha^p(\mathcal{X},C)=
      \za^{p}(\mathcal{X},C)\left/
        \rata^{p}(\mathcal{X},C)\right. .      
    \end{displaymath}
  \item \label{item:2} There is an exact sequence
    \begin{equation}\label{eq:3}
      0\to \widetilde C^{2p-1}(p)\overset{\amap_{C}}{\longrightarrow
      }\za^{p}(\mathcal{X},C)
      \overset{\zeta_{C}}{\longrightarrow }
      \Zr^{p}(\mathcal{X}) \to 0.
    \end{equation}
  \item \label{item:3} There are exact sequences
\begin{equation}\label{eq:1}
\CH^{p,p-1}(\mathcal{X})\overset{\rho_{C}}{\longrightarrow }
\widetilde{C}^{2p-1}(p)\overset{\amap_{C}}{\longrightarrow }
\cha^p(\mathcal{X},C) 
\overset{\zeta_{C}}{\longrightarrow }  \CH^p(\mathcal{X})\to 0,
\end{equation}
and
\begin{multline}\label{eq:2}
  \CH^{p,p-1}(\mathcal{X})\overset{\rho_{C}}{\longrightarrow }
  H^{2p-1}(C^{\ast}(p))\overset{\bmap_{C}}{\longrightarrow }\cha^p(\mathcal{X},C)
  \overset{\zeta_{C}\oplus \omega _{C}}{\longrightarrow }
 \\  \CH^p(\mathcal{X})\oplus\Zr C^{2p}(p)
\longrightarrow  H^{2p}(C^{\ast}(p)) \to 0.
\end{multline}
 \end{enumerate}
  \end{lemma}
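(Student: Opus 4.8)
The plan is to deduce everything from Definition \ref{def:1}, the exact sequence defining $\cha^{p}(\mathcal{X},\mathcal{D}_{\log})$ (the classical Gillet–Soul\'e exact sequences, recalled just above), and the bookkeeping of the equivalence relation $\sim$ generated by $(a(g),0)\sim(0,c(g))$. The underlying principle is that passing from $\mathcal{D}_{\log}$ to a $\mathcal{D}_{\log}(X)$-complex $C$ amounts to adjoining the group $\widetilde C^{2p-1}(p)$ and then killing the image of $\widetilde{\mathcal{D}}_{\log}^{2p-1}(X,p)$ under the difference of its two maps into $\za^{p}(\mathcal{X},\mathcal{D}_{\log})\times\widetilde C^{2p-1}(p)$; so each of the three claims is a pushout/quotient computation that reduces to the known $\mathcal{D}_{\log}$ case by a diagram chase.

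For \eqref{item:1}, I would argue that forming the quotient by rational equivalence commutes with the change-of-coefficients quotient defining $\za^{p}(\mathcal{X},C)$: concretely, since $\rata^{p}(\mathcal{X},C)$ is by definition the image of $\rata^{p}(\mathcal{X},\mathcal{D}_{\log})$, one has a commuting square of surjections $\za^{p}(\mathcal{X},\mathcal{D}_{\log})\times\widetilde C^{2p-1}(p)\twoheadrightarrow \za^{p}(\mathcal{X},C)$ and $\za^{p}(\mathcal{X},\mathcal{D}_{\log})\times\widetilde C^{2p-1}(p)\twoheadrightarrow \cha^{p}(\mathcal{X},\mathcal{D}_{\log})\times\widetilde C^{2p-1}(p)$, and both quotients $\cha^{p}(\mathcal{X},C)$ and $\za^{p}(\mathcal{X},C)/\rata^{p}(\mathcal{X},C)$ are obtained by further quotienting by (the images of) $\sim$ together with the relations $\diva(f)\sim 0$; since these two sets of relations generate the same subgroup regardless of the order in which they are imposed, the two quotients coincide. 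For \eqref{item:2}, injectivity of $\amap_{C}$ and exactness in the middle is checked directly from the presentation $\za^{p}(\mathcal{X},C)=(\za^{p}(\mathcal{X},\mathcal{D}_{\log})\times\widetilde C^{2p-1}(p))/\sim$: the kernel of $\zeta_{C}$ consists of classes of pairs $((0,\widetilde g),\widetilde c)$ with $(0,\widetilde g)\in\za^{p}(\mathcal{X},\mathcal{D}_{\log})$, i.e. with $\dd_{\mathcal{D}}\widetilde g=0$ and $[\widetilde g]=0$ in cohomology, hence $\widetilde g=a(g)$ for some $g$ (this is the classical identification $\ker(\zeta)\cong\widetilde{\mathcal{D}}_{\log}^{2p-1}(X,p)$), and then the relation $(a(g),0)\sim(0,c(g))$ shows the class equals $\amap_{C}(\widetilde c - c(g))$; injectivity follows because the only relations in $\za^{p}(\mathcal{X},C)$ that land in the $\widetilde C^{2p-1}(p)$-factor are generated by $c(g)$ for $g$ with $a(g)=0$, and $a(g)=0$ forces $g$ to map to zero already in $\widetilde{\mathcal{D}}_{\log}^{2p-1}(X,p)$ modulo the regulator image — this is where one must be slightly careful.

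For \eqref{item:3}, the sequence \eqref{eq:1} is obtained from \eqref{eq:3} by dividing both $\za^{p}(\mathcal{X},C)$ and $\Zr^{p}(\mathcal{X})$ by rational equivalence: the snake-type argument gives exactness at $\cha^{p}(\mathcal{X},C)$ and at $\CH^{p}(\mathcal{X})$ (surjectivity of $\zeta_{C}$ being immediate), while exactness at $\widetilde C^{2p-1}(p)$ says precisely that the kernel of $\amap_{C}$ on the quotient level is the image of rational equivalence, which under the identification of \eqref{item:2} is exactly $\rho_{C}(\CH^{p,p-1}(\mathcal{X}))$ — this uses the definition of $\rho_{C}$ via the regulator and the fact that $\mathfrak g(f)=\bmap([f])$ realizes the regulator class. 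For \eqref{eq:2} I would combine \eqref{eq:1} with the tautological exact sequence $H^{2p-1}(C^{\ast}(p))\to\widetilde C^{2p-1}(p)\xrightarrow{\dd_{C}}\Zr C^{2p}(p)\to H^{2p}(C^{\ast}(p))\to 0$ coming from the definition $\widetilde C^{2p-1}(p)=C^{2p-1}(p)/\Img\dd_{C}$, splicing the two via the maps $\omega_{C}$ and $\bmap_{C}$; the only real content is checking that $\omega_{C}$ composed with $\amap_{C}$ is $\dd_{C}$ and that the resulting sequence is exact at $\CH^{p}(\mathcal{X})\oplus\Zr C^{2p}(p)$, which again reduces to the classical statement that $(\zeta,\omega)$ surjects onto pairs $(\text{cycle class},\text{closed form})$ with matching cohomology class. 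The main obstacle is bookkeeping rather than mathematical: organizing the several overlapping equivalence relations so that the reductions to the known $\mathcal{D}_{\log}$-case in \cite{BurgosKramerKuehn:cacg} are transparent, and in particular pinning down exactly the identification of $\ker\amap_{C}$ in \eqref{item:2}, since a careless chase there could overlook that the regulator image already appears inside $\widetilde{\mathcal{D}}_{\log}^{2p-1}(X,p)$.
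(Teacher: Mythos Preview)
Your approach is essentially the same as the paper's: everything reduces to the known $\mathcal{D}_{\log}$ case via the definition of $\za^{p}(\mathcal{X},C)$ and $\cha^{p}(\mathcal{X},C)$ as quotients. The paper's proof is in fact very brief --- it cites the classical exact sequence $0\to\widetilde{\mathcal{D}}_{\log}^{2p-1}(X,p)\xrightarrow{\amap}\za^{p}(\mathcal{X},\mathcal{D}_{\log})\xrightarrow{\zeta}\Zr^{p}(\mathcal{X})\to 0$ from \cite[Prop.~5.5]{Burgos:CDB} for \ref{item:2} and the exact sequences of \cite[Thm.~7.3]{Burgos:CDB} for \ref{item:3}, and says the rest follows from the definitions; your plan is exactly the unpacking of what those citations mean.

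One correction concerning \ref{item:2}: your worry about injectivity of $\amap_{C}$ is misplaced, and the phrase ``modulo the regulator image'' should not appear there. At the \emph{cycle} level the classical map $\amap\colon\widetilde{\mathcal{D}}_{\log}^{2p-1}(X,p)\to\za^{p}(\mathcal{X},\mathcal{D}_{\log})$ is already injective, with no regulator correction. The subgroup generating $\sim$ is $\{(\amap(g),-\cmap(g)):g\in\widetilde{\mathcal{D}}_{\log}^{2p-1}(X,p)\}$; if $((0,0),-\widetilde c)$ lies in it then $\amap(g)=0$, hence $g=0$, hence $\widetilde c=0$. The regulator enters only at the Chow-group level in \ref{item:3}, where passing to the quotient by $\rata^{p}$ makes $\amap_{C}$ no longer injective and its kernel is precisely $\rho_{C}(\CH^{p,p-1}(\mathcal{X}))$. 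Keeping the cycle and Chow levels cleanly separated dissolves the ``slightly careful'' point you flagged.
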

  \begin{proof}
    \ref{item:1} Follows easily from the definition.

    \ref{item:2} By \cite[Proposition 5.5]{Burgos:CDB} there is an
    exact sequence
    \begin{displaymath}
      0\to \widetilde
      {\mathcal{D}}_{\log}^{2p-1}(X,p)\overset{\amap}{\longrightarrow 
      }\za^{p}(\mathcal{X},\mathcal{D}_{\log})
      \overset{\zeta}{\longrightarrow }
      \Zr^{p}(\mathcal{X}) \to 0.      
    \end{displaymath}
    From it and the definition of
    $\za^{p}(\mathcal{X},\mathcal{D}_{\log})$ we derive the exactness of
    \eqref{eq:3}.  

    \ref{item:3} Follows from the exact sequences of \cite[Theorem
    7.3]{Burgos:CDB} and the definition of $\cha^p(\mathcal{X},C)$.
  \end{proof}

The contravariant functoriality of
$\cha^{\ast}(\mathcal{X},\mathcal{D}_{\log})$ is easily translated to
other coefficients.
Let $f\colon \mathcal{X}\to \mathcal{Y}$ be a morphism of regular arithmetic
varieties. 
Let $C$ be a $\mathcal{D}_{\log}(X)$-complex and $C'$ a
$\mathcal{D}_{\log}(Y)$-complex, such that
there exists a map of complexes $f^{\ast}:C'^{\ast}(\ast)\to
C^{\ast}(\ast)$ that makes the 
following diagram commutative:
\begin{displaymath}
  \xymatrix{
  \mathcal{D}_{\log}^{\ast}(Y)\ar[r]^{f^{\ast}}\ar[d] & \mathcal{D}_{\log}^{\ast}(X) \ar[d] \\
  C'^{\ast}(Y)\ar[r]_{f^{\ast}} & C^{\ast}(X).
}
\end{displaymath}
Then we define
$$f^{\ast}((\mathcal{Z},g),c)=(f^{\ast}(\mathcal{Z},g),f^{\ast}(c)).$$
It is easy to see that this map is well defined, because the pull-back map
$f^{\ast}:\cha^{\ast}(\mathcal{Y})\to \cha^{\ast}(\mathcal{X})$ (for
the Chow groups corresponding to 
$\mathcal{D}_{\log}$) is compatible to the map $\amap$. 

Before stating concrete examples of this contravariant functoriality
we need some notation (\cite[Theorem 8.2.4]{Hormander:MR1065993},
 see also \cite[Section 4]{BurgosLitcanu:SingularBC}). Let
 $f_{\CC}\colon X_{\CC}\to Y_{\CC}$ denote 
the induced map of complex manifolds. Let $N_{f}$ be the set of normal
directions of $f_{\CC}$,  that is
 \begin{displaymath}
   N_{f}=\{(f(x),\xi)\in T_{0}^{\ast}Y_{\CC}\mid \dd
   f_{\CC}^{t}\xi=0\}. 
 \end{displaymath}
 Let $S\subset
  T^{\ast}_{0}Y_{\CC}$ be a closed conical subset invariant under
  $F_{\infty}$. When
  $N_{f}\cap S=\emptyset$, the function  $f$ is said to be transverse
  to $S$. In this case we write
  \begin{displaymath}
    f^{\ast}S=\{(x,\dd f_{\CC}^{t}\xi)\mid (f(x),\xi)\in S\}.
  \end{displaymath}
  It is a closed conical subset of $T^{\ast}_{0}X_{\CC}$ invariant under
  $F_{\infty}$.

\begin{proposition} \label{prop:3}
  Let $f\colon \mathcal{X}\to \mathcal{Y}$ be a morphism of regular arithmetic
  varieties.
  \begin{enumerate}
  \item There is a pull-back morphism $f^{\ast}\colon
    \cha^{p}(\mathcal{Y},\mathcal{D}_{\as}(Y)) \to
    \cha^{p}(\mathcal{X},\mathcal{D}_{\as}(X))$.
  \item Let $N_{f}$ be the set of normal directions of $f_{\CC}$
    and $S\subset
    T^{\ast}_{0}Y_{\CC}$ a closed conical subset invariant under
    $F_{\infty}$. If $N_{f}\cap
    S=\emptyset$, then there is a pull-back morphism
    \begin{displaymath} f^{\ast}\colon
      \cha^{p}(\mathcal{Y},\mathcal{D}_{\D,\as}(Y,S)) \to
      \cha^{p}(\mathcal{X},\mathcal{D}_{\D,\as}(X,f^{\ast}S)).
    \end{displaymath}
  \item If $f_{F}$ is smooth (hence $N_{f}=\emptyset$) then there is a
    pull-back morphism 
    $$f^{\ast}\colon
    \cha^{p}(\mathcal{Y},\mathcal{D}_{\D,\as}(Y)) \to
    \cha^{p}(\mathcal{X},\mathcal{D}_{\D,\as}(X)).$$
  \end{enumerate}
\end{proposition}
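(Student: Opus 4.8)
The plan is to reduce all three statements to the existing contravariant functoriality for $\mathcal{D}_{\log}$-coefficients recalled just before the proposition, combined with the description in \eqref{eq:11} of $\cha^p(\mathcal{X},C)$ in terms of $\cha^p(\mathcal{X},\mathcal{D}_{\log})$ and the truncated complex $\widetilde C^{2p-1}(p)$. Since a morphism $f\colon\mathcal{X}\to\mathcal{Y}$ of regular arithmetic varieties induces a pull-back $f^{\ast}\colon\cha^{p}(\mathcal{Y},\mathcal{D}_{\log})\to\cha^{p}(\mathcal{X},\mathcal{D}_{\log})$ compatible with the map $\amap$, by the discussion preceding the proposition it suffices in each case to produce a morphism of graded complexes $f^{\ast}\colon C'^{\ast}(\ast)\to C^{\ast}(\ast)$ (with $C'$ the relevant complex on $Y$ and $C$ the one on $X$) fitting into the commutative square with the $\mathcal{D}_{\log}$-pull-backs. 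The only genuine content is therefore the analytic statement that pull-back of differential forms, respectively currents with controlled wave front set, is well-defined at the level of the Deligne complexes of Example \ref{exm:1}.

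For part (i), the map $f_{\CC}^{\ast}\colon E^{\ast}(Y_{\CC})\to E^{\ast}(X_{\CC})$ on Dolbeault complexes of smooth forms is standard; it is compatible with the bigrading, with $d$, $\partial$, $\bar\partial$, and with the real structure $\sigma$, hence induces $f^{\ast}\colon\mathcal{D}^{\ast}_{\as}(Y,\ast)\to\mathcal{D}^{\ast}_{\as}(X,\ast)$ as a morphism of Deligne complexes. Since $\mathcal{D}^{\ast}_{\as}(X)$ is by construction the complex of global sections of the $\mathcal{D}_{\log}$-sheaf $\mathcal{D}^{\ast}_{\lllas}$ with empty log-log locus, the required compatibility square with the pull-back on $\mathcal{D}_{\log}$ is just the global-sections evaluation of the functoriality of $\mathcal{D}_{\log}$-complexes, so part (i) follows immediately. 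Part (iii) is the special case $S=\emptyset$ of part (ii) applied with target all of $T^{\ast}_0 Y_{\CC}$: if $f_{F}$ is smooth then $f_{\CC}$ is a submersion, so $N_{f}=\emptyset$, hence $f$ is transverse to $S=T^{\ast}_0 Y_{\CC}$ and $f^{\ast}S=T^{\ast}_0 X_{\CC}$, which gives exactly the pull-back on $\mathcal{D}_{\D,\as}(Y)=\mathcal{D}_{\D,\as}(Y,T^{\ast}_0 Y_{\CC})$.

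The heart of the matter is part (ii), and the main obstacle is purely analytic: one must check that when $N_{f}\cap S=\emptyset$, H\"ormander's pull-back of currents (\cite[Theorem 8.2.4]{Hormander:MR1065993}, as recalled in the paragraph before the proposition and in \cite[Section 4]{BurgosLitcanu:SingularBC}) defines a continuous linear map $f_{\CC}^{\ast}\colon D^{\ast}(Y_{\CC},S)\to D^{\ast}(X_{\CC},f^{\ast}S)$ that commutes with $\partial$ and $\bar\partial$, respects the Hodge bigrading, is compatible with the real involution $\sigma$ (here one uses that $S$ is $F_{\infty}$-invariant so that $f^{\ast}S$ is too), and restricts on smooth forms to the ordinary pull-back, so that the square with $c\colon\mathcal{D}^{\ast}_{\log}\to\mathcal{D}^{\ast}_{\D,\as}(\,\cdot\,,S)$ commutes. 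The wave-front-set estimate $\WF(f_{\CC}^{\ast}T)\subset f^{\ast}(\WF(T))$ of H\"ormander is precisely what forces the hypothesis $N_f\cap S=\emptyset$ and produces the target locus $f^{\ast}S$; compatibility with $d$ follows because $f_{\CC}^{\ast}$ on currents is defined as the transpose of integration along the fibers composed with the graph embedding and these commute with the de Rham differential. Granting this, one assembles the Deligne complex map $f^{\ast}\colon\mathcal{D}^{\ast}_{\D,\as}(Y,S,\ast)\to\mathcal{D}^{\ast}_{\D,\as}(X,f^{\ast}S,\ast)$, observes it fits into the commutative diagram required by the general contravariant-functoriality recipe, and concludes $f^{\ast}((\mathcal{Z},g),c)=(f^{\ast}(\mathcal{Z},g),f_{\CC}^{\ast}c)$ is a well-defined pull-back, compatible with $\amap$ because it is so for $\mathcal{D}_{\log}$ and $f_{\CC}^{\ast}$ extends the $\mathcal{D}_{\log}$-pull-back.
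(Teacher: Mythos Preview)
Your proposal is correct and follows the same approach as the paper, which in fact gives no explicit proof of this proposition: the paper simply states it as an immediate consequence of the general contravariant-functoriality recipe set up just before (reducing to a pull-back on $\mathcal{D}_{\log}$-coefficients plus a compatible map $f^{\ast}\colon C'\to C$), together with H\"ormander's pull-back of currents \cite[Theorem 8.2.4]{Hormander:MR1065993} for the wave-front-set input. One minor slip: in part (iii) you write that $f^{\ast}S=T^{\ast}_0 X_{\CC}$ when $S=T^{\ast}_0 Y_{\CC}$ and $f_{\CC}$ is a submersion, but in general $f^{\ast}S$ is only a (proper) closed conical subset of $T^{\ast}_0 X_{\CC}$; this does not affect the conclusion, since one simply composes with the inclusion $\mathcal{D}_{\D,\as}(X,f^{\ast}S)\hookrightarrow\mathcal{D}_{\D,\as}(X)$.
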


Similarly the multiplicative properties of
$\cha^{\ast}_{\QQ}(\mathcal{X},\mathcal{D}_{\log})$ can be transferred to
other coefficients.  
Let $C$,  $C'$ and $C''$ be
$\mathcal{D}_{\log}(X)$-complexes such that there is a commutative
diagram of morphisms of complexes
\begin{displaymath}
  \xymatrix{
  \mathcal{D}_{\log}(X)\otimes \mathcal{D}_{\log}(X) \ar[r]^-{\bullet} \ar[d] & \mathcal{D}_{\log} (X)\ar[d] \\
  C\otimes C' \ar[r]_-{\bullet} & C''.
}
\end{displaymath}
Then we define a product
$$\cha^p(\mathcal{X},C)\times \cha^q(\mathcal{X},C')
\to \cha^{p+q}_{\QQ}(\mathcal{X},C'')$$
by
\begin{equation}\label{product}
((\mathcal{Z},g),c)\cdot((\mathcal{Z}',g'),c')=
((\mathcal{Z},g)\cdot(\mathcal{Z}',g'),c\bullet\cmap(\omega(g'))+
\cmap(\omega(g))\bullet c'+\dd_{C}c\bullet c').
\end{equation}
As a consequence we obtain the following result.
\begin{proposition} \label{prop:2}
  Let $\mathcal{X}$ be a regular arithmetic variety.
  \begin{enumerate}
  \item $\cha^{\ast}_{\QQ}(\mathcal{X},\mathcal{D}_{\as}(X))$ is an
    associative commutative graded ring.
  \item $\cha^{\ast}_{\QQ}(\mathcal{X},\mathcal{D}_{\D,\as}(X))$ is a
    module over
    $\cha^{\ast}_{\QQ}(\mathcal{X},\mathcal{D}_{\as}(X))$.
  \item Let $S,S'$ be closed conic subsets of $T^{\ast}_{0}X_{\CC}$ that are
    invariant under $F_{\infty}$. Then
    $\cha^{\ast}_{\QQ}(\mathcal{X},\mathcal{D}_{\D,\as}(X,S))$ is a
    module over $\cha^{\ast}_{\QQ}(\mathcal{X},\mathcal{D}_{\as}(X))$.
    Moreover, if $S\cap (-S')=\emptyset$, there is a graded bilinear map
    \begin{multline*}
      \cha^p(\mathcal{X},\mathcal{D}_{\D,\as}(X,S))\times
      \cha^q(\mathcal{X},\mathcal{D}_{\D,\as}(X,S')) \longrightarrow  \\
      \cha^{p+q}_{\QQ}(\mathcal{X},\mathcal{D}_{\D,\as}(X,S\cup S'\cup (S+S'))).
    \end{multline*}
  \item The product is compatible with the pull-back of Proposition
    \ref{prop:3}. 
  \end{enumerate}
\end{proposition}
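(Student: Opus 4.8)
The plan is to reduce every assertion of Proposition \ref{prop:2} to the already-established ring and module structure on $\cha^{\ast}_{\QQ}(\mathcal{X},\mathcal{D}_{\log})$ (equivalently $\cha^{\ast}_{\QQ}(\mathcal{X},\mathcal{D}_{\log}(X))$) together with the compatibility of the various Deligne complexes of forms and currents under wedge product. The key observation is the formula \eqref{product}: once one knows that the pairing $\bullet$ on the relevant $\mathcal{D}_{\log}(X)$-complexes fits into the commutative square displayed just before Proposition \ref{prop:2}, the product on the arithmetic Chow groups with those coefficients is \emph{defined}, and the ring or module axioms follow by transport of structure. So the real content is: (a) check that wedge product of smooth forms makes $\mathcal{D}^{\ast}_{\as}(X,\ast)$ a commutative differential graded algebra receiving $\mathcal{D}^{\ast}_{\log}(X,\ast)$ as a dg-subalgebra; (b) check that the product form $\times$ current is well-defined on $\mathcal{D}^{\ast}_{\D,\as}(X,\ast)$ and on $\mathcal{D}^{\ast}_{\D,\as}(X,S,\ast)$, turning each into a module over $\mathcal{D}^{\ast}_{\as}(X,\ast)$; (c) for two currents with wave front sets $S,S'$ satisfying $S\cap(-S')=\emptyset$, invoke H\"ormander's theorem on the product of distributions to see that the wedge product lands in $D^{\ast}(X_{\CC},S\cup S'\cup(S+S'))$, which gives the graded bilinear map of part (iii).

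First I would record that the Deligne complex construction $\mathcal{D}^{\ast}(\underline{\ },\ast)$ of \cite[Definition 5.10]{BurgosKramerKuehn:cacg} is functorial for morphisms of Dolbeault complexes and sends a Dolbeault algebra to a dg-algebra (with the usual Deligne product); applying this to $E^{\ast}(X_{\CC})$ with its wedge product, and taking $\sigma$-invariants, yields the commutative dg-algebra structure on $\mathcal{D}^{\ast}_{\as}(X,\ast)$, and the map $\cmap$ from $\mathcal{D}_{\log}$ is a morphism of such. Feeding this into the displayed commutative square with $C=C'=C''=\mathcal{D}_{\as}(X)$ produces, via \eqref{product}, the product on $\cha^{\ast}_{\QQ}(\mathcal{X},\mathcal{D}_{\as}(X))$; associativity, commutativity and the unit are then inherited from $\cha^{\ast}_{\QQ}(\mathcal{X},\mathcal{D}_{\log})$ — one checks on representatives that the ambiguity in \eqref{product} is absorbed by the equivalence relation $(a(g),0)\sim(0,c(g))$, exactly as in the $\mathcal{D}_{\log}$ case treated in \cite{BurgosKramerKuehn:cacg}. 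This gives (i). For (ii) and the first part of (iii), the same machinery applies with $C=\mathcal{D}_{\as}(X)$, $C'=C''=\mathcal{D}_{\D,\as}(X)$ (resp.\ $\mathcal{D}_{\D,\as}(X,S)$): a smooth form can be multiplied by an arbitrary current, this is compatible with $\overline\partial$ and with the Deligne differential, and the wave front set of a smooth form being empty, the product of a form with a current of wave front set in $S$ again has wave front set in $S$; module associativity is inherited.

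The heart of the matter, and the step I expect to be the main obstacle, is the last part of (iii): the bilinear map on two copies of $\mathcal{D}_{\D,\as}$ with distinct wave front conditions. Here one cannot multiply arbitrary currents, and one must invoke \cite[Theorem 8.2.10]{Hormander:MR1065993}: if $u\in D'(X_{\CC},S)$ and $v\in D'(X_{\CC},S')$ with $S\cap(-S')=\emptyset$ (in each fiber $T^{\ast}_{x}X_{\CC}$), then the product $u\wedge v$ is well-defined and $\WF(u\wedge v)\subset S\cup S'\cup(S+S')$, where $S+S'$ denotes the fiberwise sum over the common base points. One then has to promote this pointwise statement to the level of Dolbeault complexes of currents, check that the product commutes with $\partial$, $\overline\partial$ and hence passes to the Deligne complexes, verify $F_{\infty}$-invariance of the resulting wave front condition, and finally feed the pairing $\mathcal{D}_{\D,\as}(X,S)\otimes\mathcal{D}_{\D,\as}(X,S')\to\mathcal{D}_{\D,\as}(X,S\cup S'\cup(S+S'))$ into the displayed commutative square to obtain the graded bilinear map via \eqref{product}. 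The fact that $S\cup S'\cup(S+S')$ is again closed and conical is elementary but must be stated. Finally, part (iv) — compatibility with the pull-backs of Proposition \ref{prop:3} — reduces to the corresponding compatibility for $\cha^{\ast}_{\QQ}(\mathcal{X},\mathcal{D}_{\log})$ together with the obvious fact that pull-back of differential forms (and, under the transversality hypothesis $N_{f}\cap S=\emptyset$, of currents) commutes with wedge product; since both the product \eqref{product} and the pull-back $f^{\ast}((\mathcal{Z},g),c)=(f^{\ast}(\mathcal{Z},g),f^{\ast}c)$ are given by the same formulas as in the $\mathcal{D}_{\log}$ setting, the verification is formal once (i)--(iii) are in place.
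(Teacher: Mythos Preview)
Your proposal is correct and follows the same approach as the paper, which in fact gives no separate proof: the proposition is stated immediately after formula \eqref{product} with the phrase ``As a consequence we obtain the following result,'' so the paper treats it as a direct specialization of the general product construction to the complexes of Example~\ref{exm:1}. Your write-up simply unpacks this, supplying the references to H\"ormander's product theorem and the Dolbeault-algebra functoriality that the paper leaves implicit.
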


We now turn our attention towards direct images. The definition of
direct images for general $\mathcal{D}_{\log}$-complexes is quite
intricate, involving the notion of covariant $f$-pseudo-morphisms (see
\cite[Definition 3.71]{BurgosKramerKuehn:cacg}). By contrast, we will
give a another description of the groups
$\cha^{\ast}_{\QQ}(\mathcal{X},\mathcal{D}_{\D,\as}(X))$ for which the
definition of push-forward is much simpler.

By \cite[3.8.2]{Burgos:Gftp} we know that any
$\mathcal{D}_{\log}$-Green form is locally integrable. Therefore there
is a well defined map
\begin{displaymath}
  \varphi\colon \za^{p}(\mathcal{X},\mathcal{D}_{\log})\longrightarrow 
  \Zr^{p}(\mathcal{X})\oplus \widetilde {\mathcal{D}}_{\D,\as}^{2p-1}(X,p)
\end{displaymath}
given by $(Z,\widetilde g)\to (Z,\widetilde{[g]})$ for any
representative $g$ of $\widetilde g$.  The previous map can be extended
to a map
\begin{displaymath}
  \varphi_{\mathcal{D}_{\D,\as}(X)}\colon
  \za^{p}(\mathcal{X},\mathcal{D}_{\D,\as}(X))\longrightarrow  
  \Zr^{p}(\mathcal{X})\oplus \widetilde  {\mathcal{D}}_{\D,\as}^{2p-1}(X,p)
\end{displaymath}
given by $\varphi_{\mathcal{D}_{\D,\as}(X)}((Z,\widetilde g),\widetilde
h)=(Z,\widetilde{[g]}+\widetilde h)$.

The following result is clear.
\begin{lemma}\label{lemm:3} The
  map $\varphi_{\mathcal{D}_{\D,\as}(X)}$ is an isomorphism. 
\end{lemma}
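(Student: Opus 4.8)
The plan is to show that $\varphi_{\mathcal{D}_{\D,\as}(X)}$ is bijective by exhibiting an explicit inverse, using the description of $\za^{p}(\mathcal{X},\mathcal{D}_{\D,\as}(X))$ from Definition \ref{def:1} together with the exact sequence \eqref{eq:3}. First I would unwind the source: by Lemma \ref{lemm:1}\ref{item:2} applied to $C=\mathcal{D}_{\D,\as}(X)$, an element of $\za^{p}(\mathcal{X},\mathcal{D}_{\D,\as}(X))$ is represented by a pair $((Z,\widetilde g),\widetilde h)$ with $(Z,\widetilde g)\in\za^{p}(\mathcal{X},\mathcal{D}_{\log})$ and $\widetilde h\in\widetilde{\mathcal{D}}_{\D,\as}^{2p-1}(X,p)$, subject to the relation $(\amap(g'),0)\sim(0,c(g'))$, i.e. $((Z,\widetilde g+ \widetilde{g'}),\widetilde h)\sim ((Z,\widetilde g),\widetilde h+\widetilde{[g']})$ for $g'\in\widetilde{\mathcal{D}}_{\log}^{2p-1}(X,p)$. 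The target $\Zr^{p}(\mathcal{X})\oplus\widetilde{\mathcal{D}}_{\D,\as}^{2p-1}(X,p)$ has no such relation; it is a genuine direct sum. So the content of the lemma is exactly that the map which forgets the internal splitting between the $\mathcal{D}_{\log}$-Green current $[g]$ and the extra current $h$, recording only their sum, is bijective.

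The key steps are then: (1) \emph{Well-definedness} — the formula $((Z,\widetilde g),\widetilde h)\mapsto(Z,\widetilde{[g]}+\widetilde h)$ is constant on $\sim$-classes since adding $\widetilde{g'}$ to $\widetilde g$ and subtracting $\widetilde{[g']}$ from $\widetilde h$ leaves $\widetilde{[g]}+\widetilde h$ unchanged (this uses local integrability of $\mathcal{D}_{\log}$-Green forms, cited from \cite[3.8.2]{Burgos:Gftp}, so that $[g]$ makes sense as a current and the map $\eta\mapsto[\eta]$ is the one of Example \ref{exm:1}). (2) \emph{Surjectivity} — given $(Z,\widetilde h_{0})$ in the target, choose any $\widetilde g$ with $(Z,\widetilde g)\in\za^{p}(\mathcal{X},\mathcal{D}_{\log})$ (possible by the surjectivity of $\zeta$ in the exact sequence of \cite[Proposition 5.5]{Burgos:CDB} quoted in the proof of Lemma \ref{lemm:1}\ref{item:2}), and then $((Z,\widetilde g),\widetilde h_{0}-\widetilde{[g]})$ is a well-defined element of $\za^{p}(\mathcal{X},\mathcal{D}_{\D,\as}(X))$ — one must check the current $\widetilde h_{0}-\widetilde{[g]}$ lies in the right degree and that $\dd_{\mathcal{D}}$ of the pair lands where required, which is automatic since $[g]$ satisfies the same equations as $g$ — mapping to $(Z,\widetilde h_{0})$. (3) \emph{Injectivity} — if $(Z,\widetilde{[g]}+\widetilde h)=(Z',\widetilde{[g']}+\widetilde h')$ then $Z=Z'$, so $(Z,\widetilde g)$ and $(Z,\widetilde g')$ are two elements of $\za^{p}(\mathcal{X},\mathcal{D}_{\log})$ over the same cycle; by the exact sequence \eqref{eq:3} for $\mathcal{D}_{\log}$ they differ by $\amap(\widetilde g'')$ for some $\widetilde g''\in\widetilde{\mathcal{D}}_{\log}^{2p-1}(X,p)$, i.e. $\widetilde{g'}=\widetilde g+\widetilde{g''}$; feeding this back into $\widetilde{[g]}+\widetilde h=\widetilde{[g']}+\widetilde h'$ gives $\widetilde h'=\widetilde h-\widetilde{[g'']}$, which is precisely the relation $\sim$, so the two representatives are equal in $\za^{p}(\mathcal{X},\mathcal{D}_{\D,\as}(X))$.

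The main obstacle — modest, as the authors signal by calling the result ``clear'' — is purely bookkeeping: one has to be careful that passing to classes modulo $\Img\dd$ (the tildes) is compatible on both sides, i.e. that $\dd_{\mathcal{D}}\widetilde g$ is unchanged under the substitutions and that the current $[\dd_{\mathcal{D}} g]$ equals $\dd_{\mathcal{D}}[g]$, so that the cohomology-class condition $\cl(Z)=[(\dd_{\mathcal{D}}\widetilde g,\widetilde g)]$ transports correctly to the current level. Once these compatibilities are in place, injectivity and surjectivity follow formally from the $\mathcal{D}_{\log}$-case as above, and there is nothing further to prove.
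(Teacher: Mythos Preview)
Your argument is correct and is precisely the routine unwinding that the paper suppresses: the authors give no proof at all, stating only that the result ``is clear''. Your three steps (well-definedness under $\sim$, surjectivity via the existence of a $\mathcal{D}_{\log}$-Green form for any cycle, injectivity via the exact sequence for $\za^{p}(\mathcal{X},\mathcal{D}_{\log})$) are exactly what one does to verify such a statement, and there is nothing more to it. One tiny remark: in the surjectivity step there is in fact no condition to check on $\widetilde h_{0}-\widetilde{[g]}$ beyond membership in $\widetilde{\mathcal{D}}_{\D,\as}^{2p-1}(X,p)$, since the second factor of $\za^{p}(\mathcal{X},C)$ carries no closedness or compatibility constraint; so that parenthetical worry can be dropped.
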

  This lemma gives us a more concrete description of the
  group $\za^{p}(\mathcal{X},\mathcal{D}_{\D,\as}(X))$. In fact,
  we will identify it with the group $\Zr^{p}(\mathcal{X})\oplus
  \widetilde {\mathcal{D}}_{\D,\as}^{2p-1}(X,p)$ when necessary. Some
  care has to be taken when
  doing this identification. For instance
  \begin{equation}
    \label{eq:5}
    \omega
    _{\mathcal{D}_{\D,\as}(X)}(\varphi_{\mathcal{D}_{\D,\as}(X)}^{-1}(Z,\widetilde
    g))= \dd_{\mathcal{D}}g+\delta _{Z}.
  \end{equation}
 
Let now $f\colon \mathcal{X}\to \mathcal{Y}$ be a proper morphism of
regular arithmetic varieties of relative dimension $e$. Using the
above identification, we define
\begin{equation}\label{eq:4}
  f_{\ast}\colon
  \za^{p}(\mathcal{X},\mathcal{D}_{\D,\as}(X))\longrightarrow 
  \za^{p-e}(\mathcal{Y},\mathcal{D}_{\D,\as}(Y))
\end{equation}
by $f_{\ast}(Z,\widetilde g)=(f_{\ast}Z, \widetilde{f_{\ast} g})$,
where $g$ is any representative of $\widetilde g$, and  $f_{\ast}(g)$ is
the usual direct image of currents given by
$f_{\ast}(g)(\eta)=g(f^{\ast}\eta)$.  

\begin{proposition} \label{prop:4}
  The map $f_{\ast}$ in \eqref{eq:4} sends the group $
  \rata^{p}(\mathcal{X},\mathcal{D}_{\D,\as}(X))$ to the group
  $\rata^{p-e}(\mathcal{Y},\mathcal{D}_{\D,\as}(Y))$. Therefore
  it induces a map
  \begin{displaymath}
    \cha^{p}(\mathcal{X},\mathcal{D}_{\D,\as}(X))\longrightarrow 
  \cha^{p-e}(\mathcal{Y},\mathcal{D}_{\D,\as}(Y)).
  \end{displaymath}
\end{proposition}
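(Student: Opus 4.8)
The plan is to reduce the statement to the corresponding compatibility for the usual Gillet–Soulé arithmetic Chow groups, which is classical, plus a bookkeeping argument for the current-valued Green data. First I would recall that, under the identification of Lemma \ref{lemm:3}, an element of $\rata^{p}(\mathcal{X},\mathcal{D}_{\D,\as}(X))$ is a $\ZZ$-linear combination of terms $\diva(f')=(\dv(f'),[\mathfrak{g}(f')])$ with $W\subset\mathcal{X}$ irreducible of codimension $p-1$ and $f'\in K(W)^{\times}$, where now the Green current is the \emph{locally integrable current} $[\mathfrak{g}(f')]$ attached to the $\mathcal{D}_{\log}$-Green form (using \cite[3.8.2]{Burgos:Gftp} as in the discussion preceding the proposition). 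So it suffices to show that $f_{\ast}\diva(f')$ lies in $\rata^{p-e}(\mathcal{Y},\mathcal{D}_{\D,\as}(Y))$ for each such generator.

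Next I would split into the two standard cases governing the behavior of divisors under a proper map $f\colon\mathcal{X}\to\mathcal{Y}$ of relative dimension $e$. Let $W'=\overline{f(W)}$. \emph{Case (i):} $\dim W'<\dim W$, i.e. the map $W\to W'$ is not generically finite. Here $f_{\ast}\dv(f')=0$ at the level of cycles (by the very definition of proper push-forward of algebraic cycles), and on the analytic side the projection formula for currents gives $f_{\ast}[\mathfrak{g}(f')]$ a current supported on $\overline{f(\supp\dv f')}$ which, by a fiber-dimension count, is again a current of the form $\dd_{\mathcal D}$ of something, in fact it is the pushforward of a $\cpd$-exact form and one checks it represents $0$ in $\widetilde{\mathcal{D}}_{\D,\as}^{2(p-e)-1}(Y,p-e)$; hence $f_{\ast}\diva(f')=0$. \emph{Case (ii):} $W\to W'$ is generically finite of degree $d$; then $f_{\ast}\dv(f')=d\cdot\dv(\mathrm{Nm}(f'))$ by the classical norm compatibility of divisors (see Fulton, or Gillet–Soulé), where $\mathrm{Nm}\colon K(W)^{\times}\to K(W')^{\times}$ is the norm of the finite extension $K(W)/K(W')$. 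The point is then that the Green current transforms the same way: $f_{\ast}[\mathfrak{g}(f')]$ and $d\cdot[\mathfrak{g}(\mathrm{Nm}(f'))]$ differ by $\dd_{\mathcal D}$ of a locally integrable current, so that $f_{\ast}\diva(f')= d\cdot\diva(\mathrm{Nm}(f'))$ in $\za^{p-e}(\mathcal{Y},\mathcal{D}_{\D,\as}(Y))$, which is by definition in $\rata^{p-e}$.

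For the Green-current identity in Case (ii), I would argue as follows. The form $\mathfrak g(f')$ before passing to currents satisfies, by construction and by the functoriality of the classes $[f']\in H^{2p-1}_{\mathcal D}(\,\cdot\,,\RR(p))$, the relation $f_{\ast}\mathfrak g(f')\sim d\cdot\mathfrak g(\mathrm{Nm}(f'))$ modulo $\Img\dd_{\mathcal D}$ already in the $\mathcal{D}_{\log}$-setting on the locus where everything is smooth; this is exactly the content of the fact (used implicitly above in the proof of Lemma \ref{lemm:1}, via \cite[Proposition 5.5]{Burgos:CDB} and \cite[Theorem 7.3]{Burgos:CDB}) that Gillet–Soulé push-forward is well defined on $\cha^{\ast}(\mathcal{X},\mathcal{D}_{\log})$ for morphisms smooth on the generic fiber. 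Since pushing forward currents commutes with $\dd_{\mathcal D}$ (it is defined by $f_{\ast}(g)(\eta)=g(f^{\ast}\eta)$, and $\dd_{\mathcal D}$ is self-adjoint up to sign under this pairing) and since passing from a $\mathcal{D}_{\log}$-Green form to its associated current commutes with $\dd_{\mathcal D}$ by local integrability, the identity survives after $[\,\cdot\,]$ and after $f_{\ast}$. Finally, one has to check that the ambient terms land in the correct complex: $f_{\ast}$ of a current with empty wave front set has wave front set contained in the normal directions $N_{f}$, but on $\mathcal{D}_{\D,\as}(Y)=\mathcal{D}_{\D,\as}(Y,T^{\ast}_{0}Y_{\CC})$ there is no wave-front constraint, so this is automatic; and $f_{\ast}$ of a \emph{locally integrable} current is locally integrable because $f$ is proper, so \eqref{eq:5} and the description of $\za^{p-e}(\mathcal{Y},\mathcal{D}_{\D,\as}(Y))$ via Lemma \ref{lemm:3} apply verbatim.

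The main obstacle I anticipate is not any single hard estimate but the careful matching of conventions: one must verify that the current-level push-forward $f_{\ast}$ defined by $g\mapsto(\eta\mapsto g(f^{\ast}\eta))$, restricted to the image of $\mathcal{D}_{\log}$-Green forms, literally reproduces the Gillet–Soulé push-forward on $\za^{p}(\mathcal{X},\mathcal{D}_{\log})$ after the identification of Lemma \ref{lemm:3} — i.e. that the two \emph{a priori} different constructions of push-forward (the intricate pseudo-morphism one of \cite[Definition 3.71]{BurgosKramerKuehn:cacg} and the naive currents one) agree on the relevant subgroup. Once that compatibility is pinned down, the passage to $\rata$ is the formal consequence sketched above, and the proof is essentially the classical argument for Gillet–Soulé read through the isomorphism $\varphi_{\mathcal{D}_{\D,\as}(X)}$.
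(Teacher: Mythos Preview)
The paper does not actually prove this proposition: it is stated and the exposition moves directly to conditions (H1) and (H2). The result is taken as known, and the Remark following Proposition~\ref{prop:1} explains why: the groups $\cha^{p}(\mathcal{X},\mathcal{D}_{\D,\as}(X))$ agree up to normalization with the arithmetic Chow groups $\cha^{p}_{D}(\mathcal{X})$ of Kawaguchi--Moriwaki \cite{KawaguchiMoriwaki:isfav}, where proper push-forward for arbitrary proper morphisms is constructed along the classical Gillet--Soul\'e lines \cite{GilletSoule:ait}. So your overall plan --- reduce to generators $\diva(f')$ and split on whether $f|_{W}$ is generically finite --- is precisely the argument underlying the omitted proof.

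There is, however, an error in your Case~(ii): the correct identity is $f_{\ast}\dv(f')=\dv(\mathrm{Nm}(f'))$ \emph{without} the factor~$d$ (Fulton, \emph{Intersection Theory}, Prop.~1.4; the field norm already encodes the degree). The Green-current side matches this: writing $[\mathfrak{g}(f')]$, up to normalization and after resolution of $W$, as the current of integration of $-\log|f'|^{2}$ along $W$, over a generic $y\in W'$ with preimages $x_{1},\dots,x_{d}$ the push-forward produces $-\log\bigl|\prod_{i}f'(x_{i})\bigr|^{2}=-\log|\mathrm{Nm}(f')(y)|^{2}$, so $f_{\ast}[\mathfrak{g}(f')]\equiv[\mathfrak{g}(\mathrm{Nm}(f'))]$ modulo $\Img\dd_{\mathcal D}$, again with no~$d$. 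Your Case~(i) is also too brief: from $\dd_{\mathcal D}[\mathfrak{g}(f')]+\delta_{\dv(f')}=0$ and $f_{\ast}\delta_{\dv(f')}=0$ one gets that $f_{\ast}[\mathfrak{g}(f')]$ is \emph{closed}, but exactness in $\widetilde{\mathcal D}_{\D,\as}$ still needs an argument (an explicit primitive, or passage through a resolution $\widetilde W\to W$ and the factorization of $\widetilde W\to Y$ through its lower-dimensional image), and ``fiber-dimension count'' undersells the actual work in \cite{GilletSoule:ait}. Your closing worry about reconciling the naive current push-forward with the pseudo-morphism framework of \cite{BurgosKramerKuehn:cacg} is reasonable in general, but the paper bypasses that framework entirely here by working directly with the identification of Lemma~\ref{lemm:3}, so for this particular proposition it is not an issue.
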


In order to transfer this push-forward to other coefficients we
introduce two extra properties for a $\mathcal{D}_{\log}(X)$-complex $C$.
\begin{description}
\item{(H1)} There is a commutative diagram of injective morphisms of
  complexes 
\begin{displaymath}
  \xymatrix{
    \mathcal{D}_{\log} ^{\ast}(X,\ast)\ar[r]^{\cmap}\ar[dr]&
    C^{\ast}(\ast)\ar[d]^{\cmap'}   \\
  & \mathcal{D}_{\D,\as} ^{\ast}(X,\ast).
}
\end{displaymath}
Since $\cmap'$ is injective we will usually identify $C$ with its image by
$\cmap'$.
\item{(H2)} The map $\cmap'$ induces isomorphisms
  \begin{align*}
    H^{n}(C^{\ast}(p))\cong H^{n}(\mathcal{D}_{\D,\as}^{\ast}(X,p))
  \end{align*}
for all $p\ge 0$ and $n=2p-1,2p$.
\end{description}

The conditions (H1) and (H2) have two consequences. First
if $\eta \in D_{\D,\as}^{2p-1}(X,p)$ is a current such that
$\dd_{\mathcal{D}}\eta\in C^{2p}(p)$, there exist
$a\in\mathcal{D}_{\D,\as}^{2p-2}(X,p)$ such that 
$$\dd_{\mathcal{D}}a+\eta\in C^{2p-1}(p).$$
Second, the induced map $\widetilde C^{2p-1}(p)\to \widetilde
{\mathcal{D}}^{2p-1}(X,p)$ is injective.  

Let $C$ be a  $\mathcal{D}_{\log}(X)$-complex satisfying (H1).
Consider the diagram
\begin{displaymath}
  \xymatrix{
  \cha^p(\mathcal{X},C) \ar[ddr]_{i}\ar[dr]_{j}\ar^{\omega _{C}}[drr] &&\\
  & A \ar[r] \ar[d] & \Zr C^{2p}(p)\ar[d] \\
  & \cha^p(\mathcal{X},\mathcal{D}_{\D,\as}(X)) \ar[r]_-{\omega} & \Zr
  \mathcal{D}_{\D,\as}^{2p}(p) 
}
\end{displaymath}
where $A$ is defined by the cartesian square, $i$ is induced by (H1)
and $j$ is induced by $i$ and $\omega _{C}$.
\begin{lemma} \label{lemm:2}
  If $C$ also satisfies (H2) then $j$ is an isomorphism.
\end{lemma}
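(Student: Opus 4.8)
The plan is to exhibit an explicit inverse to $j$, using the concrete description of the arithmetic Chow groups coming from Definition \ref{def:1} and the identification of $\cha^p(\mathcal{X},\mathcal{D}_{\D,\as}(X))$ provided by Lemma \ref{lemm:3}. First I would describe the target group $A$ concretely: by the cartesian square, an element of $A$ is a pair $(\alpha,\beta)$ with $\alpha\in\cha^p(\mathcal{X},\mathcal{D}_{\D,\as}(X))$ and $\beta\in\Zr C^{2p}(p)$ such that the image of $\beta$ in $\Zr\mathcal{D}_{\D,\as}^{2p}(p)$ equals $\omega(\alpha)$. Since $\omega_C$ factors through $A$ via $j$, and since the composition of $j$ with $A\to\cha^p(\mathcal{X},\mathcal{D}_{\D,\as}(X))$ is the natural map $i$ induced by (H1), injectivity of $j$ will follow once I know that the only ambiguity in lifting $\alpha$ back to $\cha^p(\mathcal{X},C)$ while prescribing $\omega_C=\beta$ is killed. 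Concretely, both injectivity and surjectivity reduce to statements about the complexes $C^\ast(p)$ and $\mathcal{D}_{\D,\as}^\ast(X,p)$ in degrees $2p-1$ and $2p$, which is exactly where hypothesis (H2) is available.

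For surjectivity, given $(\alpha,\beta)\in A$, I would use Lemma \ref{lemm:3} to write $\alpha=\varphi_{\mathcal{D}_{\D,\as}(X)}^{-1}(Z,\widetilde g)$ for a cycle $Z$ and a class $\widetilde g$ of currents, so that by \eqref{eq:5} one has $\omega(\alpha)=\dd_{\mathcal{D}}g+\delta_Z$. The compatibility condition in $A$ says $\beta\in C^{2p}(p)$ represents the same class, i.e. $\dd_{\mathcal{D}}g+\delta_Z=\beta$ after applying $\cmap'$. Now the cycle part of $\beta$, namely $\delta_Z$, need not a priori lie in $C$, but $\delta_Z=\cl(Z)$-type data is handled by the $\mathcal{D}_{\log}$-part: recall every $\mathcal{D}_{\log}$-Green form is locally integrable (this is what makes $\varphi$ well defined), so the genuine problem is to adjust $g$ within its class so that its image lands in $C^{2p-1}(p)$. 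This is precisely the first consequence of (H1) and (H2) recorded just before the statement: if $\eta\in D_{\D,\as}^{2p-1}(X,p)$ with $\dd_{\mathcal{D}}\eta\in C^{2p}(p)$, then there is $a$ with $\dd_{\mathcal{D}}a+\eta\in C^{2p-1}(p)$. Applying this with $\eta=g$ (after subtracting a $\mathcal{D}_{\log}$-representative of $\cl(Z)$) produces a representative $g'=g+\dd_{\mathcal{D}}a$ whose image lies in $C$, hence an arithmetic cycle $((Z,\widetilde{g_0}),\widetilde c)\in\za^p(\mathcal{X},C)$ mapping to $\alpha$ under $i$. Finally I adjust the $C$-component $\widetilde c$ by an element of $\Zr C^{2p}(p)$-worth of freedom to pin $\omega_C$ to $\beta$ exactly; this uses that $\dd_C$ on $C^{2p-1}(p)$ and the map to $\mathcal{D}_{\D,\as}$ are controlled, again via (H2). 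One then checks the class of this cycle in $\cha^p(\mathcal{X},C)$ is independent of the choices modulo $\rata^p(\mathcal{X},C)$.

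For injectivity, suppose $x\in\cha^p(\mathcal{X},C)$ maps to $0$ under $j$, i.e. $i(x)=0$ in $\cha^p(\mathcal{X},\mathcal{D}_{\D,\as}(X))$ and $\omega_C(x)=0$ in $\Zr C^{2p}(p)$. Writing $x=[((Z,\widetilde g),\widetilde c)]$, the vanishing of $i(x)$ means there is an element $\diva(f)$ of $\rata^p(\mathcal{X},\mathcal{D}_{\log})$ and an exact current realizing $(Z,\widetilde{[g]}+\widetilde c)$ as a boundary in the $\mathcal{D}_{\D,\as}$-theory; that is, $Z=\dv(f)$ and the Green-current class differs from $\mathfrak{g}(f)$ by $\amap$ of something of the form $\dd_{\mathcal{D}}(\text{current})$. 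Combined with $\omega_C(x)=0$, the second consequence of (H1)+(H2) — injectivity of $\widetilde C^{2p-1}(p)\to\widetilde{\mathcal{D}}^{2p-1}(X,p)$ — and the exact sequence \eqref{eq:3} (or \eqref{eq:1}) let me conclude that the $C$-data is already a boundary inside $\widetilde C^{2p-1}(p)$, hence $x\in\rata^p(\mathcal{X},C)$, i.e. $x=0$. Here I would invoke Lemma \ref{lemm:1}\ref{item:1} to phrase everything as a quotient by $\rata^p(\mathcal{X},C)$ and the exact sequences of Lemma \ref{lemm:1}\ref{item:3} to track the classes in $\CH^{p,p-1}(\mathcal{X})$ through the regulator $\rho_C$, which by (H2) behaves the same for $C$ and for $\mathcal{D}_{\D,\as}$.

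The step I expect to be the main obstacle is the surjectivity argument, specifically the passage from a current $g$ representing a class in the $\mathcal{D}_{\D,\as}$-theory to one whose image actually lies in the subcomplex $C$ \emph{while simultaneously} matching the prescribed datum $\beta\in\Zr C^{2p}(p)$ on the nose rather than merely up to coboundary. Both constraints pull in slightly different directions — one is about the class, the other about a chosen representative of $\dd_{\mathcal{D}}$ — and reconciling them is exactly what forces the full strength of (H2) in the two relevant degrees, together with the observation (H1)+(H2) above that closed currents in $C^{2p}(p)$ with prescribed cohomology class can be hit by elements of $C^{2p-1}(p)$ modulo $\mathcal{D}_{\D,\as}$-coboundaries. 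Once that reconciliation is done carefully, independence of all choices modulo $\rata^p(\mathcal{X},C)$ is a routine diagram chase using Lemma \ref{lemm:1}.
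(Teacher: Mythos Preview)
Your approach is essentially the same as the paper's, and the key ideas are all there: use the injectivity of $\widetilde C^{2p-1}(p)\to\widetilde{\mathcal{D}}_{\D,\as}^{2p-1}(X,p)$ for injectivity of $j$, and for surjectivity pick a $\mathcal{D}_{\log}$-Green form $g'$ for $Z$, apply the consequence of (H2) to $g-[g']$ to land in $C^{2p-1}(p)$, and build the preimage from $(Z,\widetilde{g'})$ together with this $C$-class.

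Two places where you are working harder than necessary. First, injectivity: once you know $\widetilde C^{2p-1}(p)\hookrightarrow\widetilde{\mathcal{D}}_{\D,\as}^{2p-1}(X,p)$, the exact sequence \eqref{eq:1} for $C$ and for $\mathcal{D}_{\D,\as}(X)$ (with identity maps on $\CH^{p,p-1}$ and $\CH^p$) immediately gives that $i$ itself is injective, and since $i$ factors through $j$ you are done; there is no need to unwind $\rata^p$ or track $\omega_C(x)=0$ separately. Second, the obstacle you flag in surjectivity---matching $\beta$ on the nose rather than up to coboundary---does not actually arise. If you set $g''=g-[g']+\dd_{\mathcal{D}}a\in C^{2p-1}(p)$ and take $x'=((Z,\widetilde{g'}),\widetilde{g''})$, then a direct computation gives
\[
\omega_C(x')=\cmap(\dd_{\mathcal{D}}g')+\dd_C g''=(\dd_{\mathcal{D}}[g']+\delta_Z)+(\dd_{\mathcal{D}}g-\dd_{\mathcal{D}}[g'])=\dd_{\mathcal{D}}g+\delta_Z=\beta
\]
exactly, with no further adjustment of the $C$-component and no need to verify independence of choices.
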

\begin{proof}
By the injectivity of $\widetilde C^{2p-1}(p)\to \widetilde
{\mathcal{D}}^{2p-1}(X,p)$ and Lemma \ref{lemm:1}~\ref{item:3}, the map $i$ is
injective. Hence  the map $j$ is injective and we only need to prove
that $j$ is surjective.

Let $x=((Z,\widetilde g),\eta)\in A$. This means that $(Z,\widetilde
g)\in\cha^p(\mathcal{X},\mathcal{D}_{\D,\as}(X))$, $\eta\in \Zr
C^{2p}(p)$ and $\omega _{\mathcal{D}_{\D,\as}(X)}(Z,\widetilde
g)=\dd_{\mathcal{D}}g+\delta _{Z}=\eta$. Let $g'\in 
\mathcal{D}^{2p-1}_{\log}(X\setminus |Z|,p)$ be a Green form for
$Z$. Then $g-[g']\in \mathcal{D}^{2p-1}_{\D,\as}(X,p)$ satisfies 
$\dd_{\mathcal{D}}(g-[g'])\in C^{2p}(p)$. By (H2) there is $a\in
\mathcal{D}^{2p-2}_{\D,\as}(X,p)$ such that
$g'':=g-[g']+\dd_{\mathcal{D}}a\in C^{2p-1}(p)$.
Consider the element $x'=((Z,\widetilde g'),\widetilde g'')\in
\cha^{p}(\mathcal{X},C)$. To see that $j(x')=x$ we have to check that 
$\omega _{C}(x')=\eta$ and $i(x')=(Z,\widetilde g)$. We compute
\begin{gather*}
  \omega _{C}(x')=\cmap(\dd_{\mathcal{D}}g')+\dd_{C}g''=
  \dd_{\mathcal{D}}[g']+\delta _{Z}+\dd_{\mathcal{D}} g-\dd_{\mathcal{D}}[g']+
  \dd_{\mathcal{D}}\dd_{\mathcal{D}}a=\eta,\\
  i(x')=(Z,([g']+g-[g']+\dd_{\mathcal{D}}a)^{\sim})=(Z,\widetilde g),
\end{gather*}
concluding the proof of the lemma.
\end{proof}
We can rephrase the lemma as follows.
\begin{theorem}\label{pf=pb} 
Let $C$ be a $\mathcal{D}_{\log}(X)$-complex that satisfies (H1) and
(H2). Then the map $\varphi$ can be extended to an injective map 
\begin{displaymath}
  \varphi_{C}\colon \za^{p}(\mathcal{X},C)\longrightarrow 
  \Zr^{p}(\mathcal{X})\oplus \widetilde D_{\D,\as}^{2p-1}(X,p).
\end{displaymath}
Moreover
\begin{equation}
  \label{eq:6}
  \Img(\varphi_{C})=\left\{(Z,\widetilde g)\in
    \Zr^{p}(\mathcal{X})\oplus \widetilde
    {\mathcal{D}}_{\D,\as}^{2p-1}(X,p) \mid 
    \dd_{\mathcal{D}}g+\delta _{Z}\in
    C^{2p}(p)\right\}.   
\end{equation}
\end{theorem}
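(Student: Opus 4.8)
The plan is to deduce Theorem \ref{pf=pb} directly from Lemma \ref{lemm:2} together with Lemma \ref{lemm:3}, by chasing through the cartesian square that defines the group $A$. Recall that Lemma \ref{lemm:3} identifies $\za^{p}(\mathcal{X},\mathcal{D}_{\D,\as}(X))$ with $\Zr^{p}(\mathcal{X})\oplus\widetilde{\mathcal{D}}_{\D,\as}^{2p-1}(X,p)$ via $\varphi_{\mathcal{D}_{\D,\as}(X)}$, and under this identification \eqref{eq:5} records that $\omega_{\mathcal{D}_{\D,\as}(X)}$ sends $(Z,\widetilde g)$ to $\dd_{\mathcal{D}}g+\delta_Z$. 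First I would assemble the obvious candidate for $\varphi_C$: given $((Z,\widetilde g),\widetilde c)\in\za^{p}(\mathcal{X},C)$, send it to $(Z,\widetilde{[g]}+\widetilde c)$, i.e. compose the natural map $\za^{p}(\mathcal{X},C)\to\za^{p}(\mathcal{X},\mathcal{D}_{\D,\as}(X))$ coming from (H1) with $\varphi_{\mathcal{D}_{\D,\as}(X)}$. Injectivity of this composite is exactly the injectivity of the map $i$ (equivalently $j$) established in the proof of Lemma \ref{lemm:2}, which rested on the injectivity of $\widetilde C^{2p-1}(p)\to\widetilde{\mathcal{D}}^{2p-1}(X,p)$ — one of the two stated consequences of (H1) and (H2).

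Next I would identify the image. By construction an element $(Z,\widetilde g)$ in the target lies in the image of $\varphi_C$ precisely when the pair $\bigl((Z,\widetilde g),\,\cmap(\dd_{\mathcal D}g_0)+\dd_C c\bigr)$ — for suitable representatives — defines a point of the fibre product $A$ whose image in $\Zr C^{2p}(p)$ is well defined; unravelling the cartesian square, this amounts to asking that $\omega_{\mathcal{D}_{\D,\as}(X)}$ of the corresponding cycle, namely $\dd_{\mathcal D}g+\delta_Z$ by \eqref{eq:5}, actually lands in the subspace $C^{2p}(p)\subset\mathcal{D}_{\D,\as}^{2p}(X,p)$ (here using the identification of $C$ with its image under $\cmap'$ from (H1)). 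The surjectivity half of the proof of Lemma \ref{lemm:2} is exactly the statement that every such $(Z,\widetilde g)$ with $\dd_{\mathcal D}g+\delta_Z\in C^{2p}(p)$ is hit: one subtracts a $\mathcal{D}_{\log}$-Green form $[g']$ for $Z$, uses the first consequence of (H1)+(H2) to correct $g-[g']$ by a $\dd_{\mathcal D}$-exact current into $C^{2p-1}(p)$, and thereby produces an explicit preimage $((Z,\widetilde{g'}),\widetilde{g''})$. Conversely, for any element of $\za^{p}(\mathcal{X},C)$ the image under $\omega_C$ lies in $\Zr C^{2p}(p)$ by definition of $\omega_C$, which shows the displayed subset in \eqref{eq:6} contains $\Img(\varphi_C)$; combined with the surjectivity statement this pins down the image exactly.

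I expect the only real subtlety — and it is minor, being essentially bookkeeping — to be keeping the two identifications straight: first the identification of $C$ with $\cmap'(C)\subset\mathcal{D}_{\D,\as}(X)$ furnished by (H1), and second the identification $\varphi_{\mathcal{D}_{\D,\as}(X)}$ of $\za^{p}(\mathcal{X},\mathcal{D}_{\D,\as}(X))$ with a direct sum, so that the condition ``$\dd_{\mathcal D}\widetilde g\in C^{2p}(p)$'' appearing implicitly in $\za^{p}(\mathcal{X},C)$ matches the condition ``$\dd_{\mathcal D}g+\delta_Z\in C^{2p}(p)$'' in \eqref{eq:6} via \eqref{eq:5}. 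Once those are aligned, the theorem is just a restatement of Lemma \ref{lemm:2} at the level of cycles rather than Chow groups, so I would present the argument as: define $\varphi_C$, quote the injectivity already proved, and then transcribe the surjectivity computation of Lemma \ref{lemm:2} verbatim at the cycle level to obtain \eqref{eq:6}. No new analytic input (no wave front set estimates, no integrability beyond what (H1) already packages) is needed.
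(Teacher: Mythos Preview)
Your proposal is correct and matches the paper's approach: the paper itself offers no separate proof, introducing the theorem with the words ``We can rephrase the lemma as follows.'' The one thing to watch is that Lemma \ref{lemm:2} is stated at the Chow-group level while Theorem \ref{pf=pb} is at the cycle level $\za^p$, so when you ``quote the injectivity already proved'' you should invoke the short exact sequence of Lemma \ref{lemm:1}\ref{item:2} rather than \ref{item:3}; but since you correctly identify the real input (injectivity of $\widetilde C^{2p-1}(p)\to\widetilde{\mathcal{D}}_{\D,\as}^{2p-1}(X,p)$) and explicitly say you will transcribe the argument to the cycle level, this is cosmetic.
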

In view of this theorem, if $C$ satisfies (H1) and (H2), we will
identify $\za^{p}(\mathcal{X},C)$ with the right hand side of equation
\eqref{eq:6}.

\begin{proposition}\label{prop:1} The
  $\mathcal{D}_{\log}(X)$-complexes   $\mathcal{D}_{\as}(X)$,
  $\mathcal{D}_{\D,\as}(X,S)$ and $\mathcal{D}_{\D,\as}(X)$ satisfy
  (H1) and (H2). Therefore we can identify
\begin{align*}
  \za^{p}(\mathcal{X},\mathcal{D}_{\as}(X))&\cong
  \left\{(Z,\widetilde g) \in \Zr^{p}\oplus \widetilde
    {\mathcal{D}}_{\D,\as}^{2p-1}(X,p)
    \mid
    \dd_{\mathcal{D}}g+\delta _{Z}\in
    \mathcal{D}^{2p}_{\as}(X,p)\right\}\\
  \za^{p}(\mathcal{X},\mathcal{D}_{\D,\as}(X,S))&\cong
  \left\{(Z,\widetilde g) \in \Zr^{p}\oplus \widetilde
    {\mathcal{D}}_{\D,\as}^{2p-1}(X,p)
    \mid 
    \dd_{\mathcal{D}}g+\delta _{Z}\in
    \mathcal{D}^{2p}_{\D,\as}(X,S,p)\right\}\\
  \za^{p}(\mathcal{X},\mathcal{D}_{\D,\as}(X))&\cong
  \Zr^{p}(\mathcal{X})\oplus \widetilde
    {\mathcal{D}}_{\D,\as}^{2p-1}(X,p).
\end{align*}
\end{proposition}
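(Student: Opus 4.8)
The plan is to verify that each of the three complexes $\mathcal{D}_{\as}(X)$, $\mathcal{D}_{\D,\as}(X,S)$ and $\mathcal{D}_{\D,\as}(X)$ satisfies (H1) and (H2), and then invoke Theorem \ref{pf=pb} to obtain the three displayed descriptions of the groups $\za^{p}$. By the final remark of Example \ref{exm:1}, the complexes $\mathcal{D}_{\as}(X)=\mathcal{D}_{\D,\as}(X,\emptyset)$ and $\mathcal{D}_{\D,\as}(X)=\mathcal{D}_{\D,\as}(X,T^{\ast}_{0}X)$ are special cases of $\mathcal{D}_{\D,\as}(X,S)$, so it suffices to treat $C=\mathcal{D}_{\D,\as}(X,S)$ for an arbitrary closed conical $F_{\infty}$-invariant $S\subset T^{\ast}_{0}X_{\CC}$.

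For (H1): the structural morphism $\cmap\colon \mathcal{D}_{\log}^{\ast}(X,\ast)\to \mathcal{D}_{\D,\as}^{\ast}(X,S,\ast)$ exists because every $\mathcal{D}_{\log}$-Green form is locally integrable (this is \cite[3.8.2]{Burgos:Gftp}, already used above to define $\varphi$), so it defines a current, and the wave front set of a locally integrable form with logarithmic singularities along a normal crossings divisor is computed by the conormal directions of that divisor; one checks this is contained in any $S$ once one uses the smooth model — more precisely, since these Green forms are actually $C^\infty$ away from the divisor and the singular contribution is of the specific log type, the associated currents lie in the smallest complex $\mathcal{D}_{\as}(X)=\mathcal{D}_{\D,\as}(X,\emptyset)$, hence in $\mathcal{D}_{\D,\as}(X,S)$ for every $S$. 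The inclusion $\cmap'\colon \mathcal{D}_{\D,\as}^{\ast}(X,S,\ast)\hookrightarrow \mathcal{D}_{\D,\as}^{\ast}(X,\ast)$ is injective by construction (it is the inclusion of currents with restricted wave front set into all currents), and the triangle commutes by definition of all the maps involved; injectivity of $\cmap$ follows since $\cmap'\circ\cmap$ is injective. So (H1) holds.

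For (H2): we must show $\cmap'$ induces isomorphisms $H^{n}(\mathcal{D}_{\D,\as}^{\ast}(X,S,p))\cong H^{n}(\mathcal{D}_{\D,\as}^{\ast}(X,p))$ for $n=2p-1,2p$. Both sides are Deligne complexes of quasi-isomorphic Dolbeault complexes: the inclusion $D^{\ast}(X_{\CC},S)\hookrightarrow D^{\ast}(X_{\CC})$ of currents with wave front set in $S$ into all currents is a quasi-isomorphism, and moreover $E^{\ast}(X_{\CC})\hookrightarrow D^{\ast}(X_{\CC},S)$ is a quasi-isomorphism as well, all three computing the de Rham cohomology of $X_{\CC}$ — this is recalled in \cite[Section 4]{BurgosLitcanu:SingularBC} (the key point being that the standard regularization of currents by convolution with mollifiers produces smooth forms and homotopies that do not enlarge the wave front set). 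Passing to the associated Deligne complexes $\mathcal{D}^{\ast}(\,\cdot\,,p)$ preserves quasi-isomorphisms, and the $\sigma$-invariants functor is exact on real vector spaces (it is a direct summand, the $+1$-eigenspace of an involution), so $\cmap'$ is a quasi-isomorphism in all degrees; in particular (H2) holds in the two degrees required.

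With (H1) and (H2) established, Theorem \ref{pf=pb} applies verbatim and yields $\za^{p}(\mathcal{X},C)\cong\{(Z,\widetilde g)\in \Zr^{p}(\mathcal{X})\oplus \widetilde{\mathcal{D}}_{\D,\as}^{2p-1}(X,p)\mid \dd_{\mathcal{D}}g+\delta_{Z}\in C^{2p}(p)\}$ for each of the three choices, which is exactly the list of displayed identifications; in the last case the condition $\dd_{\mathcal{D}}g+\delta_{Z}\in \mathcal{D}_{\D,\as}^{2p}(X,p)$ is automatic, recovering Lemma \ref{lemm:3}. The only genuine technical point — and thus the expected main obstacle — is the verification that the quasi-isomorphisms between the currration complexes genuinely respect the wave front set constraint uniformly in $S$ (i.e. that mollification does not move wave front sets), but this is precisely the content of the wave front set formalism imported from \cite[Section 4]{BurgosLitcanu:SingularBC} and \cite{Hormander:MR1065993}, so it can be cited rather than reproved.
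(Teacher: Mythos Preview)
Your proof is correct and follows essentially the same route as the paper: the key input is the Poincar\'e $\bar\partial$-Lemma for currents with fixed wave front set from \cite[Theorem 4.5]{BurgosLitcanu:SingularBC}, which is exactly what the paper cites, and your unification of the three cases via $\mathcal{D}_{\as}(X)=\mathcal{D}_{\D,\as}(X,\emptyset)$ and $\mathcal{D}_{\D,\as}(X)=\mathcal{D}_{\D,\as}(X,T^{\ast}_{0}X)$ is a mild organizational improvement over the paper's separate treatment. One small point: your (H1) discussion is more elaborate than necessary, since the elements of $\mathcal{D}_{\log}(X)$ are smooth on $X$ itself (the logarithmic singularities live on a compactification), so the associated currents have empty wave front set without any analysis of conormal directions; but your conclusion is correct regardless.
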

\begin{proof}
  The result for $\mathcal{D}_{\as}(X)$ follows from the Poincar\'e
  $\ov \partial$-Lemma for currents
  \cite[Pag. 382]{GriffithsHarris:pag}. The result for
  $\mathcal{D}_{\D,\as}(X,S)$ follows from the Poincar\'e
  $\ov \partial$-Lemma for currents with fixed wave front set
  \cite[Theorem 4.5]{BurgosLitcanu:SingularBC}. The statement for
  $\mathcal{D}_{\D,\as}(X)$ is Lemma \ref{lemm:3}.
\end{proof}

\begin{remark}
  The previous proposition gives us a more concrete description of the
  groups $\za^{p}(\mathcal{X},C)$ for $C=\mathcal{D}_{\as}(X)$,
  $\mathcal{D}_{\D,\as}(X,S)$ and $\mathcal{D}_{\D,\as}(X)$ and
  also a more concrete description of the groups
  $\cha^{p}(\mathcal{X},C)$. In particular it easily implies that the
  groups $\cha^{p}(\mathcal{X},\mathcal{D}_{\as}(X))$, $p\ge 0$, agree
  (up to a 
  normalization factor) with the arithmetic Chow groups of Gillet-Soul\'e
  $\cha^{p}(\mathcal{X})$ and that the groups
  $\cha^{p}(\mathcal{X},\mathcal{D}_{\D,\as}(X))$, $p\ge 0$, agree
  (again up to a 
  normalization factor) with the
  arithmetic Chow groups of Kawaguchi-Moriwaki $\cha^{p}_{D}(\mathcal{X})$, introduced in
  \cite{KawaguchiMoriwaki:isfav}.   
\end{remark}

We want to transfer the push-forward of Proposition \ref{prop:4} to
complexes satisfying (H1) and (H2). 
Let again $f\colon \mathcal{X}\to \mathcal{Y}$ be a proper morphism of
regular arithmetic varieties of relative dimension $e$. Let 
$C$ be a $\mathcal{D}_{\log}(X)$-complex and $C'$ a
$\mathcal{D}_{\log}(Y)$-complex, both satisfying (H1) and (H2). We assume  
there exists a map of complexes $f_{\ast}\colon C^{\ast}(\ast)\to
C'^{\ast-2e}(\ast-e)$ that makes the 
following diagram commutative: 
\begin{equation}\label{eq:18}
  \xymatrix{
  C^{\ast}(\ast)\ar[r]^-{f_{\ast}} \ar[d] & C'^{\ast-2e}(\ast-e) \ar[d] \\
  \mathcal{D}_{\D}^{\ast}(X,\ast)\ar[r]_-{f_{\ast}} & \mathcal{D}_{\D}^{\ast-2e}(Y,\ast-e).
} 
\end{equation}
Then, for $p\ge 0$, we define maps
\begin{displaymath}
  f_{\ast}\colon \za^{p}(\mathcal{X},C)\to \za^{p-e}(\mathcal{Y},C') 
\end{displaymath}
that, using the identification of Theorem \ref{pf=pb} are given by
\begin{equation}\label{dir_im_gen}
f_{\ast}(Z,\widetilde g)=(f_{\ast}Z,\widetilde {f_{\ast}(g)}).
\end{equation}
By Proposition \ref{prop:4}, these maps send $
\rata^{p}(\mathcal{X},C)$ to 
$\rata^{p-e}(\mathcal{Y},C')$. Therefore they
induce morphisms
\begin{displaymath}
  f_{\ast}\colon \cha^{p}(\mathcal{X},C)\longrightarrow 
  \cha^{p-e}(\mathcal{Y},C').
\end{displaymath}

Before stating concrete examples of this covariant functoriality
we need some notation.  Let
 $f_{\CC}\colon X_{\CC}\to Y_{\CC}$ denote 
the induced proper map of complex manifolds.
 If $S\subset
  T^{\ast}_{0}X_{\CC}$ is a closed conical subset invariant under
  $F_{\infty}$, then we write
  \begin{displaymath}
    f_{\ast}S=\{(f(x),\xi)\in T^{\ast}_{0}Y_{\CC}\mid (x,\dd
    f^{t}\xi)\in S\}\cup N_{f}. 
  \end{displaymath}
  It is a closed conical subset of $T^{\ast}_{0}Y_{\CC}$ invariant under
  $F_{\infty}$.

  \begin{proposition}\label{prop:8}
    Let $f\colon \mathcal{X}\to \mathcal{Y}$, $g\colon \mathcal{Y}\to\mathcal{Z}$ be proper morphism of
regular arithmetic varieties of relative dimension $e$ and $e'$, and $S\subset
  T^{\ast}_{0}X_{\CC}$ a closed conical subset invariant under
  $F_{\infty}$. Then there are maps
  \begin{displaymath}
    f_{\ast}\colon \cha^{p}(\mathcal{X},\mathcal{D}_{\D,\as}(X,S))
    \to \cha^{p-e}(\mathcal{Y},\mathcal{D}_{\D,\as}(Y,f_{\ast}S)),
  \end{displaymath}
  and similarly
  \begin{displaymath}
  	g_{\ast}\colon \cha^{p'}(\mathcal{Y},\mathcal{D}_{\D,\as}(Y,f_{\ast}S))
    \to \cha^{p'-e'}(\mathcal{Z},\mathcal{D}_{\D,\as}(Z,g_{\ast}f_{\ast}S)).
  \end{displaymath}
  Furthermore, the relation $(g\circ f)_{\ast}=g_{\ast}\circ f_{\ast}$ is satisfied.
  \end{proposition}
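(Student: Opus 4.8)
The plan is to verify, first, that the complexes involved are legitimate $\mathcal{D}_{\log}$-complexes satisfying (H1) and (H2) and that $f_{\ast}$ on currents respects the relevant wave front conditions, so that the general machinery of \eqref{dir_im_gen} applies; and, second, to reduce the functoriality statement to the already-established functoriality of push-forward of currents. For the first point, Proposition \ref{prop:1} tells us that $\mathcal{D}_{\D,\as}(X,S)$, $\mathcal{D}_{\D,\as}(Y,f_{\ast}S)$ and $\mathcal{D}_{\D,\as}(Z,g_{\ast}f_{\ast}S)$ all satisfy (H1) and (H2). What remains is to check that the direct image of currents $f_{\ast}\colon \mathcal{D}_{\D}^{\ast}(X,\ast)\to \mathcal{D}_{\D}^{\ast-2e}(Y,\ast-e)$ restricts to a map $f_{\ast}\colon \mathcal{D}_{\D,\as}^{\ast}(X,S,\ast)\to \mathcal{D}_{\D,\as}^{\ast-2e}(Y,f_{\ast}S,\ast-e)$, i.e. that if $T$ is a current on $X_{\CC}$ with $\WF(T)\subset S$ then $\WF(f_{\CC\ast}T)\subset f_{\ast}S$. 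This is exactly H\"ormander's bound on the wave front set of a pushforward along a proper map \cite[Theorem 8.2.13]{Hormander:MR1065993} (see also \cite[Section 4]{BurgosLitcanu:SingularBC}): the wave front of $f_{\CC\ast}T$ is contained in $\{(f(x),\xi)\mid (x,\dd f^{t}\xi)\in \WF(T)\}\cup N_{f}$, which is precisely the definition of $f_{\ast}S$ given just before the statement. Thus diagram \eqref{eq:18} commutes with $C=\mathcal{D}_{\D,\as}(X,S)$ and $C'=\mathcal{D}_{\D,\as}(Y,f_{\ast}S)$, and the discussion preceding the proposition yields the map $f_{\ast}$ on $\cha^{p}$; the same argument applied to $g$ and the conical set $f_{\ast}S$ yields $g_{\ast}$.

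For the composition law, I would argue at the level of arithmetic cycles using the explicit formula \eqref{dir_im_gen}. Under the identification of Theorem \ref{pf=pb}, an element of $\za^{p}(\mathcal{X},\mathcal{D}_{\D,\as}(X,S))$ is a pair $(Z,\widetilde g)$, and $f_{\ast}(Z,\widetilde g)=(f_{\ast}Z,\widetilde{f_{\ast}g})$, where $f_{\ast}Z$ is the algebraic pushforward of cycles and $f_{\ast}g$ the pushforward of currents. Since pushforward of algebraic cycles is functorial for proper maps, $(g\circ f)_{\ast}Z=g_{\ast}f_{\ast}Z$; and since pushforward of currents is functorial, $(g\circ f)_{\ast}g=g_{\ast}f_{\ast}g$ — both following directly from $(g\circ f)^{\ast}=f^{\ast}\circ g^{\ast}$ on test forms and on cycles. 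Passing to classes modulo $\sim$ and then modulo $\rata$ (using Proposition \ref{prop:4}, which guarantees that each $f_{\ast}$ and $g_{\ast}$ preserves rational equivalence), we obtain $(g\circ f)_{\ast}=g_{\ast}\circ f_{\ast}$ on $\cha^{\ast}$.

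The one point requiring genuine care — and the place I expect to spend most effort — is the bookkeeping of wave front sets under composition, namely checking that $g_{\ast}(f_{\ast}S)=(g\circ f)_{\ast}S$, or at least that $(g\circ f)_{\ast}S\subseteq g_{\ast}(f_{\ast}S)$, so that the target group of $(g\circ f)_{\ast}$ is contained in (or equal to) the target group of $g_{\ast}\circ f_{\ast}$ and the identity of maps makes sense as stated. Unwinding the definitions, $g_{\ast}(f_{\ast}S)=\{(g(y),\zeta)\mid (y,\dd g^{t}\zeta)\in f_{\ast}S\}\cup N_{g}$, and $f_{\ast}S=\{(f(x),\xi)\mid (x,\dd f^{t}\xi)\in S\}\cup N_{f}$; substituting and using the chain rule $\dd(g\circ f)^{t}=\dd f^{t}\circ \dd g^{t}$ one checks that the "main" parts match and that $N_{g\circ f}\subseteq N_{g}\cup g_{\ast}(N_{f})$, which suffices. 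This is the combinatorial heart of the proof; once it is in place, the rest is the formal argument sketched above. I would isolate this wave-front computation, then conclude by combining it with the cycle- and current-level functoriality and Proposition \ref{prop:4}.
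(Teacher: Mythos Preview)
The paper does not give an explicit proof of this proposition; it is stated as a direct application of the general push-forward construction \eqref{dir_im_gen} and Proposition~\ref{prop:4}, together with the standard H\"ormander bound on the wave front set of a push-forward. Your proposal supplies precisely these details and is the argument the paper implicitly relies on; your observation that in general one only has $(g\circ f)_{\ast}S\subseteq g_{\ast}(f_{\ast}S)$ (so the equality of maps is to be read after the natural change-of-coefficients map into $\cha^{\ast}(\mathcal{Z},\mathcal{D}_{\D,\as}(Z,g_{\ast}f_{\ast}S))$) is correct and worth making explicit.
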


As a particular case of the above proposition we obtain the following
cases.
\begin{corollary}\label{cor:1}
     Let $f\colon \mathcal{X}\to \mathcal{Y}$ be a proper morphism of
regular arithmetic varieties of relative dimension $e$.
\begin{enumerate}
\item  There are maps
  \begin{displaymath}
    f_{\ast}\colon \cha^{p}(\mathcal{X},\mathcal{D}_{\as}(X))
    \to \cha^{p-e}(\mathcal{Y},\mathcal{D}_{\D,\as}(Y,N_{f})).
  \end{displaymath}
\item If $N_{f}=\emptyset$, then there are maps 
  \begin{displaymath}
    f_{\ast}\colon \cha^{p}(\mathcal{X},\mathcal{D}_{\as}(X))
    \to \cha^{p-e}(\mathcal{Y},\mathcal{D}_{\as}(Y)).
  \end{displaymath}
\end{enumerate}
\end{corollary}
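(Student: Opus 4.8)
The plan is to deduce Corollary~\ref{cor:1} directly from Proposition~\ref{prop:8} by specializing the closed conical set $S$. For part~(i), I would apply Proposition~\ref{prop:8} with $S=\emptyset$. By Proposition~\ref{prop:1}, the complex $\mathcal{D}_{\as}(X)=\mathcal{D}_{\D,\as}(X,\emptyset)$ satisfies (H1) and (H2), so the push-forward map of Proposition~\ref{prop:8} is defined and goes
\begin{displaymath}
  f_{\ast}\colon \cha^{p}(\mathcal{X},\mathcal{D}_{\D,\as}(X,\emptyset))
  \to \cha^{p-e}(\mathcal{Y},\mathcal{D}_{\D,\as}(Y,f_{\ast}\emptyset)).
\end{displaymath}
It then only remains to unwind the definition of $f_{\ast}S$ in this case: by the formula $f_{\ast}S=\{(f(x),\xi)\mid (x,\dd f^{t}\xi)\in S\}\cup N_{f}$, taking $S=\emptyset$ gives $f_{\ast}\emptyset=N_{f}$, since the first set is empty. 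Hence the target is $\cha^{p-e}(\mathcal{Y},\mathcal{D}_{\D,\as}(Y,N_{f}))$, which is exactly the claim of~(i). I should also double-check that $\mathcal{D}_{\D,\as}(Y,N_{f})$ is among the complexes covered by Proposition~\ref{prop:1}: indeed $N_{f}$ is a closed conical $F_{\infty}$-invariant subset of $T^{\ast}_{0}Y_{\CC}$ (noted right after its definition), so $\mathcal{D}_{\D,\as}(Y,N_{f})$ is of the form $\mathcal{D}_{\D,\as}(Y,S)$ and satisfies (H1) and (H2).

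For part~(ii), under the hypothesis $N_{f}=\emptyset$ the computation above gives $f_{\ast}\emptyset=\emptyset$, so the same application of Proposition~\ref{prop:8} with $S=\emptyset$ yields
\begin{displaymath}
  f_{\ast}\colon \cha^{p}(\mathcal{X},\mathcal{D}_{\D,\as}(X,\emptyset))
  \to \cha^{p-e}(\mathcal{Y},\mathcal{D}_{\D,\as}(Y,\emptyset)),
\end{displaymath}
and using again the identifications $\mathcal{D}_{\D,\as}(X,\emptyset)=\mathcal{D}_{\as}(X)$ and $\mathcal{D}_{\D,\as}(Y,\emptyset)=\mathcal{D}_{\as}(Y)$ from Example~\ref{exm:1} and Proposition~\ref{prop:1}, this is precisely the asserted map. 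Part~(ii) is therefore a formal consequence of part~(i) together with the observation that $N_{f}=\emptyset$ forces the receiving complex to collapse to $\mathcal{D}_{\as}(Y)$.

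There is no real obstacle here: the corollary is a bookkeeping consequence of the general push-forward already established in Proposition~\ref{prop:8}, and the only thing to be careful about is the set-theoretic computation of $f_{\ast}\emptyset$ and the matching of notation ($\mathcal{D}_{\as}$ versus $\mathcal{D}_{\D,\as}(\,\cdot\,,\emptyset)$). One point that deserves an explicit sentence is that the map of complexes $f_{\ast}\colon \mathcal{D}_{\as}^{\ast}(X,\ast)\to \mathcal{D}_{\D,\as}^{\ast-2e}(Y,N_{f},\ast-e)$ required to fit in diagram~\eqref{eq:18} is simply the restriction of the direct image of currents $f_{\ast}$, together with the standard fact (from H\"ormander's theory of wave front sets under proper pushforward, cf. \cite[Theorem 8.2.12]{Hormander:MR1065993}, see also \cite[Section 4]{BurgosLitcanu:SingularBC}) that the pushforward of a smooth form has wave front set contained in $N_{f}$; this is exactly what makes the instance of Proposition~\ref{prop:8} applicable and what is implicitly packaged into its proof. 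So the proof reduces to writing ``apply Proposition~\ref{prop:8} with $S=\emptyset$ and compute $f_{\ast}\emptyset$''.
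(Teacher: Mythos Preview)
Your proposal is correct and follows exactly the approach of the paper: the corollary is stated there as ``a particular case of the above proposition'' (Proposition~\ref{prop:8}), obtained by taking $S=\emptyset$ and using the identification $\mathcal{D}_{\as}(X)=\mathcal{D}_{\D,\as}(X,\emptyset)$ from Example~\ref{exm:1}, together with the computation $f_{\ast}\emptyset=N_{f}$. Your added remarks on the $F_{\infty}$-invariance of $N_{f}$ and on the underlying map of complexes are correct and make the specialization explicit, but the paper simply invokes the proposition without spelling these out.
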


The functoriality and the multiplicative structures satisfy the
following compatibility properties.

\begin{theorem}\label{thm:1}
  Let $f\colon \mathcal{X}\to \mathcal{Y}$ be a morphism of regular arithmetic
  varieties. Let $N_{f}$ be the set of normal directions of $f_{\CC}$ and
  $S,\,S'\subset
  T^{\ast}_{0}Y_{\CC}$  closed conical subsets invariant under
  $F_{\infty}$. Then
  \begin{displaymath}
    f^{\ast}(S\cup
        S'\cup 
        (S+S'))=f^{\ast}(S)\cup f^{\ast}(
        S')\cup 
        (f^{\ast}(S)+f^{\ast}(S'))).
  \end{displaymath}
  If $N_{f}\cap(S\cup S'\cup S+S')=\emptyset$ and $S\cap
  (-S')=\emptyset$ then $f^{\ast}(S)\cap
  (-f^{\ast}(S'))=\emptyset$. In
  this case, if $\alpha \in
  \cha^p(\mathcal{Y},\mathcal{D}_{\D,\as}(Y,S))$ and $\beta \in
  \cha^q(\mathcal{Y},\mathcal{D}_{\D,\as}(Y,S'))$ then
  \begin{displaymath}
    f^{\ast}(\alpha \cdot \beta )=f^{\ast}(\alpha )\cdot
    f^{\ast}(\beta )\in
    \cha^{p+q}_{\QQ}(\mathcal{X},\mathcal{D}_{\D,\as}(X,f^{\ast}(S\cup
    S'\cup 
    (S+S')))). 
  \end{displaymath}
\end{theorem}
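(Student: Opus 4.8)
The plan is to prove Theorem~\ref{thm:1} in two stages: first the purely set-theoretic identities involving wave front sets and normal directions, and then the compatibility of the product with pull-back using the explicit formulas established earlier in the section.

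\textbf{Step 1: the set-theoretic identities.} First I would verify $f^{\ast}(S\cup S'\cup(S+S'))=f^{\ast}(S)\cup f^{\ast}(S')\cup(f^{\ast}(S)+f^{\ast}(S'))$. Unwinding the definition $f^{\ast}T=\{(x,\dd f_{\CC}^{t}\xi)\mid(f(x),\xi)\in T\}$, the point is that $\xi\mapsto\dd f_{\CC}^{t}\xi$ is linear on each cotangent fibre, so it takes unions to unions and sums to sums; the only subtlety is that $f^{\ast}$ involves both pulling back the base point and applying $\dd f^{t}$ fibrewise, and one checks that over a fixed $x\in X_{\CC}$ with $y=f(x)$, the fibre of $f^{\ast}T$ is exactly $\dd f_{x}^{t}(T_{y})$ (intersected with the nonzero covectors), whence everything is compatible with $\cup$ and $+$. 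Then, for the disjointness claim: suppose $N_{f}\cap(S\cup S'\cup(S+S'))=\emptyset$ and $S\cap(-S')=\emptyset$. If $(x,\eta)\in f^{\ast}(S)\cap(-f^{\ast}(S'))$, then $\eta=\dd f_{x}^{t}\xi=-\dd f_{x}^{t}\xi'$ with $(f(x),\xi)\in S$, $(f(x),\xi')\in S'$. Since $N_{f}\cap(S+S')=\emptyset$ we have $\xi+\xi'\neq 0$ as an element of $T_{0}^{\ast}Y_{\CC}$ \emph{unless} $\dd f_{x}^{t}(\xi+\xi')\neq 0$ forces... — more carefully: $\dd f_{x}^{t}(\xi+\xi')=\eta+(-\eta)=0$, so $(f(x),\xi+\xi')\in N_{f}$ if $\xi+\xi'\neq 0$; but $(f(x),\xi+\xi')\in S+S'$, contradicting $N_{f}\cap(S+S')=\emptyset$; hence $\xi+\xi'=0$, i.e. $(f(x),\xi)\in S\cap(-S')$, contradicting $S\cap(-S')=\emptyset$. (If $\xi=0$ or $\xi'=0$ one argues directly against $N_{f}\cap S=\emptyset$ or $N_{f}\cap S'=\emptyset$, noting $\eta\neq 0$ since we are in $T_{0}^{\ast}X_{\CC}$.) This also shows $N_{f}\cap(S\cup S'\cup(S+S'))=\emptyset$ guarantees the three pull-backs $f^{\ast}(S)$, $f^{\ast}(S')$, $f^{\ast}(S\cup S'\cup(S+S'))$ are all defined (transversality), and that $f^{\ast}(S)\cup f^{\ast}(S')\cup(f^{\ast}(S)+f^{\ast}(S'))\subset f^{\ast}(S\cup S'\cup(S+S'))$, so the product on the right-hand side lands in the asserted group by Proposition~\ref{prop:2}(iii).

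\textbf{Step 2: compatibility of the product with pull-back.} Now take $\alpha\in\cha^{p}(\mathcal{Y},\mathcal{D}_{\D,\as}(Y,S))$ and $\beta\in\cha^{q}(\mathcal{Y},\mathcal{D}_{\D,\as}(Y,S'))$, represented respectively by $((\mathcal{Z},g),c)$ and $((\mathcal{Z}',g'),c')$ in the notation preceding \eqref{product}. I would compare $f^{\ast}(\alpha\cdot\beta)$ and $f^{\ast}(\alpha)\cdot f^{\ast}(\beta)$ using formula \eqref{product} together with the definitions of $f^{\ast}$ on $\cha^{\ast}(\cdot,\mathcal{D}_{\log})$ and on the complexes of currents from Proposition~\ref{prop:3}. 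On the level of the $\cha^{\ast}(\cdot,\mathcal{D}_{\log})$-component, $f^{\ast}((\mathcal{Z},g)\cdot(\mathcal{Z}',g'))=f^{\ast}(\mathcal{Z},g)\cdot f^{\ast}(\mathcal{Z}',g')$ is exactly the known compatibility of the product on Gillet--Soulé arithmetic Chow groups with pull-back (via Burgos--Kramer--Kühn, transferred to $\mathcal{D}_{\log}$ as recalled in the excerpt). On the current component, the terms are $c\bullet\cmap(\omega(g'))+\cmap(\omega(g))\bullet c'+\dd_{C}c\bullet c'$; applying $f^{\ast}$ and using that $f^{\ast}$ is a morphism of complexes commuting with $\dd_{\mathcal{D}}$, compatible with the product $\bullet$ (since the pull-back of currents is a ring map on the subcomplexes with appropriate wave front sets, by Hörmander's theorem on pull-back of distributions — this is where transversality $N_{f}\cap(\cdot)=\emptyset$ is used), and compatible with $\omega$, one gets termwise equality with the corresponding expression for $f^{\ast}(\alpha)\cdot f^{\ast}(\beta)$. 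One must also check this is independent of the choice of representatives, which follows from the well-definedness already granted for $f^{\ast}$ and for the product.

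\textbf{Main obstacle.} The routine algebra is the matching of the three current terms; the genuine analytic input — and the step I expect to require the most care — is justifying that the pull-back of currents is compatible with the wedge product on the subcomplexes $D^{\ast}(Y_{\CC},S)$, $D^{\ast}(Y_{\CC},S')$ under the hypothesis $N_{f}\cap(S\cup S'\cup(S+S'))=\emptyset$, i.e. that $f_{\CC}^{\ast}(u\wedge v)=f_{\CC}^{\ast}u\wedge f_{\CC}^{\ast}v$ for currents $u,v$ with wave front sets in $S,S'$. This rests on Hörmander's pull-back theorem (transversality to $S$, $S'$ and to $S+S'$ ensures both $f_{\CC}^{\ast}u$, $f_{\CC}^{\ast}v$ and $f_{\CC}^{\ast}(u\wedge v)$ are defined, with $\WF(f_{\CC}^{\ast}u)\subset f^{\ast}S$ etc.), combined with continuity of pull-back in the appropriate Hörmander topology to reduce to smooth forms where the identity is classical. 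Once this is in place, everything else is a bookkeeping exercise with the formulas \eqref{product} and the functorialities of Propositions~\ref{prop:3} and~\ref{prop:4}.
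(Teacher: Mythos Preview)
The paper states Theorem~\ref{thm:1} without proof, treating it as a routine compatibility check; your proposal supplies exactly the natural verification one would carry out, and it is correct in outline and in substance.

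A couple of small points worth tightening. In Step~1, the edge case ``$\xi=0$ or $\xi'=0$'' never arises, since $(f(x),\xi)\in S\subset T_{0}^{\ast}Y_{\CC}$ and likewise for $\xi'$; you can delete that parenthetical. Also, the first displayed identity in the statement tacitly presupposes that all three pull-backs are defined, i.e.\ that $N_{f}$ is disjoint from $S$, $S'$, and $S+S'$ separately; you correctly observe this, and it is worth saying explicitly that this is why the hypothesis $N_{f}\cap(S\cup S'\cup(S+S'))=\emptyset$ is needed even for the set equality to make sense. Finally, in Step~2 the compatibility $f^{\ast}(\omega(g))=\omega(f^{\ast}g)$ for the $\mathcal{D}_{\log}$ piece is part of the contravariant functoriality of $\cha^{\ast}(\cdot,\mathcal{D}_{\log})$ recalled before Proposition~\ref{prop:3}, so you are on solid ground there; the genuine analytic content is, as you identify, the multiplicativity of H\"ormander pull-back of currents under the stated wave-front hypotheses, reduced by continuity to the smooth case.
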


\begin{theorem}\label{thm:2}
    Let $f\colon \mathcal{X}\to \mathcal{Y}$ be a proper morphism of
    regular arithmetic 
  varieties of relative dimension $e$. Let $N_{f}$ be the set of
  normal directions of $f_{\CC}$,
  $S\subset
  T^{\ast}_{0}X_{\CC}$ and   $S'\subset
  T^{\ast}_{0}Y_{\CC}$  closed conical subsets invariant under
  $F_{\infty}$. Then
  \begin{displaymath}
    f_{\ast}(S\cup f^{\ast}(
        S')\cup 
        (S+f^{\ast}(S')))\subset f_{\ast}(S)\cup
        S'\cup 
        (f_{\ast}(S)+S')).
  \end{displaymath}
  If $f_{\ast}(S)\cap (-S')=\emptyset$ then $N_{f}\cap
  S'=\emptyset$ and $S\cap (-f^{\ast}(S'))=\emptyset$. In this case, if
  $\alpha \in 
  \cha^p(\mathcal{X},\mathcal{D}_{\D,\as}(X,S))$ and $\beta \in
      \cha^q(\mathcal{Y},\mathcal{D}_{\D,\as}(Y,S'))$ then
      \begin{displaymath}
        f_{\ast}(\alpha \cdot f^{\ast}(\beta) )=f_{\ast}(\alpha )\cdot
        \beta \in
        \cha^{p+q-e}_{\QQ}(\mathcal{Y},\mathcal{D}_{\D,\as}(Y,f_{\ast}(S)\cup
        S'\cup 
        (f_{\ast}(S)+S'))). 
      \end{displaymath} 
\end{theorem}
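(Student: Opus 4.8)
The plan is to separate the statement into a microlocal part — the inclusion and the emptiness implications among the conical sets $N_{f}$, $f^{\ast}(S')$, $f_{\ast}(S)$ — and an arithmetic part, where the projection formula is deduced from the classical projection formula for Chow groups together with a projection formula for currents with controlled wave front set, by way of the description of $\za^{p}(\mathcal{X},\mathcal{D}_{\D,\as}(X,S))$ given in Theorem \ref{pf=pb}.

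The microlocal part is a routine fibrewise computation with the transpose differential $\dd f_{\CC}^{t}$: writing a covector $\dd f_{\CC,x}^{t}\xi$ contributing to the left-hand side as $\dd f_{\CC,x}^{t}\zeta+\eta$ with $(f(x),\zeta)\in S'$ and $\eta$ either $0$ or in $S$ over $x$, and using $N_{f}\subset f_{\ast}(S)$, gives $f_{\ast}(S\cup f^{\ast}(S')\cup(S+f^{\ast}(S')))\subset f_{\ast}(S)\cup S'\cup(f_{\ast}(S)+S')$. Since $\dd f^{t}\xi=0$ forces $\dd f^{t}(-\xi)=0$, one has $N_{f}=-N_{f}\subset f_{\ast}(S)$, so $f_{\ast}(S)\cap(-S')=\emptyset$ immediately gives $N_{f}\cap S'=\emptyset$ (hence $f^{\ast}(S')$ is defined), and lifting a hypothetical point of $S\cap(-f^{\ast}(S'))$ along $\dd f_{\CC}^{t}$ produces a point of $f_{\ast}(S)\cap(-S')$, so also $S\cap(-f^{\ast}(S'))=\emptyset$. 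With these facts, Propositions \ref{prop:3}, \ref{prop:2} and \ref{prop:8} make all of $f^{\ast}\beta$, $\alpha\cdot f^{\ast}\beta$, $f_{\ast}(\alpha\cdot f^{\ast}\beta)$, $f_{\ast}\alpha$, $f_{\ast}\alpha\cdot\beta$ well defined, and the inclusion above places both sides of the claimed identity in $\cha^{p+q-e}_{\QQ}(\mathcal{Y},\mathcal{D}_{\D,\as}(Y,f_{\ast}(S)\cup S'\cup(f_{\ast}(S)+S')))$.

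For the identity I would argue on representatives. By Proposition \ref{prop:1} and Theorem \ref{pf=pb} write $\alpha$ as the class of $(Z,\widetilde g)$ with $\dd_{\mathcal{D}}g+\delta_{Z}\in\mathcal{D}^{2p}_{\D,\as}(X,S,p)$ and $\beta$ as the class of $(W,\widetilde h)$ with $\dd_{\mathcal{D}}h+\delta_{W}\in\mathcal{D}^{2q}_{\D,\as}(Y,S',q)$, moving $W$ on the generic fibre so that $Z\cdot f^{\ast}W$ and $f_{\ast}Z\cdot W$ are represented by $\ast$-products of Green currents. Then $f^{\ast}\beta$ is represented by $(f^{\ast}W,\widetilde{f^{\ast}h})$ with $\omega(f^{\ast}h)=f^{\ast}\omega(h)$ (Proposition \ref{prop:3}), and unwinding the product \eqref{product} and the direct image \eqref{dir_im_gen} under Theorem \ref{pf=pb} one finds that $\alpha\cdot f^{\ast}\beta$ is represented by $(Z\cdot f^{\ast}W,(g\ast f^{\ast}h)^{\sim})$ with $g\ast f^{\ast}h\equiv g\wedge\delta_{f^{\ast}W}+\omega(g)\wedge f^{\ast}h\pmod{\Img\dd_{\mathcal{D}}}$, and $f_{\ast}\alpha\cdot\beta$ by $(f_{\ast}Z\cdot W,((f_{\ast}g)\ast h)^{\sim})$ with $(f_{\ast}g)\ast h\equiv(f_{\ast}g)\wedge\delta_{W}+\omega(f_{\ast}g)\wedge h$. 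Now $f_{\ast}(Z\cdot f^{\ast}W)=f_{\ast}Z\cdot W$ in $\CH^{p+q-e}_{\QQ}(\mathcal{Y})$ is the classical projection formula, valid rationally for the proper morphism $f$ and already underlying the push-forward on $\cha^{\ast}(\cdot,\mathcal{D}_{\log})$ of \cite{BurgosKramerKuehn:cacg}; and the projection formula for currents $f_{\ast}(\tau\wedge f^{\ast}\rho)=(f_{\ast}\tau)\wedge\rho$, applied to $(\tau,\rho)=(g,\delta_{W})$ and $(\tau,\rho)=(\omega(g),h)$, together with $f_{\ast}\circ\dd_{\mathcal{D}}=\dd_{\mathcal{D}}\circ f_{\ast}$, $f_{\ast}\delta_{Z}=\delta_{f_{\ast}Z}$ (so $f_{\ast}\omega(g)=\omega(f_{\ast}g)$) and $\delta_{f^{\ast}W}=f^{\ast}\delta_{W}$ for the chosen representative, yields $f_{\ast}(g\ast f^{\ast}h)\equiv(f_{\ast}g)\wedge\delta_{W}+\omega(f_{\ast}g)\wedge h\equiv(f_{\ast}g)\ast h\pmod{\Img\dd_{\mathcal{D}}}$. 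The same computations give $\omega(f_{\ast}(\alpha\cdot f^{\ast}\beta))=f_{\ast}(\omega(\alpha)\wedge f^{\ast}\omega(\beta))=\omega(f_{\ast}\alpha\cdot\beta)$; so the difference of the two classes is killed by $\zeta$ and by $\omega$, and by the exact sequence \eqref{eq:2} it lies in the image of the canonical Green current attached to the rational equivalence in the classical projection formula, hence in $\rata^{p+q-e}_{\QQ}$, i.e., it is $0$.

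The hard part is the projection formula for currents $f_{\ast}(\tau\wedge f^{\ast}\rho)=(f_{\ast}\tau)\wedge\rho$ in the regime relevant here, where $\rho$ (namely $h$ and $\delta_{W}$) is not a smooth form but a current with non-empty wave front set: one must know that $f^{\ast}\rho$ exists (transversality $N_{f}\cap S'=\emptyset$), that the products $\tau\wedge f^{\ast}\rho$ and $(f_{\ast}\tau)\wedge\rho$ are defined (the conditions $S\cap(-f^{\ast}(S'))=\emptyset$ and $f_{\ast}(S)\cap(-S')=\emptyset$ from the first step), and that the adjunction identity $\langle f_{\ast}(\tau\wedge f^{\ast}\rho),\phi\rangle=\langle\tau,f^{\ast}(\rho\wedge\phi)\rangle=\langle f_{\ast}\tau,\rho\wedge\phi\rangle$ is legitimate, which rests on the continuity of $f^{\ast}$ and $f_{\ast}$ on currents with prescribed wave front sets from \cite{BurgosLitcanu:SingularBC} and \cite[Ch.~8]{Hormander:MR1065993}. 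A secondary point requiring care is that $f$ need not be flat, so $f^{\ast}W$ and the identity $\delta_{f^{\ast}W}=f^{\ast}\delta_{W}$ only make literal sense after moving $W$ within its rational equivalence class — which forces the $\QQ$-coefficients and entails tracking the canonical Green currents of these moves so that they cancel in $\cha^{\ast}_{\QQ}$.
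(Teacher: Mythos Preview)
The paper states Theorem~\ref{thm:2} without proof, so there is no ``paper's own proof'' to compare against; your outline is the natural one and is essentially correct. The microlocal part is clean: $N_{f}\subset f_{\ast}(S)$ by definition and $N_{f}=-N_{f}$, so $f_{\ast}(S)\cap(-S')=\emptyset$ forces $N_{f}\cap S'=\emptyset$; your lifting argument for $S\cap(-f^{\ast}(S'))=\emptyset$ and the fibrewise verification of the inclusion are fine. For the arithmetic part, working with representatives $(Z,\widetilde g)$ via Theorem~\ref{pf=pb} and reducing to the classical projection formula on cycles plus the projection formula $f_{\ast}(\tau\wedge f^{\ast}\rho)=(f_{\ast}\tau)\wedge\rho$ for currents with controlled wave front set (as in \cite[Sec.~4]{BurgosLitcanu:SingularBC}) is exactly the right strategy.

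Two points deserve tightening. First, your invocation of the exact sequence~\eqref{eq:2} at the end is misplaced: being killed by $\zeta\oplus\omega$ only puts the difference in the image of $\bmap_{C}$, not in $\rata$. What you actually need is more direct. After moving $W$ so that $f^{\ast}W$ and the relevant intersections are defined on the cycle level, the classical projection formula gives an equality of cycles $f_{\ast}(Z\cdot f^{\ast}W)=f_{\ast}Z\cdot W$ (not merely rational equivalence), and your current computation gives $f_{\ast}(g\ast f^{\ast}h)\equiv(f_{\ast}g)\ast h$ in $\widetilde{\mathcal{D}}_{\D,\as}$; so under the identification of Theorem~\ref{pf=pb} the two arithmetic cycles coincide in $\za^{p+q-e}$ already, before passing to $\cha$. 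The moving of $W$ is where the $\rata$ terms enter, and there you must check that replacing $(W,\widetilde h)$ by $(W',\widetilde{h}+\sum\widetilde{\mathfrak g}(f_{i}))$ is respected on both sides, which follows from $f_{\ast}(g\ast f^{\ast}\mathfrak g(f_{i}))\equiv(f_{\ast}g)\ast\mathfrak g(f_{i})$ by the same current identity. Second, the translation of the abstract product~\eqref{product} into the $\ast$-product formula under the isomorphism $\varphi_{C}$ of Theorem~\ref{pf=pb} is asserted rather than checked; this is a straightforward but necessary computation, since~\eqref{product} is written in the $((Z,g),c)$ presentation whereas the push-forward~\eqref{dir_im_gen} is written in the $(Z,\widetilde g)$ presentation.
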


\section{Arithmetic $K$-theory and derived categories}\label{section:ArKTheory}
\subsection{Arithmetic $K$-theory}
As for arithmetic Chow groups, the arithmetic $K$-groups of
Gillet-Soul\'e can be generalized to include more general
coefficients at the archimedean places. Because the
definition of arithmetic $K$-groups involves hermitian vector bundles
whose metrics have arbitrary singularities at infinity, we are
actually forced to consider $\mathcal{D}_{\as}(X)$-complex
coefficients. The reader is referred to
\cite[Sec. 4.2]{BurgosKramerKuehn:accavb} for the construction of the
groups $\widehat{K}_{0}(\mathcal{X},\mathcal{D}_{\as})$, which provide
the base of our extension (see also Definition \ref{def:2} below). An
arithmetic Chern character allows to compare these generalized
arithmetic $K$-groups and the generalized arithmetic Chow groups. The
arithmetic Chern character is automatically compatible with pull-back
and products whenever defined.

\begin{definition}\label{def:2}
Let $\mathcal{X}$ be an arithmetic variety and $C^{\ast}(\ast)$ a $\mathcal{D}_{\as}(X)$-complex with structure morphism $\cmap:\mathcal{D}_{\as}^{\ast}(X,\ast)\rightarrow C^{\ast}(\ast)$. The arithmetic $K$ group of $\mathcal{X}$ with $C$ coefficients is the abelian group $\widehat{K}_{0}(\mathcal{X},C)$ generated by pairs $(\ov{\mathcal{E}},\eta)$, where $\ov{\mathcal{E}}$ is a smooth hermitian vector bundle on $\mathcal{X}$ and $\eta\in\bigoplus_{p\geq 0}\widetilde{C}^{2p-1}(p)$, modulo the relations
\begin{displaymath}
	(\ov{\mathcal{E}}_{1},\eta_{1})+(\ov{\mathcal{E}}_{2},\eta_{2})=(\ov{\mathcal{E}},\cmap(\widetilde{\ch}(\ov{\varepsilon}))+\eta_{1}+\eta_{2}),
\end{displaymath}
for every exact sequence
\begin{displaymath}
	\ov{\varepsilon}\colon\quad 0\longrightarrow\ov{\mathcal{E}}_{1}\longrightarrow\ov{\mathcal{E}}\longrightarrow\ov{\mathcal{E}}_{2}\longrightarrow 0
\end{displaymath}
with Bott-Chern secondary class $\widetilde{\ch}(\ov{\varepsilon})$.
\end{definition}
An equivalent construction can be given in the same lines as for the generalized arithmetic Chow groups. For this, observe there is a natural morphism
\begin{equation}\label{eq:7}
	\begin{split}
	\widehat{K}_{0}(\mathcal{X},\mathcal{D}_{\as})&\longrightarrow\widehat{K}_{0}(\mathcal{X},C)\\
		[\ov{\mathcal{E}},\eta]&\longmapsto [\ov{\mathcal{E}},\cmap(\eta)]
	\end{split}
\end{equation}
induced by $\cmap:\mathcal{D}_{\as}^{\ast}(X,\ast)\rightarrow C^{\ast}(\ast)$, and also
\begin{equation}\label{eq:8}
	\begin{split}
		\bigoplus_{p\geq 0}\widetilde{C}^{2p-1}(p)&\longrightarrow\widehat{K}_{0}(\mathcal{X},C)\\
		\eta&\longmapsto [0,\eta].
	\end{split}
\end{equation}
One easily sees the maps \eqref{eq:7}--\eqref{eq:8} induce a natural isomorphism of groups
\begin{equation}\label{eq:9}
	\widehat{K}_{0}(\mathcal{X},\mathcal{D}_{\as})\times\bigoplus_{p\geq 0}\widetilde{C}^{2p-1}(p)/\equiv\;
	\overset{\cong}{\longrightarrow}\widehat{K}_{0}(\mathcal{X},C),
\end{equation}
where $\equiv$ is the equivalence relation generated by
\begin{displaymath}
	((\ov{\mathcal{E}},\eta),0)\equiv ((\ov{\mathcal{E}},0),\cmap(\eta)).
\end{displaymath}
Generalized arithmetic $K$-groups for suitable complexes have
pull-backs and products. Let $f:\mathcal{X}\to\mathcal{Y}$ be a morphism
of arithmetic varieties, and suppose given $\mathcal{D}_{\as}(X)$ and
$\mathcal{D}_{\as}(Y)$ complexes $C$ and $C'$ respectively, for which
there is a commutative diagram 
\begin{equation}\label{eq:12}
  \xymatrix{
    \mathcal{D}_{\as}^{\ast}(Y)\ar[d]\ar[r]^{f^{\ast}}		
    &\mathcal{D}_{\as}^{\ast}(X)\ar[d]\\
    C^{\prime\ast}\ar[r]_{f^{\ast}}
    &C^{\ast}.
  }
\end{equation}
By description \eqref{eq:9} and the contravariant functoriality of
$\widehat{K}_{0}(\underline{\ },\mathcal{D}_{\as})$, we see there is
an induced morphism of groups 
\begin{displaymath}
	f^{\ast}:\widehat{K}_{0}(\mathcal{Y},C^{\prime})
        \longrightarrow\widehat{K}_{0}(\mathcal{X},C).
\end{displaymath}
For this kind of functoriality, an analog statement to Proposition
\ref{prop:3} holds, and we leave to the reader the task of stating
it. As for products, let $C$, $C'$ and $C''$ be
$\mathcal{D}_{\as}(X)$-complexes with a product $C\otimes
C'\overset{\bullet}{\rightarrow} C''$ and a commutative diagram
\begin{equation}\label{eq:19}
	\xymatrix{
		\mathcal{D}_{\as}(X)\otimes\mathcal{D}_{\as}(X)\ar[r]^-{\bullet}\ar[d]	&\mathcal{D}_{\as}(X)\ar[d]\\
		C\otimes C'\ar[r]_-{\bullet}	&C''.
	}
\end{equation}
Then there is an induced product at the level of $\widehat{K}_{0}$
\begin{displaymath}
	\widehat{K}_{0}(\mathcal{X},C)\times\widehat{K}_{0}(\mathcal{X},C')\longrightarrow\widehat{K}_{0}(\mathcal{X},C'')
\end{displaymath}
described by the rule
\begin{displaymath}
	[\ov{\mathcal{E}},\eta]\cdot [\ov{\mathcal{E}}',\eta']=
		[\ov{\mathcal{E}}\otimes\ov{\mathcal{E}}',c(\ch(\ov{\mathcal{E}}))\bullet\eta'+c'(\ch(\ov{\mathcal{E}}'))\bullet\eta
		+\dd_{C}\eta\bullet\eta'].
\end{displaymath}
With respect to this laws, the groups $\widehat{K}_{0}$ enjoy of the
analogue properties to Proposition \ref{prop:2}. 

Finally, we discuss on the arithmetic Chern character. If
$\mathcal{X}$ is a regular arithmetic variety, we recall there is an
isomorphism of rings \cite[Thm. 4.5]{BurgosKramerKuehn:accavb}, namely 
\begin{displaymath}
  \chh:\widehat{K}_{0}(\mathcal{X},\mathcal{D}_{\as})_{\QQ}\longrightarrow
  \bigoplus_{p\geq 0}\cha^{p}_{\QQ}(\mathcal{X},\mathcal{D}_{\as}).
\end{displaymath}
If $C$ is a $\mathcal{D}_{\as}(X)$-complex, by the presentations
\eqref{eq:11} of $\cha^{\ast}(\mathcal{X},C)$ and \eqref{eq:9} of
$\widehat{K}_{0}$ it is clear that $\chh$ extends to an  isomorphism
of groups 
\begin{displaymath}
	\chh:\widehat{K}_{0}(\mathcal{X},C)_{\QQ}\longrightarrow\bigoplus_{p\geq 0}\cha^{p}_{\QQ}(\mathcal{X},C).
\end{displaymath}
Suppose now that $C,C',C''$ are $\mathcal{D}_{\as}(X)$-complexes with
a product $C\otimes C'\rightarrow C''$ as above. Then, it is easily
seen that there is a commutative diagram of morphisms of groups
\begin{displaymath}
  \xymatrix{
    \widehat{K}_{0}(\mathcal{X},C)_{\QQ}\times\widehat{K}_{0}(\mathcal{X},C')_{\QQ}
    \ar[r]^{\hspace{1cm}\cdot}\ar[d]_{(\chh,\chh)}
    &\widehat{K}_{0}(\mathcal{X},C'')_{\QQ}\ar[d]^{\chh}\\ 
    \bigoplus_{p}\cha^{p}_{\QQ}(\mathcal{X},C)\times\bigoplus_{p}
    \cha^{p}_{\QQ}(\mathcal{X},C')\ar[r]^{\hspace{1.3cm\cdot}}
    &\bigoplus_{p}\cha^{p}_{\QQ}(\mathcal{X},C''). 
  }
\end{displaymath}
This is in particular true for complexes of currents with controlled wave front set, a result that we next record.
 \begin{proposition}
Let $S, S'$ be closed conical subsets of $T_{0}^{\ast}X_{\CC}$
invariant under the action of complex conjugation, with $S\cap
(-S')=\emptyset$. Define $T=S\cup S'\cup (S+S')$. If $\alpha \in
\widehat{K}_{0}(\mathcal{X},\mathcal{D}_{\D,\as}(X,S))_{\QQ}$ and
$\beta \in
\widehat{K}_{0}(\mathcal{X},\mathcal{D}_{\D,\as}(X,S'))_{\QQ}$, then
\begin{displaymath}
  \chh(\alpha )\cdot \chh(\beta )=\chh(\alpha \cdot \beta )\in
  \bigoplus_{p}\cha^{p}_{\QQ}(\mathcal{X},\mathcal{D}_{\D,\as}(X,T)). 
\end{displaymath}
\end{proposition}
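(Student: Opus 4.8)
The plan is to deduce the identity from the general compatibility of the arithmetic Chern character with products recorded just above, applied to the three $\mathcal{D}_{\as}(X)$-complexes $C=\mathcal{D}_{\D,\as}(X,S)$, $C'=\mathcal{D}_{\D,\as}(X,S')$ and $C''=\mathcal{D}_{\D,\as}(X,T)$, where $T=S\cup S'\cup(S+S')$. The only point that is not immediate is that these three complexes genuinely fit into a commutative square of the shape \eqref{eq:19} over $\mathcal{D}_{\as}(X)$; equivalently, that the exterior product of currents induces a morphism of complexes $C\otimes C'\to C''$ extending the product on $\mathcal{D}_{\as}(X)$. This is in fact already needed in order for $\alpha\cdot\beta$ to be defined in $\widehat{K}_{0}(\mathcal{X},\mathcal{D}_{\D,\as}(X,T))_{\QQ}$, and it is the \emph{same} product that underlies the module and bilinear structures on the Chow side in Proposition \ref{prop:2}.

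The first step is to invoke H\"ormander's theorem on products of distributions (\cite{Hormander:MR1065993}, see also \cite[Section 4]{BurgosLitcanu:SingularBC}): since $S\cap(-S')=\emptyset$, for currents $u,v$ on $X_{\CC}$ with $\WF(u)\subset S$ and $\WF(v)\subset S'$ the product $u\wedge v$ is well defined, restricts to the ordinary wedge product when one of the factors is smooth, and satisfies $\WF(u\wedge v)\subset S\cup S'\cup(S+S')=T$. The second step is to transport this to the Deligne complexes $\mathcal{D}_{\D,\as}^{\ast}(X,S,\ast)=\mathcal{D}^{\ast}(D^{\ast}(X_{\CC},S),\ast)^{\sigma}$, forming the product exactly as in the construction that yields the multiplicative structures of Proposition \ref{prop:2}: since $\partial$ and $\overline{\partial}$ obey the Leibniz rule on currents wherever the relevant products exist, the resulting pairing is compatible with $\dd_{\mathcal{D}}$, hence is a morphism of complexes, and it is compatible with the product of smooth forms and with the structure inclusions $\cmap'$ because on smooth forms the current product is the ordinary one. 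This furnishes the datum \eqref{eq:19} for the triple $(C,C',C'')$.

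The third step is to apply the commutative square relating the product on the arithmetic $K$-groups $\widehat{K}_{0}(\mathcal{X},-)_{\QQ}$ to the product on the arithmetic Chow groups $\bigoplus_{p}\cha^{p}_{\QQ}(\mathcal{X},-)$ that was displayed just before the proposition for an arbitrary triple of $\mathcal{D}_{\as}(X)$-complexes with a product; chasing $(\alpha,\beta)$ through it gives $\chh(\alpha)\cdot\chh(\beta)=\chh(\alpha\cdot\beta)$ in $\bigoplus_{p}\cha^{p}_{\QQ}(\mathcal{X},\mathcal{D}_{\D,\as}(X,T))$. The step I expect to require the most care is the wave-front-set bookkeeping in steps one and two---checking that the current product stays within $\mathcal{D}_{\D,\as}(X,T)$ and defines an honest morphism of Deligne complexes---but this is already available from \cite{BurgosLitcanu:SingularBC}, so beyond it the argument is purely formal.
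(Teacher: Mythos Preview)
Your proposal is correct and follows essentially the same approach as the paper: the proposition is stated there as a direct specialization of the general commutative square relating the products on $\widehat{K}_{0}$ and on $\cha^{\ast}_{\QQ}$, applied to the triple $C=\mathcal{D}_{\D,\as}(X,S)$, $C'=\mathcal{D}_{\D,\as}(X,S')$, $C''=\mathcal{D}_{\D,\as}(X,T)$, with the product of currents under the wave-front-set hypothesis coming from \cite{BurgosLitcanu:SingularBC}. Your write-up supplies more of the bookkeeping than the paper does, but the route is the same.
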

Another feature of the Chern character is its compatibility with
pull-back functoriality. Let $f:\mathcal{X}\rightarrow\mathcal{Y}$ be
a morphism of regular arithmetic varieties. Let $C,C'$ be
$\mathcal{D}_{\as}(X)$ and $\mathcal{D}_{\as}(Y)$ complexes,
respectively, together with a morphism of complexes
$f^{\ast}:C'\rightarrow C$ satifying the commutativity
\eqref{eq:12}. Then there is a commutative diagram
\begin{displaymath}
  \xymatrix{
    \widehat{K}_{0}(\mathcal{Y},C')_{\QQ}\ar[r]^{f^{\ast}}\ar[d]_{\chh}
    &\widehat{K}_{0}(\mathcal{X},C)_{\QQ}\ar[d]^{\chh}\\ 
    \cha_{\QQ}^{\ast}(\mathcal{Y},C')\ar[r]^{f^{\ast}}
    &\cha_{\QQ}^{\ast}(\mathcal{X},C).
  }
\end{displaymath}
Again, the proof is a simple consequence for the known compatibility
in the case of the $\mathcal{D}_{\as}$ arithmetic Chow groups. In
particular we have the following proposition.
\begin{proposition}\label{prop:6}
  Let $f\colon \mathcal{X}\rightarrow\mathcal{Y}$ be a morphism of regular
  arithmetic varieties, and $S\subset T_{0}^{\ast}Y_{\CC}$ a closed
  conical subset, invariant under complex conjugation and disjoint
  with the normal directions $N_{f}$ of $f_{\CC}$. Then there is a
  commutative diagram
\begin{displaymath}
  \xymatrix{
    \widehat{K}_{0}(\mathcal{Y},\mathcal{D}_{\D,\as}(Y,S))_{\QQ}\ar[r]^{f^{\ast}}\ar[d]_{\chh}	
    &\widehat{K}_{0}(\mathcal{X},\mathcal{D}_{\D,\as}(X,f^{\ast}(S))_{\QQ}\ar[d]^{\chh}\\
    \cha_{\QQ}^{\ast}(\mathcal{Y},\mathcal{D}_{\D,\as}(Y,S))\ar[r]^{f^{\ast}}
    &\cha_{\QQ}^{\ast}(\mathcal{X},\mathcal{D}_{\D,\as}(X,f^{\ast}(S)). 
  }
\end{displaymath}
\end{proposition}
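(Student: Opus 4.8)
The plan is to deduce this from the compatibility already recorded for the complexes $\mathcal{D}_{\as}$, using that both the arithmetic Chern character $\chh$ and the pull-back $f^{\ast}$ on the generalized groups are defined by transport of structure along the presentations \eqref{eq:9} and \eqref{eq:11}. Concretely, put $C'=\mathcal{D}_{\D,\as}(Y,S)$ on $\mathcal{Y}$ and $C=\mathcal{D}_{\D,\as}(X,f^{\ast}S)$ on $\mathcal{X}$. The first task is to check that these complexes, together with their structure morphisms from $\mathcal{D}_{\as}$, fit into a commutative square of the shape \eqref{eq:12}; granting this, $f^{\ast}$ is defined on $\widehat{K}_{0}(\mathcal{Y},\mathcal{D}_{\D,\as}(Y,S))$, while on the Chow side $f^{\ast}$ is the map of Proposition \ref{prop:3}(ii), so the diagram in the statement makes sense and it remains only to prove commutativity.

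For the square \eqref{eq:12}, the key input is Hörmander's theorem on the pull-back of distributions \cite[Thm. 8.2.4]{Hormander:MR1065993} (see also \cite[Section 4]{BurgosLitcanu:SingularBC}): since the transversality hypothesis $N_{f}\cap S=\emptyset$ holds, the pull-back of a current on $Y_{\CC}$ with wave front set contained in $S$ is a well-defined current on $X_{\CC}$ with wave front set contained in $f^{\ast}S=\{(x,\dd f_{\CC}^{t}\xi)\mid (f(x),\xi)\in S\}$, and this pull-back is continuous for the relevant topology. It commutes with $\dd$ and $\cpd$, hence with $\dd_{\mathcal{D}}$, so it induces a morphism of complexes $f^{\ast}\colon \mathcal{D}_{\D,\as}(Y,S)\to \mathcal{D}_{\D,\as}(X,f^{\ast}S)$ compatible with the inclusions $\cmap'$ into all currents and with $f^{\ast}$ on $\mathcal{D}_{\as}$. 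This yields the commutative square \eqref{eq:12} and, through the presentations \eqref{eq:7}--\eqref{eq:9} and \eqref{eq:11}, the two horizontal arrows of the diagram in Proposition \ref{prop:6}.

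To conclude I would chase that diagram. Each vertical arrow $\chh$ is, by construction, the composite of the isomorphism $\widehat{K}_{0}(\mathcal{X},C)_{\QQ}\cong \bigl(\widehat{K}_{0}(\mathcal{X},\mathcal{D}_{\as})_{\QQ}\times\bigoplus_{p}\widetilde{C}^{2p-1}(p)\bigr)/\!\equiv$ of \eqref{eq:9} with the map induced by $\chh$ on the $\mathcal{D}_{\as}$-factor and by the natural map $\bigoplus_{p}\widetilde{C}^{2p-1}(p)\to\bigoplus_{p}\cha^{p}_{\QQ}(\mathcal{X},C)$ coming from \eqref{eq:11}; likewise each horizontal $f^{\ast}$ is induced by $f^{\ast}$ on $\widehat{K}_{0}(\,\cdot\,,\mathcal{D}_{\as})$ and $\cha^{\ast}(\,\cdot\,,\mathcal{D}_{\as})$ together with the map $f^{\ast}\colon\widetilde{C}^{\prime\,2p-1}(p)\to\widetilde{C}^{2p-1}(p)$ from \eqref{eq:12}. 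Hence the diagram decomposes into: (i) the corresponding diagram for $\mathcal{D}_{\as}$-coefficients, which commutes by the compatibility stated immediately before Proposition \ref{prop:6}; and (ii) the tautological square expressing that pull-back of currents (resp. of secondary Bott--Chern classes) is compatible with the inclusion of secondary classes into $\widehat{K}_{0}$ and into $\cha$, which holds by the very definitions \eqref{eq:7}--\eqref{eq:8} and their Chow analogues.

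The main obstacle, and essentially the only point of real content, is the wave-front-set bookkeeping of the second paragraph: one must verify that the conical set produced by Hörmander's pull-back theorem is exactly the set $f^{\ast}S$ appearing in the target complex $\mathcal{D}_{\D,\as}(X,f^{\ast}S)$, that $N_{f}\cap S=\emptyset$ is precisely the condition ensuring that pull-back of currents is defined and continuous (so that it is a genuine morphism of complexes), and that the period/normalization conventions for currents of \cite[Section 5.4]{BurgosKramerKuehn:cacg} are respected throughout. Once this is in place, the commutativity is a formal consequence of the presentations and of the already-known $\mathcal{D}_{\as}$ case.
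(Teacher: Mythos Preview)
Your proposal is correct and follows essentially the same approach as the paper. The paper does not give a separate proof of this proposition; it merely records it as a particular instance of the general compatibility diagram stated immediately before, remarking that ``the proof is a simple consequence for the known compatibility in the case of the $\mathcal{D}_{\as}$ arithmetic Chow groups.'' Your write-up expands this remark in the expected way: you verify via H\"ormander's pull-back theorem that $f^{\ast}\colon\mathcal{D}_{\D,\as}(Y,S)\to\mathcal{D}_{\D,\as}(X,f^{\ast}S)$ exists and fits into the square \eqref{eq:12}, and then reduce commutativity to the $\mathcal{D}_{\as}$ case through the presentations \eqref{eq:9} and \eqref{eq:11}.
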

\subsection{Arithmetic derived categories}
For the problem of defining direct images on arithmetic $K$-theory it
is useful to deal with arbitrary complexes of coherent sheaves instead
of locally free sheaves. We therefore introduce an arithmetic
counterpart of our theory of hermitian structures on derived
categories of coherent sheaves, developed in
\cite{BurgosFreixasLitcanu:HerStruc}, and the $\hDb$ categories in
\cite{BurgosFreixasLitcanu:GenAnTor}. We then compare this
construction to the arithmetic $K$ groups. 

\begin{definition}
Let $S$ be a scheme. We denote by $\Db(S)$ the derived category of cohomological complexes of quasi-coherent sheaves with bounded coherent cohomology.
\end{definition}
If $\mathcal{X}$ is a regular arithmetic variety, every object of
$\Db(\mathcal{X})$ is quasi-isomorphic to a bounded cohomological
complex of locally free sheaves. One checks there is a well defined
map 
\begin{displaymath}
	\Ob\Db(\mathcal{X})\longrightarrow K_{0}(\mathcal{X})
\end{displaymath}
that sends an object $\mathcal{F}^{\ast}$ to the class
$\sum_{i}(-1)^{i}[\mathcal{E}^{i}]$, where $\mathcal{E}^{\ast}$ is
quasi-isomorphic to $\mathcal{F}^{\ast}$, and that is compatible with
derived tensor products on $\Db(\mathcal{X})$ and the ring structure
on $K_{0}(\mathcal{X})$. Our aim is thus to extend this picture and
incorporate hermitian structures. 

Let us consider the complex quasi-projective manifold $X_{\CC}$
associated to an arithmetic variety $\mathcal{X}$. It comes equipped
with the conjugate-linear involution $F_{\infty}$. Recall the data
$X=(X_{\CC},F_{\infty})$ uniquely determines a smooth quasi-projective
scheme $X_{\RR}$ over $\RR$, whose base change to $\CC$ is isomorphic
to $X_{\CC}$, and such that its natural automorphism given by complex
conjugation gets identified to $F_{\infty}$. The abelian category of
quasi-coherent (resp. coherent) sheaves over $X_{\RR}$ is equivalent
to the category of quasi-coherent (resp. coherent) sheaves over
$X_{\CC}$, equivariant with respect to the action of
$F_{\infty}$. Namely, giving a quasi-coherent (resp. coherent) sheaf
on $X_{\RR}$ is equivalent to giving a quasi-coherent (resp. coherent)
sheaf $\mathcal{F}$ on $X_{\CC}$, together with a morphism of
sheaves
\begin{displaymath}
	\mathcal{F}\longrightarrow F_{\infty\ast}\mathcal{F}
\end{displaymath}
compatible with the conjugate-linear morphism
\begin{displaymath}
	F_{\infty}^{\sharp}:\OO_{X_{\CC}}\longrightarrow F_{\infty\ast}\OO_{X_{\CC}}
\end{displaymath}
induced by the morphism of $\RR$-schemes $F_{\infty} \colon
X_{\CC}\rightarrow X_{\CC}$. A similar condition characterizes
morphisms of quasi-coherent (resp. coherent) sheaves. Therefore, we
denote the bounded derived category of coherent sheaves on $X_{\RR}$
just by $\Db(X)$.

The theory of hermitian structures on the bounded derived category of
coherent sheaves on a complex algebraic manifold developed in
\cite{BurgosFreixasLitcanu:HerStruc} can be adapted to the real
situation of $\Db(X)$, by considering hermitian structures invariant
under the action of complex conjugation. All the results in
\emph{loc. cit.} carry over to the real case. We denote by $\oDb(X)$
the category whose objects are objects of $\Db(X)$ endowed with a
hermitian structure (\emph{loc. cit.}, Def. 3.10) invariant under
complex conjugation, and whose morphisms are just morphisms in
$\Db(X)$. Thus, every object $\ov{\mathcal{F}}^{\ast}$ in $\oDb(X)$ is
represented by a quasi-isomorphism
$\ov{\mathcal{E}}^{\ast}\dashrightarrow\mathcal{F}^{\ast}$, where
$\ov{\mathcal{E}}^{\ast}$ is a bounded complex of hermitian locally
free sheaves on $X_{\CC}$, equivariant under $F_{\infty}$. There is an
obvious forgetful functor $\mathfrak{F}:\oDb(X)\rightarrow\Db(X)$,
that makes of $\oDb(X)$ a principal fibered category over $\Db(X)$,
with structural group $\ov{\KA}(X)$, the group of hermitian structures
over the 0 object \cite[Def. 2.34,
Thm. 3.13]{BurgosFreixasLitcanu:HerStruc}.

Base change to $\RR$ induces a covariant functor
$\Db(\mathcal{X})\rightarrow\Db(X)$.

\begin{definition}
We define the category $\oDb(\mathcal{X})$ as the fiber product category
\begin{displaymath}
  \xymatrix{
    \Db(\mathcal{X})\times_{\Db(X)}\oDb(X)\ar[r]\ar[d]	&\oDb(X)\ar[d]^{\mathfrak{F}}\\
    \Db(\mathcal{X})\ar[r]	&\Db(X).	
  }
\end{displaymath}
We still denote by $\mathfrak{F}$ the forgetful functor $\oDb(\mathcal{X})\rightarrow\Db(\mathcal{X})$.
\end{definition}
By construction, $\mathfrak{F}$ makes of $\oDb(\mathcal{X})$ a principal fibered category over $\Db(\mathcal{X})$, with structure group $\ov{\KA}(X)$. In particular, $\ov{\KA}(X)$ acts on $\oDb(\mathcal{X})$.

The Bott-Chern secondary character $\cht$ can be defined at the level of $\ov{\KA}$ groups \cite[Sec. 4, Def. 4.6]{BurgosFreixasLitcanu:HerStruc}. In our situation, we actually have a morphism of groups
\begin{equation}\label{eq:13}
	\cht:\ov{\KA}(X)\longrightarrow\bigoplus_{p}\widetilde{\mathcal{D}}_{\as}^{2p-1}(X,p).
\end{equation}
More generally, if $C$ is a $\mathcal{D}_{\as}(X)$-complex, we may consider a secondary Chern character with values in $\bigoplus_{p}\widetilde{C}^{2p-1}(p)$, that we denote $\cht_{C}$. In particular, $\ov{\KA}(X)$ acts on $\bigoplus_{p}\widetilde{C}^{2p-1}(p)$ through $\cht_{C}$.
\begin{definition}
The arithmetic derived category $\hDb(\mathcal{X},C)$ is defined as the cartesian product
\begin{displaymath}
	\oDb(\mathcal{X})\times_{\ov{\KA}(X),\cht_{C}}\bigoplus_{p}\widetilde{C}^{2p-1}(p).
\end{displaymath}
\end{definition}
\begin{remark}\label{rem:1}
If $\mathcal{X}$ is regular, then every object of $\hDb(\mathcal{X},C)$ can be represented by 
\begin{displaymath}
	(\mathcal{F}^{\ast},\ov{\mathcal{E}}_{\CC}\dashrightarrow\mathcal{F}^{\ast}_{\CC},\widetilde{\eta})
\end{displaymath}
where $\mathcal{E}^{\ast}\dashrightarrow\mathcal{F}^{\ast}$ is any quasi-isomorphism from a bounded complex of locally free sheaves over $\mathcal{X}$. Indeed, $\mathcal{X}$ is assumed to be regular, so that $\mathcal{F}^{\ast}$ is quasi-isomorphic to a bounded complex of locally free sheaves $\mathcal{E}^{\ast}$. One then endows the $\mathcal{E}^{i}$ with smooth hermitian metrics invariant under complex conjugation, and takes into account that $\ov{\KA}(X)$ acts transitively on the hermitian structures on $\mathcal{F}^{\ast}_{\CC}$. We introduce the simplified notation $(\ov{\mathcal{E}}^{\ast}\dashrightarrow\mathcal{F},\widetilde{\eta})$ for such representatives.
\end{remark}
The categories $\hDb(\mathcal{X},C)$ are the arithmetic analogues to those we introduced in \cite[Sec. 4]{BurgosFreixasLitcanu:GenAnTor}, and have similar properties. We are only going to review some of them.

For the next proposition, we recall that any group can be considered
as a category, with morphisms given by the group law. This in
particular the case of $\widehat{K}_{0}(\mathcal{X},C)$.
\begin{theorem}
If $\mathcal{X}$ is regular, there is a natural functor
\begin{displaymath}
  \hDb(\mathcal{X},C)\longrightarrow\widehat{K}_{0}(\mathcal{X},C),
\end{displaymath}
that, at the level of objects is given by
\begin{displaymath}
  (\ov{\mathcal{E}}^{\ast}\dashrightarrow\mathcal{F},\widetilde{\eta})\longmapsto
  \left[\sum_{i}(-1)^{i}\ov{\mathcal{E}}^{i},\widetilde{\eta}\right],
\end{displaymath}
and at the level of morphism sends any $f\in \Hom_{\hDb(\mathcal{X},C)}(A,B)$ to $[B]-[A]$.
\end{theorem}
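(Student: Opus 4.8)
The statement asserts that the assignment $(\ov{\mathcal{E}}^{\ast}\dashrightarrow\mathcal{F},\widetilde\eta)\mapsto[\sum_i(-1)^i\ov{\mathcal{E}}^i,\widetilde\eta]$ is well defined on objects and that, combined with $f\mapsto[B]-[A]$ on morphisms, it gives a functor $\hDb(\mathcal{X},C)\to\widehat K_0(\mathcal X,C)$. Since the target is a group viewed as a one-object category, functoriality amounts to two things: (a) the object map is independent of the chosen representative $(\ov{\mathcal{E}}^{\ast}\dashrightarrow\mathcal{F},\widetilde\eta)$ of the class in $\hDb(\mathcal X,C)$, and (b) the morphism assignment is compatible with composition and identities, i.e. $[C]-[A]=([C]-[B])+([B]-[A])$ and $[A]-[A]=0$ — which is automatic once (a) holds and the object map lands in a group. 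So the whole content is the well-definedness in (a).

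**Key steps.** First I would recall from Remark \ref{rem:1} the precise description of objects of $\hDb(\mathcal X,C)$: a triple consisting of $\mathcal F^\ast\in\Db(\mathcal X)$, a hermitian structure on $\mathcal F^\ast_\CC$ presented by a quasi-isomorphism $\ov{\mathcal E}^\ast_\CC\dashrightarrow\mathcal F^\ast_\CC$ from a bounded complex of hermitian locally free sheaves, and an element $\widetilde\eta\in\bigoplus_p\widetilde C^{2p-1}(p)$, with the fiber-product identification coming from the $\ov{\KA}(X)$-action through $\cht_C$. I would treat the ambiguities one at a time. (i) \emph{Change of the algebraic resolution $\mathcal E^\ast\dashrightarrow\mathcal F^\ast$ with fixed hermitian structure and fixed $\widetilde\eta$:} two bounded locally free resolutions of the same $\mathcal F^\ast\in\Db(\mathcal X)$ have equal images $\sum_i(-1)^i[\mathcal E^i]$ in $K_0(\mathcal X)$ — this is the classical fact recalled just before the statement (the map $\Ob\Db(\mathcal X)\to K_0(\mathcal X)$ is well defined). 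To upgrade this to $\widehat K_0(\mathcal X,C)$ one uses that any two choices of hermitian metrics on the terms, together with the two resolutions, can be compared by the Bott–Chern classes $\cht$; more precisely, passing through a common refinement (a resolution dominating both, existing because $\mathcal X$ is regular) and using the additivity of $\widehat{\ch}$-secondary classes for short exact sequences — exactly the defining relations of $\widehat K_0(\mathcal X,C)$ in Definition \ref{def:2} — shows the class $[\sum_i(-1)^i\ov{\mathcal E}^i,\widetilde\eta]$ is unchanged. (ii) \emph{Change of the hermitian structure on $\mathcal F^\ast_\CC$ via the $\ov{\KA}(X)$-action:} acting by $\alpha\in\ov{\KA}(X)$ replaces the hermitian metrics on the $\mathcal E^i$ by others and simultaneously, by the fiber-product definition of $\hDb(\mathcal X,C)$, replaces $\widetilde\eta$ by $\widetilde\eta+\cht_C(\alpha)$. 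On the $\widehat K_0$ side, the change of metrics changes $\sum_i(-1)^i\ov{\mathcal E}^i$ by exactly $\cmap(\cht(\alpha))=\cht_C(\alpha)$ — this is the compatibility of the Bott–Chern secondary character with the $\ov{\KA}$-action from \cite[Sec.~4]{BurgosFreixasLitcanu:HerStruc} together with the relations in Definition \ref{def:2} applied to the exact sequences comparing the two metrics — so the total class $[\sum(-1)^i\ov{\mathcal E}^i,\widetilde\eta]$ in $\widehat K_0(\mathcal X,C)$ is invariant. Combining (i) and (ii) gives well-definedness on objects.

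**Morphisms and conclusion.** For the morphism part: a morphism $A\to B$ in $\hDb(\mathcal X,C)$ is a morphism $\mathfrak F(A)\to\mathfrak F(B)$ in $\Db(\mathcal X)$ such that the induced comparison of hermitian structures matches the prescribed shift in the $C$-components; in particular $\mathfrak F(A)$ and $\mathfrak F(B)$ are isomorphic in $\Db(\mathcal X)$, so $[A]$ and $[B]$ make sense and $[B]-[A]\in\widehat K_0(\mathcal X,C)$ is the target of $f$. Composition compatibility $[C]-[A]=([C]-[B])+([B]-[A])$ and preservation of identities are then trivial additions in the abelian group $\widehat K_0(\mathcal X,C)$, so the functor axioms hold once the object map is well defined.

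**Main obstacle.** The delicate point is step (i)–(ii): verifying that changing the locally free resolution \emph{and} the hermitian metrics does not alter the class, i.e. carefully chasing the Bott–Chern additivity relations through a common refinement of two resolutions. Everything else is formal. I would expect the authors to reduce this to known additivity properties of $\cht$ on $\oDb(X)$ from \cite{BurgosFreixasLitcanu:HerStruc} (tensoring/cone compatibilities of the secondary character), matched against the generating relations of $\widehat K_0(\mathcal X,C)$ in Definition \ref{def:2}, rather than re-proving anything from scratch.
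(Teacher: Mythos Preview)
Your outline is essentially correct: the entire content is well-definedness on objects, and functoriality on morphisms is a formal identity in the abelian group $\widehat{K}_0(\mathcal X,C)$. The paper's proof follows the same philosophy but organizes the ambiguity differently and isolates one technical point that you allude to only vaguely.

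Rather than splitting into your (i) and (ii), the paper treats any two representatives $(\ov{\mathcal E}^\ast\dashrightarrow\mathcal F^\ast,\widetilde\eta)$ and $(\ov{\mathcal E}^{\prime\ast}\dashrightarrow\mathcal F^\ast,\widetilde\eta')$ at once. The fiber-product relation forces $\widetilde\eta=\widetilde\eta'+c(\widetilde\ch(\Id\colon\ov{\mathcal F}^\ast\to\ov{\mathcal F}^{\prime\ast}))$, so the task reduces to showing
\[
\Bigl[\sum_i(-1)^i\ov{\mathcal E}^{\prime i},0\Bigr]-\Bigl[\sum_i(-1)^i\ov{\mathcal E}^{i},0\Bigr]
=[0,\,c(\widetilde\ch(\Id\colon\ov{\mathcal F}^\ast\to\ov{\mathcal F}^{\prime\ast}))]
\]
in $\widehat K_0(\mathcal X,C)$. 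For this the paper invokes \cite[Lemma~3.5]{BurgosFreixasLitcanu:HerStruc} to lift $\ov{\mathcal E}^\ast\dashrightarrow\ov{\mathcal E}^{\prime\ast}$ to a roof $\ov{\mathcal E}^\ast\xleftarrow{a}\ov{\mathcal E}^{\prime\prime\ast}\xrightarrow{b}\ov{\mathcal E}^{\prime\ast}$ with $\ocone(a)$ \emph{meager}. The decisive technical step, which your ``common refinement plus additivity'' does not quite name, is that meager complexes have vanishing class in $\widehat K_0$: this follows from the structural characterization \cite[Thm.~2.13]{BurgosFreixasLitcanu:HerStruc} (meager complexes are built from orthogonally split short exact sequences), matched against the defining relation of $\widehat K_0(\mathcal X,C)$. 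The other cone $\ocone(b)$ is merely acyclic, so its class equals $[0,c(\widetilde\ch(\ocone(b)))]$, and \cite[Def.~3.14, Thm.~4.11]{BurgosFreixasLitcanu:HerStruc} identify $\widetilde\ch(\ocone(b))$ with $\widetilde\ch(\Id\colon\ov{\mathcal F}^\ast\to\ov{\mathcal F}^{\prime\ast})$. Your step (i) is exactly where the meager-complex argument would live; without singling it out, ``additivity for short exact sequences'' is not enough, since a general acyclic hermitian complex contributes a nonzero Bott--Chern class rather than zero.

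One small correction on morphisms: morphisms in $\hDb(\mathcal X,C)$ are, by construction, just morphisms in $\Db(\mathcal X)$ between the underlying complexes; they need not be isomorphisms, and there is no ``matching of hermitian structures'' condition. This does not affect your argument, since $f\mapsto[B]-[A]$ is compatible with composition regardless.
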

\begin{proof}
It is enough to see that the defining assignment does not depend on
the representatives. For this, take two objects
$(\ov{\mathcal{E}}^{\ast}\dashrightarrow\mathcal{F}^{\ast},\widetilde{\eta})$
and
$(\ov{\mathcal{E}}^{\prime\ast}\dashrightarrow\mathcal{F}^{\ast},\widetilde{\eta}^{\prime})$
giving raise to the same class. We have an equivalence of objects
\begin{displaymath}
  \begin{split}
    (\ov{\mathcal{E}}^{\ast}\dashrightarrow\mathcal{F}^{\ast},\widetilde{\eta})\sim&
    (\ov{\mathcal{E}}^{\prime\ast}\dashrightarrow\mathcal{F}^{\ast},\widetilde{\eta}^{\prime})\\
    &\sim
    (\ov{\mathcal{E}}^{\ast}\dashrightarrow\mathcal{F}^{\ast},\widetilde{\eta}^{\prime}+c(\widetilde{\ch}(\Id\colon
    \ov{\mathcal{F}}^{\ast}\rightarrow\ov{\mathcal{F}}^{\prime\ast}))).
  \end{split}
\end{displaymath}
Consequently
\begin{displaymath}
	\widetilde{\eta}=\widetilde{\eta}^{\prime}+c(\widetilde{\ch}(\Id\colon\ov{\mathcal{F}}^{\ast}\rightarrow\ov{\mathcal{F}}^{\prime\ast})).
\end{displaymath}
Hence, the image of $(\ov{\mathcal{E}}^{\ast}\dashrightarrow\mathcal{F}^{\ast},\widetilde{\eta})$ is equivalently written
\begin{displaymath}
	\left[\sum_{i}(-1)^{i}\ov{\mathcal{E}}^{i},\widetilde{\eta}^{\prime}+c(\widetilde{\ch}(\Id\colon\ov{\mathcal{F}}^{\ast}\rightarrow\ov{\mathcal{F}}^{\prime\ast}))\right].
\end{displaymath}
We thus have to show the equality in $\widehat{K}_{0}(\mathcal{X},C)$
\begin{displaymath}
	[0,c(\widetilde{\ch}(\Id\colon\ov{\mathcal{F}}^{\ast}\rightarrow\ov{\mathcal{F}}^{\prime\ast}))]\overset{?}{=}\left[\sum_{i}(-1)^{i}\ov{\mathcal{E}}^{\prime
            i},0\right]-\left[\sum_{i}(-1)^{i}\ov{\mathcal{E}}^{i},0\right].
\end{displaymath}
By \cite[Lemma 3.5]{BurgosFreixasLitcanu:HerStruc}, the quasi-isomorphism $\ov{\mathcal{E}}^{\ast}\dashrightarrow\ov{\mathcal{E}}^{\prime\ast}$ inducing the identity on $\mathcal{F}^{\ast}$ can be lifted to a diagram
\begin{displaymath}
  \xymatrix{
    &\ov{\mathcal{E}}^{''\ast}\ar[ld]_{a}\ar[rd]^{b}	&\\
    \ov{\mathcal{E}}^{\ast}	&	&\ov{\mathcal{E}}^{\prime\ast},
  }
\end{displaymath}
where $a$ and $b$ are quasi-isomorphisms and $\ocone(a)$ is meager \cite[Def. 2.9]{BurgosFreixasLitcanu:HerStruc}. On the one hand, by the characterization \cite[Thm. 2.13]{BurgosFreixasLitcanu:HerStruc} of meager complexes, one can show
\begin{displaymath}
	\left[\sum_{i}(-1)^{i}\ocone(a)^{i},0\right]=0.
\end{displaymath}
On the other hand, we have an exact sequence of complexes
\begin{displaymath}
	0\rightarrow\ov{\mathcal{E}}^{''\ast}\rightarrow\ocone(a)\rightarrow\ov{\mathcal{E}}^{\ast}[1]\rightarrow 0,
\end{displaymath}
whose constituent rows are orthogonally split. This shows
\begin{equation}\label{eq:14}
	\left[\sum_{i}(-1)^{i}\ov{\mathcal{E}}^{'' i},0\right]-\left[\sum_{i}(-1)^{i}\ov{\mathcal{E}}^{i},0\right]=\left[\sum_{i}(-1)^{i}\ocone(a)^{i},0\right]=0.
\end{equation}
Similarly we have
\begin{equation}\label{eq:15}
	\left[\sum_{i}(-1)^{i}\ov{\mathcal{E}}^{'i},0\right]-\left[\sum_{i}(-1)^{i}\ov{\mathcal{E}}^{'' i},0\right]=\left[\sum_{i}(-1)^{i}\ocone(b)^{i},0\right].
\end{equation}
But the complex underlying $\ocone(b)$ is acyclic, so that
\begin{equation}\label{eq:16}
	\left[\sum_{i}(-1)^{i}\ocone(b)^{i},0\right]=[0,c(\widetilde{\ch}(\ocone(b))].
\end{equation}
Finally, by \cite[Def. 3.14, Thm. 4.11]{BurgosFreixasLitcanu:HerStruc} we have
\begin{equation}\label{eq:17}
	\widetilde{\ch}(\Id:\ov{\mathcal{F}}^{\ast}\rightarrow\ov{\mathcal{F}}^{'\ast})=\widetilde{\ch}(\ocone(b)).
\end{equation}
Putting \eqref{eq:14}--\eqref{eq:17} together allows to conclude.
\end{proof}
\begin{notation}
Let $\mathcal{X}$ be a regular arithmetic variety. Then we still denote the image of an object $[\ov{\mathcal{F}}^{\ast},\widetilde{\eta}]\in\Ob\hDb(\mathcal{X},C)$ by the morphism of the proposition by $[\ov{\mathcal{F}}^{\ast},\widetilde{\eta}]$. We call this image the class of $[\ov{\mathcal{F}}^{\ast},\widetilde{\eta}]$ in arithmetic $K$-theory.
\end{notation}
\begin{remark}
Two tightly isomorphic objects in $\hDb(\mathcal{X},C)$ have the same class in arithmetic $K$-theory.
\end{remark}
Let $f:\mathcal{X}\rightarrow\mathcal{Y}$ be a morphism of regular arithmetic varieties and let $C$, $C'$ be $\mathcal{D}_{\as}(X)$ and $\mathcal{D}_{\as}(Y)$ complexes respectively, with a commutative diagram as in \eqref{eq:12}. Then there is a commutative diagram of functors
\begin{displaymath}
	\xymatrix{
		\hDb(\mathcal{Y},C')\ar[r]^{f^{\ast}}\ar[d]	&\hDb(\mathcal{X},C)\ar[d]\\
		\widehat{K}_{0}(\mathcal{Y},C')\ar[r]_{f^{\ast}}	&\widehat{K}_{0}(\mathcal{X},C).
	}
\end{displaymath}
The following statement is a particular case.
\begin{proposition}
Let $f:\mathcal{X}\rightarrow\mathcal{Y}$ be a morphism of regular arithmetic varieties, and $S\subset T_{0}^{\ast}Y_{\CC}$ a closed conical subset  invariant under complex conjugation, disjoint with the normal directions $N_{f}$ of $f_{\CC}$. Then there is a commutative diagram
\begin{displaymath}
	\xymatrix{
		\hDb(\mathcal{Y},\mathcal{D}_{\D,\as}(Y,S))\ar[r]^-{f^{\ast}}\ar[d]	&\hDb(\mathcal{X},\mathcal{D}_{\D,\as}(X, f^{\ast}(S)))\ar[d]\\
		\widehat{K}_{0}(\mathcal{Y},\mathcal{D}_{\D,\as}(Y,S))\ar[r]_-{f^{\ast}}	&\widehat{K}_{0}(\mathcal{X},\mathcal{D}_{\D,\as}(X,f^{\ast}(S))).
	}
\end{displaymath}
\end{proposition}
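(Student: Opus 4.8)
The plan is to deduce this proposition as the promised particular case of the general commutative diagram of functors that precedes it, by checking that the hypotheses of that general statement are met in the situation at hand. Concretely, I would set $C'=\mathcal{D}_{\D,\as}(Y,S)$ on $\mathcal{Y}$ and $C=\mathcal{D}_{\D,\as}(X,f^{\ast}(S))$ on $\mathcal{X}$; these are legitimate $\mathcal{D}_{\as}(Y)$- and $\mathcal{D}_{\as}(X)$-complexes by Example \ref{exm:1}\ref{item:6}. The one nontrivial input is the existence of a pull-back morphism of complexes $f^{\ast}\colon C'^{\ast}(\ast)\to C^{\ast}(\ast)$ fitting into the square \eqref{eq:12}. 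This is exactly where the transversality hypothesis $N_f\cap S=\emptyset$ enters: by the discussion preceding Proposition \ref{prop:3} (following \cite[Theorem 8.2.4]{Hormander:MR1065993}), when $f_{\CC}$ is transverse to $S$ the pull-back of currents $f_{\CC}^{\ast}$ is defined on currents with wave front set in $S$ and lands in currents with wave front set in $f^{\ast}(S)$; passing to the associated Deligne complexes and taking $\sigma$-invariants, one obtains a map of complexes $\mathcal{D}_{\D,\as}^{\ast}(Y,S,\ast)\to \mathcal{D}_{\D,\as}^{\ast}(X,f^{\ast}(S),\ast)$ which restricts on smooth forms to the ordinary pull-back, so the square \eqref{eq:12} commutes.

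Having verified the hypothesis, I would simply invoke the general statement: for a morphism $f$ of regular arithmetic varieties and $\mathcal{D}_{\as}$-complexes $C$, $C'$ with a compatible $f^{\ast}$ as in \eqref{eq:12}, there is a commutative square of functors relating $\hDb(\mathcal{Y},C')\to\hDb(\mathcal{X},C)$ to $\widehat{K}_{0}(\mathcal{Y},C')\to\widehat{K}_{0}(\mathcal{X},C)$. Substituting the two complexes above gives precisely the asserted diagram. I would remark that the pull-back functor $f^{\ast}$ on the arithmetic derived categories is the one built from the pull-back on $\oDb$ (adapted to the real setting as in \cite{BurgosFreixasLitcanu:HerStruc}) together with the pull-back on the $\widetilde{C}^{2p-1}(p)$-component induced by the above $f^{\ast}$ of complexes, and that its compatibility with $\cht_{C}$ is inherited from the already-established compatibility of the Bott--Chern secondary character with pull-back of hermitian structures; the vertical arrows are the forgetful/class maps of the preceding Theorem.

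The only point requiring a little care—and the step I expect to be the main obstacle—is checking that the pull-back of currents genuinely defines a \emph{morphism of complexes} compatible with the Deligne differential $\dd_{\mathcal{D}}$ and with the $\sigma$-involution, rather than merely a linear map on each degree. This amounts to the facts that $f_{\CC}^{\ast}$ commutes with $\partial$ and $\bar\partial$ on currents with controlled wave front set, and that $f_{\CC}^{\ast}$ intertwines the involutions $\eta\mapsto\overline{F_{\infty}^{\ast}\eta}$ on source and target (which holds because $f$ commutes with the real structures). Both are standard properties of wave-front-controlled pull-back recorded in \cite[Section 4]{BurgosLitcanu:SingularBC}, so once they are cited the argument is complete. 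I would close by noting that everything in sight is strictly functorial, so there is no coherence obstruction in passing from the pointwise maps to the diagram of functors.
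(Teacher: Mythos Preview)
Your proposal is correct and follows exactly the paper's approach: the paper states this proposition immediately after the general commutative diagram for arbitrary $\mathcal{D}_{\as}$-complexes $C$, $C'$ with a compatible $f^{\ast}$ as in \eqref{eq:12}, introducing it with the sentence ``The following statement is a particular case,'' and gives no further proof. Your write-up in fact supplies more detail than the paper does, correctly identifying the transversality hypothesis $N_f\cap S=\emptyset$ as the input guaranteeing the existence of the pull-back on currents with wave front set in $S$.
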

If $C,C',C''$ are $\mathcal{D}_{\as}(X)$-complexes with a product $C\otimes C'\rightarrow C''$ compatible with the product of $\mathcal{D}_{\as}(X)$ and $\mathcal{X}$ is regular, then there is a commutative diagram of functors
\begin{displaymath}
	\xymatrix{
		\hDb(\mathcal{X},C)\times\hDb(\mathcal{X},C')\ar[r]^-{\otimes} \ar[d]	&\hDb(\mathcal{X},C'')\ar[d]\\
		\widehat{K}_{0}(\mathcal{X},C)\times\widehat{K}_{0}(\mathcal{X},C')\ar[r]	&\widehat{K}_{0}(\mathcal{X},C''),
	}
\end{displaymath}
where the derived tensor product $\otimes$ is defined by
\begin{displaymath}
	[\ov{\mathcal{F}},\eta]\otimes [\ov{\mathcal{G}},\nu]=
    [\ov{\mathcal{F}}\otimes\ov{\mathcal{G}},
    c(\ch(\ov{\mathcal{F}}))\bullet\nu+
    \eta\bullet c'(\ch(\ov{\mathcal{G}}))+\dd_{C}\eta\bullet\nu]
\end{displaymath}
\begin{proposition}
Let $S, S'$ be closed conical subsets of $T_{0}^{\ast}X_{\CC}$ invariant under the action of complex conjugation, with $S\cap (-S')=\emptyset$. Define $T=S\cup S'\cup (S+S')$. If $\mathcal{X}$ is regular, then there is a commutative diagram of functors
\begin{displaymath}
	\xymatrix{
		\hDb(\mathcal{X},\mathcal{D}_{\D,\as}(X,S))\times\hDb(\mathcal{X},\mathcal{D}_{\D,\as}(X,S'))\ar[r]^-{\otimes} \ar[d]	
			&\hDb(\mathcal{X},\mathcal{D}_{\D,\as}(X,T))\ar[d]\\
		\widehat{K}_{0}(\mathcal{X},\mathcal{D}_{\D,\as}(X,S))\times\widehat{K}_{0}(\mathcal{X},\mathcal{D}_{\D,\as}(X,S'))\ar[r]	
		&\widehat{K}_{0}(\mathcal{X},\mathcal{D}_{\D,\as}(X,T)).
	}
\end{displaymath}
Furthermore, it is compatible with pull-back $f^{\ast}$ whenever defined.
\end{proposition}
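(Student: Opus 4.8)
The plan is to deduce the proposition from the general commutative diagram of functors, recorded immediately above, relating the derived tensor product on the categories $\hDb(\mathcal{X},\cdot)$ to the product on the groups $\widehat{K}_{0}(\mathcal{X},\cdot)$: that diagram holds for \emph{any} triple $C,C',C''$ of $\mathcal{D}_{\as}(X)$-complexes equipped with a product $C\otimes C'\to C''$ compatible with the wedge product of $\mathcal{D}_{\as}(X)$, and, since both the product construction and the pull-back maps are natural in the complexes, it is also functorial for pull-back along morphisms of regular arithmetic varieties. Hence the whole content of the proposition is (a)~to exhibit such a product for the triple $\mathcal{D}_{\D,\as}(X,S)$, $\mathcal{D}_{\D,\as}(X,S')$, $\mathcal{D}_{\D,\as}(X,T)$ with $T=S\cup S'\cup(S+S')$, and (b)~to check this product is compatible with pull-back of currents; feeding these into the general diagram then yields both assertions.

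For (a), I would use the wedge product of currents with controlled wave front sets. Since $S\cap(-S')=\emptyset$, H\"ormander's theorem on the multiplication of distributions, in the form recalled in \cite[Section~4]{BurgosLitcanu:SingularBC}, guarantees that $\omega\wedge\omega'$ is well defined whenever $\WF(\omega)\subseteq S$ and $\WF(\omega')\subseteq S'$, that $\WF(\omega\wedge\omega')\subseteq S\cup S'\cup(S+S')=T$, that the product is sequentially continuous and obeys the Leibniz rule for $d$ (hence for $\dd_{\mathcal{D}}$), and that it restricts to the ordinary wedge product on smooth forms. Transporting this through the Dolbeault-to-Deligne complex construction and passing to $\sigma$-invariants gives a morphism of complexes $\mathcal{D}_{\D,\as}(X,S)\otimes\mathcal{D}_{\D,\as}(X,S')\to\mathcal{D}_{\D,\as}(X,T)$; its compatibility with $\mathcal{D}_{\as}(X)\otimes\mathcal{D}_{\as}(X)\to\mathcal{D}_{\as}(X)$ is immediate, since the product extends the wedge product of forms and a smooth form has empty wave front set. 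This is exactly the pairing already used in Proposition~\ref{prop:2}~(iii).

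For (b), take $f\colon\mathcal{X}\to\mathcal{Y}$ and closed conical $F_{\infty}$-invariant subsets $S,S'\subset T^{\ast}_{0}Y_{\CC}$ with $S\cap(-S')=\emptyset$ and $N_{f}\cap(S\cup S'\cup(S+S'))=\emptyset$, so that the relevant pull-back maps on $\hDb$ and $\widehat{K}_{0}$ are defined. By Theorem~\ref{thm:1} one has $f^{\ast}(S)\cap(-f^{\ast}(S'))=\emptyset$ and $f^{\ast}(T)=f^{\ast}(S)\cup f^{\ast}(S')\cup(f^{\ast}(S)+f^{\ast}(S'))$, so the product of (a) exists simultaneously over $\mathcal{Y}$ and over $\mathcal{X}$ with matching targets. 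One then checks, using the transversality hypothesis, that pull-back of currents commutes with the wedge product, $f^{\ast}(\omega\wedge\omega')=f^{\ast}\omega\wedge f^{\ast}\omega'$ (again \cite[Section~4]{BurgosLitcanu:SingularBC}), so that the two product morphisms, over $\mathcal{Y}$ and over $\mathcal{X}$, are intertwined by $f^{\ast}$ at the level of complexes. Functoriality of the general diagram in the complexes then transports this compatibility to the level of $\hDb$ and $\widehat{K}_{0}$.

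The only genuinely substantive point — everything else being quotation — is the wave-front-set bookkeeping: verifying that the wedge product of currents with prescribed wave front set is a bona fide morphism of Deligne complexes (compatible with $\dd_{\mathcal{D}}$, with the involution $\sigma$, and with the twist by powers of $2\pi i$), and that the conical sets $S$, $S'$, $S+S'$, $N_{f}$, $f^{\ast}S$, $f^{\ast}S'$ behave correctly under wedge and pull-back. I expect this to be the main obstacle, but it is precisely what Theorem~\ref{thm:1} and the results of \cite{BurgosLitcanu:SingularBC} were designed to control, so the argument should reduce to assembling already-available pieces.
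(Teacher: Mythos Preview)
Your proposal is correct and follows exactly the approach the paper intends: the proposition is stated without proof, as an immediate specialization of the general commutative diagram established just before it, once one feeds in the product $\mathcal{D}_{\D,\as}(X,S)\otimes\mathcal{D}_{\D,\as}(X,S')\to\mathcal{D}_{\D,\as}(X,T)$ already used in Proposition~\ref{prop:2}(iii) and the pull-back compatibility from Theorem~\ref{thm:1}. Your write-up is in fact more detailed than what the paper provides.
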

Finally, the class functor and the arithmetic Chern character on arithmetic $K$ groups, allow to extend it to arithmetic derived categories.
\begin{notation}\label{not:1}
Let $\mathcal{X}$ be a regular arithmetic variety and $C$ a $\mathcal{D}_{\as}(X)$ complex. We denote by
\begin{displaymath}
	\chh:\hDb(\mathcal{X},C)\longrightarrow\bigoplus_{p}\cha^{p}_{\QQ}(\mathcal{X},C)
\end{displaymath}
the arithmetic Chern character on $\hDb(\mathcal{X},C)$, obtained as
the composition of the class functor and $\chh$ on
$\widehat{K}_{0}(\mathcal{X},C)$.
\end{notation}
\section{Arithmetic characteristic classes}\label{section:ArChar}
Let $\mathcal{X}$ be a regular arithmetic variety. From the previous
sections, there exists a natural functor 
\begin{displaymath}
  \oDb(\mathcal{X})\longrightarrow\widehat{K}_{0}(\mathcal{X},\mathcal{D}_{\as}(X))
\end{displaymath}
that factors through $\hDb(\mathcal{X},C)$, and there is a ring isomorphism
\begin{displaymath}
  \chh:\widehat{K}_{0}(\mathcal{X},\mathcal{D}_{\as}(X))_{\QQ}\longrightarrow
  \bigoplus_{p}\cha^{p}_{\QQ}(\mathcal{X},\mathcal{D}_{\as}(X)).
\end{displaymath}
We therefore obtain a functor
\begin{displaymath}
  \chh:\oDb(\mathcal{X})\longrightarrow
  \bigoplus_{p}\cha^{p}_{\QQ}(\mathcal{X},\mathcal{D}_{\as}(X))
\end{displaymath}
automatically satisfying several compatibilities with the operations
in $\oDb(\mathcal{X})$ and distinguished triangles. More generally, in
this section we construct arithmetic characteristic classes attached
to real additive or multiplicative genera. The case of the arithmetic
Todd class will be specially relevant, since it is involved in the
arithmetic Riemann-Roch theorem. Our construction relies on the one
given by Gillet-Soul\'e \cite{GilletSoule:vbhm}.

Let $B$ be a subring of $\RR$ and $\varphi\in B[[x]]$ a real power series, defining an
additive genus. For each hermitian vector bundle $\ov {\mathcal{E}}$, in
\cite{GilletSoule:vbhm}, there is attached a class
$\widehat{\varphi}(\ov{\mathcal{E}})\in \cha_{B}
^{\ast}(\mathcal{X})$. By the isomorphism \cite[Theorem
3.33]{BurgosKramerKuehn:accavb} we obtain a class 
$\widehat{\varphi}(\ov{\mathcal{E}})\in \cha_{B}
^{\ast}(\mathcal{X},\mathcal{D}_{\as}(X))$.

For every finite complex of smooth hermitian vector
bundles $\ov{\mathcal{E}}^{\ast}$, we put
\begin{displaymath}
  \widehat{\varphi}(\ov{\mathcal{E}}^{\ast})=
  \sum_{i}(-1)^{i}\widehat{\varphi}(\ov{\mathcal{E}}^{i})\in
  \bigoplus_{p}\cha^{p}_{B}(\mathcal{X},\mathcal{D}_{\as}(X)).  
\end{displaymath}
Let us now consider an object $\ov{\mathcal{F}}^{\ast}$ in
$\oDb(\mathcal{X})$. We choose an auxiliary quasi-isomor\-phism
$\psi:\mathcal{E}^{\ast}\dashrightarrow\mathcal{F}^{\ast}$, where
$\mathcal{E}^{\ast}$ is a bounded complex of locally free sheaves on
$\mathcal{X}$. This is possible since $\mathcal{X}$ is regular by
assumption. We also fix auxiliary smooth hermitian metrics on the
individual terms $\mathcal{E}^{i}$. We thus obtain an isomorphism
$\ov{\psi}\colon\ov{\mathcal{E}}^{\ast}\dashrightarrow\ov{\mathcal{F}}^{\ast}$
that in general is not tight. The lack of tightness is measured by a
class $[\ov{\psi}_{\CC}]\in\ov{\KA}(X)$, that we simply denote
$[\ov{\psi}]$ \cite[Sec. 3]{BurgosFreixasLitcanu:HerStruc}. Recall
that Bott-Chern secondary classes can be defined at the level of
$\ov{\KA}(X)$ (see \emph{loc. cit.} Sec. 4, and especially the
characterization given in Prop. 4.6). In particular we have a class
\begin{displaymath}
  \widetilde{\varphi}(\ov{\psi}):=\widetilde{\varphi}([\ov{\psi}])
  \in\bigoplus_{p}\widetilde{\mathcal{D}}_{\as}^{2p-1}(X,p).
\end{displaymath}
\begin{lemma}
The class
\begin{equation}\label{eq:22}
  \widehat{\varphi}(\ov{\mathcal{E}}^{\ast})+
  \amap(\widetilde{\varphi}(\ov{\psi}))\in
  \bigoplus_{p}\cha^{p}_{B}(\mathcal{X},\mathcal{D}_{\as}(X))
\end{equation}
depends only on $\ov{\mathcal{F}}^{\ast}$.
\end{lemma}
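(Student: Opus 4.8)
The plan is to prove that the class \eqref{eq:22} is independent of \emph{all} the auxiliary choices, namely the resolution $\psi\colon\mathcal{E}^{\ast}\dra\mathcal{F}^{\ast}$ by a bounded complex of locally free sheaves and the smooth hermitian metrics on the $\mathcal{E}^{i}$; independence of the metrics on a fixed resolution is the special case $\mathcal{E}^{\prime\ast}=\mathcal{E}^{\ast}$. So fix two such choices, giving $\ov{\psi}\colon\ov{\mathcal{E}}^{\ast}\dra\ov{\mathcal{F}}^{\ast}$ and $\ov{\psi}'\colon\ov{\mathcal{E}}^{\prime\ast}\dra\ov{\mathcal{F}}^{\ast}$ with the \emph{same} target hermitian object $\ov{\mathcal{F}}^{\ast}$. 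Since $\mathcal{X}$ is regular, $\mathcal{E}^{\ast}$ and $\mathcal{E}^{\prime\ast}$ represent the same object of $\Db(\mathcal{X})$, and I would apply \cite[Lemma 3.5]{BurgosFreixasLitcanu:HerStruc} to the induced hermitian quasi-isomorphism $\ov{\mathcal{E}}^{\ast}\dra\ov{\mathcal{E}}^{\prime\ast}$ (exactly as in the proof of the class functor $\hDb(\mathcal{X},C)\to\widehat{K}_{0}(\mathcal{X},C)$ above) to obtain a diagram
\[
  \ov{\mathcal{E}}^{\ast}\xleftarrow{\;\ov{a}\;}\ov{\mathcal{E}}^{\prime\prime\ast}\xrightarrow{\;\ov{b}\;}\ov{\mathcal{E}}^{\prime\ast}
\]
of hermitian quasi-isomorphisms with $\ocone(\ov{a})$ meager \cite[Def. 2.9]{BurgosFreixasLitcanu:HerStruc}, and such that $\ov{\psi}\circ\ov{a}$ and $\ov{\psi}'\circ\ov{b}$ have the same source $\ov{\mathcal{E}}^{\prime\prime\ast}$, the same target $\ov{\mathcal{F}}^{\ast}$, and the same underlying morphism $\mathcal{E}^{\prime\prime\ast}\to\mathcal{F}^{\ast}$ of $\Db(\mathcal{X})$.

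The next step is to compare $\widehat{\varphi}(\ov{\mathcal{E}}^{\ast})$ and $\widehat{\varphi}(\ov{\mathcal{E}}^{\prime\ast})$. The hermitian mapping cones sit in orthogonally split short exact sequences $0\to\ov{\mathcal{E}}^{\ast}\to\ocone(\ov{a})\to\ov{\mathcal{E}}^{\prime\prime\ast}[1]\to 0$ and $0\to\ov{\mathcal{E}}^{\prime\ast}\to\ocone(\ov{b})\to\ov{\mathcal{E}}^{\prime\prime\ast}[1]\to 0$. By the additivity of the Gillet-Soul\'e class $\widehat{\varphi}$ on short exact sequences \cite{GilletSoule:vbhm}, whose secondary contribution vanishes on orthogonally split sequences, a routine sign computation gives $\widehat{\varphi}(\ocone(\ov{a}))=\widehat{\varphi}(\ov{\mathcal{E}}^{\ast})-\widehat{\varphi}(\ov{\mathcal{E}}^{\prime\prime\ast})$ and $\widehat{\varphi}(\ocone(\ov{b}))=\widehat{\varphi}(\ov{\mathcal{E}}^{\prime\ast})-\widehat{\varphi}(\ov{\mathcal{E}}^{\prime\prime\ast})$. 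Since $\ocone(\ov{a})$ is meager, $\widehat{\varphi}(\ocone(\ov{a}))=0$ (by the structure of meager complexes, \cite[Thm. 2.13]{BurgosFreixasLitcanu:HerStruc}, and additivity of $\widehat{\varphi}$), hence $\widehat{\varphi}(\ov{\mathcal{E}}^{\ast})=\widehat{\varphi}(\ov{\mathcal{E}}^{\prime\prime\ast})$. Since $\ocone(\ov{b})$ is acyclic, $\widehat{\varphi}(\ocone(\ov{b}))=\amap(\widetilde{\varphi}(\ocone(\ov{b})))$, and via the identification of acyclic hermitian complexes with hermitian structures on the zero object the secondary class $\widetilde{\varphi}(\ocone(\ov{b}))$ is the image of $[\ocone(\ov{b})]\in\ov{\KA}(X)$ under $\widetilde{\varphi}$. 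Therefore
\[
  \widehat{\varphi}(\ov{\mathcal{E}}^{\prime\ast})-\widehat{\varphi}(\ov{\mathcal{E}}^{\ast})=\amap\big(\widetilde{\varphi}([\ocone(\ov{b})])\big).
\]

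It remains to identify $\widetilde{\varphi}(\ov{\psi})-\widetilde{\varphi}(\ov{\psi}')$ with $\widetilde{\varphi}([\ocone(\ov{b})])$, which is where the torsor formalism of \cite[Sec.~3 and 4]{BurgosFreixasLitcanu:HerStruc} enters. The lack-of-tightness class is additive along compositions, so $[\ov{\psi}\circ\ov{a}]=[\ov{\psi}]+[\ov{a}]$ and $[\ov{\psi}'\circ\ov{b}]=[\ov{\psi}']+[\ov{b}]$ in $\ov{\KA}(X)$; moreover $[\ov{a}]=0$ (its cone is meager), $[\ov{b}]=[\ocone(\ov{b})]$, and $[\ov{\psi}\circ\ov{a}]=[\ov{\psi}'\circ\ov{b}]$ since a hermitian quasi-isomorphism with prescribed source and target has lack-of-tightness class depending only on its underlying morphism. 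Hence $[\ov{\psi}]-[\ov{\psi}']=[\ocone(\ov{b})]$, and since $\widetilde{\varphi}\colon\ov{\KA}(X)\to\bigoplus_{p}\widetilde{\mathcal{D}}_{\as}^{2p-1}(X,p)$ is a homomorphism of groups \cite[Sec.~4]{BurgosFreixasLitcanu:HerStruc}, we get $\widetilde{\varphi}(\ov{\psi})-\widetilde{\varphi}(\ov{\psi}')=\widetilde{\varphi}([\ocone(\ov{b})])$. Combining with the previous display yields
\[
  \widehat{\varphi}(\ov{\mathcal{E}}^{\ast})+\amap(\widetilde{\varphi}(\ov{\psi}))=\widehat{\varphi}(\ov{\mathcal{E}}^{\prime\ast})+\amap(\widetilde{\varphi}(\ov{\psi}')),
\]
which is the assertion. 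I expect the main obstacle to be precisely this last paragraph: correctly matching the non-tightness classes of the auxiliary quasi-isomorphisms through the roof with the class of the acyclic cone $\ocone(\ov{b})$ in $\ov{\KA}(X)$, and keeping track of the shift and cone signs; the remainder parallels the argument already given for the class functor $\hDb(\mathcal{X},C)\to\widehat{K}_{0}(\mathcal{X},C)$.
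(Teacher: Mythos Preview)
Your proof is correct and follows essentially the same route as the paper: construct a roof $\ov{\mathcal{E}}^{\ast}\xleftarrow{\ov{a}}\ov{\mathcal{E}}^{\prime\prime\ast}\xrightarrow{\ov{b}}\ov{\mathcal{E}}^{\prime\ast}$ over the two resolutions, use the orthogonally split cone sequences to relate the $\widehat{\varphi}$-classes, and then identify the discrepancy with the difference of non-tightness classes via additivity of $\widetilde{\varphi}$ on $\ov{\KA}(X)$. The only cosmetic difference is that the paper treats both cones symmetrically as merely acyclic (so that $\widehat{\varphi}(\ocone(\ov{a}))=\amap(\widetilde{\varphi}(\ov{a}))$ and likewise for $\ov{b}$), arriving at $\widehat{\varphi}(\ov{\mathcal{E}}^{\ast})-\widehat{\varphi}(\ov{\mathcal{E}}^{\prime\ast})=\amap(\widetilde{\varphi}(\ov{a}\circ\ov{b}^{-1}))$ and then invoking $\ov{a}\circ\ov{b}^{-1}=\ov{\psi}^{-1}\circ\ov{\psi}'$ from the commutativity of the roof; you instead exploit the meager property of $\ocone(\ov{a})$ (as in the earlier class-functor argument) to kill one side outright and work with $[\ov{\psi}\circ\ov{a}]=[\ov{\psi}'\circ\ov{b}]$ directly. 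Both are equivalent unpackings of \cite[Prop.~4.13]{BurgosFreixasLitcanu:HerStruc}.
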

\begin{proof}
Let
$\psi^{\prime}\colon\mathcal{E}^{\prime\ast}\dashrightarrow\mathcal{F}^{\ast}$
be another finite locally free resolution, and choose arbitrary
metrics on the $\mathcal{E}^{\prime i}$. We can construct a
commutative diagram of complexes in $\Db(\mathcal{X})$
\begin{equation}\label{eq:27}
  \xymatrix{
    &\mathcal{E}^{\prime\prime\ast}\ar[ld]_{\alpha}\ar[rd]^{\beta}
    &\\
    \mathcal{E}^{\ast}\ar@{-->}[d]_{\psi}	&
    &\mathcal{E}^{\prime\ast}\ar@{-->}[d]^{\psi^{\prime}}\\ 
    \mathcal{F}^{\ast}\ar[rr]^{\Id}	&	&\mathcal{F}^{\ast}, 
  }
\end{equation}
where $\mathcal{E}^{\prime\prime}$ is also a finite complex of locally
free sheaves, that we endow with smooth hermitian metrics, and
$\alpha$, $\beta$ are quasi-isomorphisms. Because the exact sequences
\begin{align*}
	&0\rightarrow\ov{\mathcal{E}}^{\prime\ast}\rightarrow\ocone(\alpha)\rightarrow\ov{\mathcal{E}}^{\prime\prime\ast}[1]\rightarrow 0,\\
	&0\rightarrow\ov{\mathcal{E}}^{\ast}\rightarrow\ocone(\beta)\rightarrow\ov{\mathcal{E}}^{\prime\prime\ast}[1]\rightarrow 0
\end{align*}
have orhtogonally split constituent rows, we find
\begin{align}
	&\widehat{\varphi}(\ocone(\alpha))=\widehat{\varphi}(\ov{\mathcal{E}}^{\ast})-\widehat{\varphi}(\ov{\mathcal{E}}^{\prime\prime\ast}),\\ \label{eq:23}
	& \widehat{\varphi}(\ocone(\beta))=\widehat{\varphi}(\ov{\mathcal{E}}^{\prime\ast})-\widehat{\varphi}(\ov{\mathcal{E}}^{\prime\prime\ast}).
\end{align}
Also, the complexes $\cone(\alpha)$ and $\cone(\beta)$ are acyclic, so that
\begin{align}
	&\widehat{\varphi}(\ocone(\alpha))=\amap(\widetilde{\varphi}(\ocone(\alpha)))=\amap(\widetilde{\varphi}(\ov{\alpha})),\\
	&\widehat{\varphi}(\ocone(\beta))=\amap(\widetilde{\varphi}(\ocone(\beta)))=\amap(\widetilde{\varphi}(\ov{\beta})), \label{eq:26}
\end{align}
where we took into account the very definition of the class of an isomorphism in $\oDb(X)$. From the relations \eqref{eq:23}--\eqref{eq:26} we derive
\begin{equation}\label{eq:28}
	\widehat{\varphi}(\ov{\mathcal{E}}^{\ast})-\widehat{\varphi}(\ov{\mathcal{E}}^{\prime\ast})
	=\amap(\widetilde{\varphi}(\ov{\alpha})-\widetilde{\varphi}(\ov{\beta}))=\amap(\widetilde{\varphi}(\ov{\alpha}\circ\ov{\beta}^{-1})),
\end{equation}
where we plugged $\widetilde{\varphi}(\ov{\alpha}\circ\ov{\beta}^{-1})=\widetilde{\varphi}(\ov{\alpha})-\widetilde{\varphi}(\ov{\beta})$ \cite[Prop. 4.13]{BurgosFreixasLitcanu:HerStruc}. But by diagram \eqref{eq:27} we have $\ov{\alpha}\circ\ov{\beta}^{-1}=\ov{\psi}^{-1}\circ\ov{\psi}^{\prime}$.  This fact combined with \eqref{eq:28} implies
\begin{displaymath}
	\begin{split}
	\widehat{\varphi}(\ov{\mathcal{E}}^{\ast})-\widehat{\varphi}(\ov{\mathcal{E}}^{\prime\ast})
	=&\amap(\widetilde{\varphi}(\ov{\alpha})-\widetilde{\varphi}(\ov{\beta}))\\
	&=\amap(\widetilde{\varphi}(\ov{\psi}^{-1}\circ\ov{\psi}^{\prime}))\\
	&\hspace{0.4cm}=\amap(\widetilde{\varphi}(\ov{\psi}^{\prime}))-\amap(\widetilde{\varphi}(\ov{\psi})).
	\end{split}
\end{displaymath}
This completes the proof of the lemma.
\end{proof}
\begin{definition}
The notations being as above, we define
\begin{displaymath}
  \widehat{\varphi}(\ov{\mathcal{F}}^{\ast}):=
  \widehat{\varphi}(\ov{\mathcal{E}}^{\ast})+\amap(\widetilde{\varphi}(\ov{\psi}))
  \in\bigoplus_{p}\cha^{p}_{B}(\mathcal{X},\mathcal{D}_{\as}(X)),
\end{displaymath}
\end{definition}
The additive arithmetic characteristic classes are indeed additive with respect to direct sum, and are compatible with pull-back by morphisms of arithmetic varieties. However, the most important property of additive arithmetic characteristic classes is the behavior with respect to distinguished triangles. The reader may review \cite[Def. 3.29, Thm. 3.33, Def. 4.17, Thm. 4.18]{BurgosFreixasLitcanu:HerStruc} for definitions and main properties, especially for the class in $\ov{\KA}(X)$ and secondary class of a distinguished triangle.
\begin{theorem}\label{thm:3}
Let us consider a distinguished triangle in $\oDb(\mathcal{X})$:
\begin{displaymath}
	\ov{\tau}\colon\quad \ov{\mathcal{F}}^{\ast}\dashrightarrow\ov{\mathcal{G}}^{\ast}\dashrightarrow\ov{\mathcal{H}}^{\ast}
	\dashrightarrow\ov{\mathcal{F}}^{\ast}_{0}[1].
\end{displaymath}
Then we have
\begin{displaymath}
	\widehat{\varphi}(\mathcal{F}^{\ast})-\widehat{\varphi}(\mathcal{G}^{\ast})+\widehat{\varphi}(\mathcal{H}^{\ast})
	=\amap(\widetilde{\varphi}(\ov{\tau}))
\end{displaymath}
in $\bigoplus_{p}\cha^{p}_{B}(\mathcal{X},\mathcal{D}_{\as}(X))$. In particular, if $\ov{\tau}$ is tightly distinguished, we have
\begin{displaymath}
	\widehat{\varphi}(\ov{\mathcal{G}}^{\ast})=\widehat{\varphi}(\ov{\mathcal{F}}^{\ast})+\widehat{\varphi}(\ov{\mathcal{H}}^{\ast}).
\end{displaymath}
\end{theorem}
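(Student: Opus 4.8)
The plan is to reduce the identity to a degreewise split short exact sequence of complexes of hermitian locally free sheaves on $\mathcal{X}$, to apply the classical Gillet--Soul\'e additivity of $\widehat{\varphi}$ term by term, and then to recognize the archimedean remainder as $\widetilde{\varphi}(\ov{\tau})$ by means of the corresponding identity for the secondary class $\widetilde{\varphi}$ on $X$ established in \cite{BurgosFreixasLitcanu:HerStruc}.

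First I would choose locally free resolutions adapted to the triangle. Since $\mathcal{X}$ is regular, pick a quasi-isomorphism $\psi_{G}\colon\mathcal{E}_{G}^{\ast}\dashrightarrow\mathcal{G}^{\ast}$ from a bounded complex of locally free sheaves. By the standard lifting of morphisms and cones in the derived category (compare \cite[Lemma 3.5]{BurgosFreixasLitcanu:HerStruc}), after adjusting $\mathcal{E}_{G}^{\ast}$ up to quasi-isomorphism the first arrow of $\tau$ is represented by a genuine morphism of complexes $u\colon\mathcal{E}_{F}^{\ast}\to\mathcal{E}_{G}^{\ast}$ with $\mathcal{E}_{F}^{\ast}$ bounded and locally free, and $\mathcal{H}^{\ast}$ is quasi-isomorphic to $\cone(u)=:\mathcal{E}_{H}^{\ast}$. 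Endowing every term with an $F_{\infty}$-invariant smooth hermitian metric produces a short exact sequence of complexes of hermitian locally free sheaves whose rows are degreewise split, namely (up to the shift and sign conventions) exhibiting $\ov{\mathcal{E}}_{H}^{\ast}$ as an extension of $\ov{\mathcal{E}}_{F}^{\ast}[1]$ by $\ov{\mathcal{E}}_{G}^{\ast}$, together with hermitian quasi-isomorphisms $\ov{\psi}_{F}$, $\ov{\psi}_{G}$, $\ov{\psi}_{H}$ onto the vertices of $\ov{\tau}$. By the definition of $\widehat{\varphi}$ on $\oDb(\mathcal{X})$ given just before the theorem, $\widehat{\varphi}(\ov{\mathcal{F}}^{\ast})=\widehat{\varphi}(\ov{\mathcal{E}}_{F}^{\ast})+\amap(\widetilde{\varphi}(\ov{\psi}_{F}))$ and likewise for $\mathcal{G}$ and $\mathcal{H}$, so that
\begin{displaymath}
  \widehat{\varphi}(\ov{\mathcal{F}}^{\ast})-\widehat{\varphi}(\ov{\mathcal{G}}^{\ast})+\widehat{\varphi}(\ov{\mathcal{H}}^{\ast})
  =\big(\widehat{\varphi}(\ov{\mathcal{E}}_{F}^{\ast})-\widehat{\varphi}(\ov{\mathcal{E}}_{G}^{\ast})+\widehat{\varphi}(\ov{\mathcal{E}}_{H}^{\ast})\big)
  +\amap\big(\widetilde{\varphi}(\ov{\psi}_{F})-\widetilde{\varphi}(\ov{\psi}_{G})+\widetilde{\varphi}(\ov{\psi}_{H})\big).
\end{displaymath}

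Second, each degree of the chosen short exact sequence is an exact sequence $\ov{\varepsilon}_{i}$ of hermitian vector bundles on $\mathcal{X}$, so the Gillet--Soul\'e additivity recalled at the beginning of this section applies termwise; taking the alternating sum over $i$, using $\widehat{\varphi}(\ov{\mathcal{E}}^{\ast})=\sum_{i}(-1)^{i}\widehat{\varphi}(\ov{\mathcal{E}}^{i})$ together with the bookkeeping of the degree shift, one obtains $\widehat{\varphi}(\ov{\mathcal{E}}_{F}^{\ast})-\widehat{\varphi}(\ov{\mathcal{E}}_{G}^{\ast})+\widehat{\varphi}(\ov{\mathcal{E}}_{H}^{\ast})=\amap(\xi)$, where $\xi\in\bigoplus_{p}\widetilde{\mathcal{D}}_{\as}^{2p-1}(X,p)$ is an explicit alternating combination of the Bott--Chern classes $\widetilde{\varphi}(\ov{\varepsilon}_{i})$. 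Substituting into the display above, the left-hand side of the theorem equals $\amap\big(\xi+\widetilde{\varphi}(\ov{\psi}_{F})-\widetilde{\varphi}(\ov{\psi}_{G})+\widetilde{\varphi}(\ov{\psi}_{H})\big)$.

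It then remains to prove the identity $\xi+\widetilde{\varphi}(\ov{\psi}_{F})-\widetilde{\varphi}(\ov{\psi}_{G})+\widetilde{\varphi}(\ov{\psi}_{H})=\widetilde{\varphi}(\ov{\tau})$ in $\bigoplus_{p}\widetilde{\mathcal{D}}_{\as}^{2p-1}(X,p)$. This is exactly the additivity of the secondary class along distinguished triangles on the complex manifold $X$, i.e.\ \cite[Def.\ 4.17, Thm.\ 4.18]{BurgosFreixasLitcanu:HerStruc} applied to the additive genus $\varphi$: the class $\widetilde{\varphi}(\ov{\tau})$ is by definition computed from any representative by a degreewise split exact sequence of hermitian complexes, corrected by the Bott--Chern classes of the comparison isomorphisms, and the well-definedness rests on the additivity and composition rules for $\widetilde{\varphi}$ on $\ov{\KA}(X)$ (\cite[Prop.\ 4.13]{BurgosFreixasLitcanu:HerStruc}). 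Granting this, we get $\widehat{\varphi}(\ov{\mathcal{F}}^{\ast})-\widehat{\varphi}(\ov{\mathcal{G}}^{\ast})+\widehat{\varphi}(\ov{\mathcal{H}}^{\ast})=\amap(\widetilde{\varphi}(\ov{\tau}))$. The main obstacle is precisely this last matching, namely checking that the correction terms coming from the choice of resolutions and from the non-tightness of the comparison isomorphisms assemble without residue into $\widetilde{\varphi}(\ov{\tau})$ rather than differing from it by an extra class; once the forms-level identity on $X$ is available this becomes automatic, since the whole discrepancy lies in the image of $\amap$. Finally, the \emph{in particular} clause is immediate: if $\ov{\tau}$ is tightly distinguished then $\widetilde{\varphi}(\ov{\tau})=0$ by \cite[Thm.\ 4.18]{BurgosFreixasLitcanu:HerStruc}, hence $\amap(\widetilde{\varphi}(\ov{\tau}))=0$ and $\widehat{\varphi}(\ov{\mathcal{G}}^{\ast})=\widehat{\varphi}(\ov{\mathcal{F}}^{\ast})+\widehat{\varphi}(\ov{\mathcal{H}}^{\ast})$.
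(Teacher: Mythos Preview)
Your approach is essentially the same as the paper's: lift $\ov{\tau}$ to a cone triangle $\ov{\eta}$ of bounded locally free complexes, use the definition of $\widehat{\varphi}$ on $\oDb(\mathcal{X})$ to reduce to the locally free model plus correction terms $\widetilde{\varphi}(\ov{\psi}_{\bullet})$, and then invoke the results of \cite{BurgosFreixasLitcanu:HerStruc} on secondary classes of distinguished triangles to identify the remainder with $\widetilde{\varphi}(\ov{\tau})$.

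The paper makes one simplifying choice that you do not: it puts the \emph{orthogonal sum} metric on $\cone(\alpha)$, so that each row of the degreewise split exact sequence is orthogonally split and hence $\widehat{\varphi}(\ov{\mathcal{E}}_{F}^{\ast})-\widehat{\varphi}(\ov{\mathcal{E}}_{G}^{\ast})+\widehat{\varphi}(\ocone(\alpha))=0$ directly, with no residual $\xi$. The model triangle $\ov{\eta}$ is then tightly distinguished, so $\widetilde{\varphi}(\ov{\eta})=0$, and the matching of the archimedean remainder with $\widetilde{\varphi}(\ov{\tau})$ is an immediate application of \cite[Thm.~3.33~(vii)]{BurgosFreixasLitcanu:HerStruc}, which gives $\widetilde{\varphi}(\ov{\psi}_{F})-\widetilde{\varphi}(\ov{\psi}_{G})+\widetilde{\varphi}(\ov{\psi}_{H})=\widetilde{\varphi}(\ov{\tau})-\widetilde{\varphi}(\ov{\eta})$. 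Your version carries the extra term $\xi$ and then has to argue that $\xi$ is precisely $\widetilde{\varphi}(\ov{\eta})$ for the non-orthogonal model; this is true and follows from the same circle of results, but the orthogonal choice makes the bookkeeping disappear and makes the citation to \cite{BurgosFreixasLitcanu:HerStruc} a one-liner rather than a matching argument.
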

\begin{proof}
It is possible to find a diagram
\begin{displaymath}
	\xymatrix{
		\ov{\eta}\colon  &\mathcal{E}^{\ast}\ar@{-->}[d]^{f}\ar[r]^{\alpha}		&\mathcal{E}^{\prime\ast}\ar@{-->}[d]^{g}\ar[r]	&\cone(\alpha)\ar@{-->}[d]^{h}\ar[r]	&\mathcal{E}^{\ast}[1]\ar@{-->}[d]^{f[1]}\\
		\ov{\tau}\colon	&\mathcal{F}^{\ast}\ar@{-->}[r]	&\mathcal{G}^{\ast}\ar@{-->}[r]	&\mathcal{H}^{\ast}\ar@{-->}[r]	&\mathcal{F}^{\ast}[1],
	}
\end{displaymath}
where $\mathcal{E}^{\ast}$, $\mathcal{E}^{\prime\ast}$ are bounded complexes of locally free sheaves and the vertical arrows are isomorphisms in $\Db(\mathcal{X})$. We choose arbitrary smooth hermitian metrics on the $\mathcal{E}^{i}$, $\mathcal{E}^{\prime j}$, and put the orthogonal sum metric on $\cone(\alpha)$. Then, by construction of the arithmetic characteristic classes, we have
\begin{displaymath}
	\begin{split}
		\widehat{\varphi}(\ov{\mathcal{F}}^{\ast})-\widehat{\varphi}(\ov{\mathcal{G}}^{\ast})
		+\widehat{\varphi}(\ov{\mathcal{H}}^{\ast})=&
		\widehat{\varphi}(\ov{\mathcal{E}}^{\ast})-\widehat{\varphi}(\ov{\mathcal{E}}^{\prime\ast})
		+\widehat{\varphi}(\ocone(\alpha))\\
		&+a(\widetilde{\varphi}(\ov{f})-\widetilde{\varphi}(\ov{g})+\widetilde{\varphi}(\ov{h})).
	\end{split}
\end{displaymath}
Because the exact sequence
\begin{displaymath}
	0\rightarrow\ov{\mathcal{E}}^{\prime\ast}\rightarrow\ocone(\alpha)\rightarrow\ov{\mathcal{E}}^{\ast}[1]\rightarrow 0
\end{displaymath}
has orthogonally split constituent rows, we observe
\begin{displaymath}
	\widehat{\varphi}(\ov{\mathcal{E}}^{\ast})-\widehat{\varphi}(\ov{\mathcal{E}}^{\prime\ast})
		+\widehat{\varphi}(\ocone(\alpha))=0.
\end{displaymath}
Moreover, by \cite[Thm 3.33 (vii)]{BurgosFreixasLitcanu:HerStruc} the equality
\begin{displaymath}
	\widetilde{\varphi}(\ov{f})-\widetilde{\varphi}(\ov{g})+\widetilde{\varphi}(\ov{h})
	=\widetilde{\varphi}(\ov{\tau})-\widetilde{\varphi}(\ov{\eta}),
\end{displaymath}
holds, and $\widetilde{\varphi}(\ov{\eta})=0$ since $\ov{\eta}$ is tightly distinguished. The theorem now follows.
\end{proof}
We may also say a few words on multiplicative arithmetic
characteristic classes. We follow the discussion in
\cite[Sec. 5]{BurgosFreixasLitcanu:HerStruc}. Assume from now on that $\QQ\subset B$. Let $\psi\in B[[x]]$ be a formal power series with 
$\psi^{0}=1$. We denote also by $\psi$ the associated multiplicative
genus. 
Then
\begin{displaymath}
	\varphi=\log(\psi)\in B[[x]]
\end{displaymath}
defines an additive genus, to which we can associate an addtive
arithmetic characteristic genus $\widehat{\varphi}$. Then, given an
object $\ov{\mathcal{F}}^{\ast}$ in $\oDb(\mathcal{X})$, we have a
well defined class 
\begin{displaymath}
  \widehat{\psi}_{m}(\ov{\mathcal{F}}^{\ast}):=\exp(\widehat{\varphi}(\ov{\mathcal{F}}^{\ast}))
  \in\bigoplus_{p}\cha^{p}_{B}(\mathcal{X},\mathcal{D}_{\as}(X)).
\end{displaymath}
Observe this construction uses the ring structure of
$\bigoplus_{p}\cha^{p}_{B}(\mathcal{X},\mathcal{D}_{\as}(X))$, hence
the regularity of $\mathcal{X}$ and the product structure of
$\mathcal{D}_{\as}(X)$. In case $\psi$ has rational coefficients,
$\widehat{\psi}(\ov{\mathcal{F}}^{\ast})$ takes values in  
$\bigoplus_{p}\cha^{p}_{\QQ}(\mathcal{X},\mathcal{D}_{\as}(X))$. We
also recall that there is a multiplicative secondary class associated
to $\psi$, denoted $\widetilde{\psi}_{m}$, that can be expressed in
terms of $\widetilde{\varphi}$:
\begin{displaymath}
  \widetilde{\psi}_{m}(\theta)=\frac{\exp(\varphi(\theta))-1}{\varphi(\theta)}\widetilde{\varphi}(\theta),
\end{displaymath}
where $\theta$ is any class in $\ov{\KA}(X)$. If $\ov{\tau}$ is a distinguished triangle in $\oDb(\mathcal{X})$, we will simply write $\widetilde{\psi}_{m}(\ov{\tau})$ instead of $\widetilde{\psi}_{m}([\ov{\tau}])$.

\begin{theorem}\label{thm:4}
Let $\psi$ be a multiplicative genus, with degree 0 component
$\psi^{0}=1$. Then, for every distinguished triangle in
$\oDb(\mathcal{X})$ 
\begin{displaymath}
	\ov{\tau}\colon	\ov{\mathcal{F}}^{\ast}\dashrightarrow\ov{\mathcal{G}}^{\ast}\dashrightarrow\ov{\mathcal{H}}^{\ast}
	\dashrightarrow\ov{\mathcal{F}}[1]
\end{displaymath}
we have the relation
\begin{displaymath}
	\widehat{\psi}(\ov{\mathcal{F}}^{\ast})^{-1}\widehat{\psi}(\ov{\mathcal{G}})\widehat{\psi}(\ov{\mathcal{H}})^{-1}-1=
	\amap(\widetilde{\psi}_{m}(\ov{\tau})).
\end{displaymath}
In particular, if $\ov{\tau}$ is tightly distinguished, the equality
\begin{displaymath}
	\widehat{\psi}(\ov{\mathcal{G}}^{\ast})=\widehat{\psi}(\ov{\mathcal{F}}^{\ast})\widehat{\psi}(\ov{\mathcal{H}}^{\ast})
\end{displaymath}
holds.
\end{theorem}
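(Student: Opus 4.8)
The plan is to deduce the statement from the additive case (Theorem \ref{thm:3}) together with the commutative ring structure on $\bigoplus_{p}\cha^{p}_{B}(\mathcal{X},\mathcal{D}_{\as}(X))$ provided by Proposition \ref{prop:2}. First I would set $\varphi=\log\psi\in B[[x]]$, so that by construction $\widehat{\psi}(\ov{\mathcal{F}}^{\ast})=\exp(\widehat{\varphi}(\ov{\mathcal{F}}^{\ast}))$, and similarly for $\ov{\mathcal{G}}^{\ast}$ and $\ov{\mathcal{H}}^{\ast}$. Since $\psi^{0}=1$ forces $\varphi^{0}=0$, each additive class $\widehat{\varphi}(-)$ lies in $\bigoplus_{p\geq 1}\cha^{p}_{B}(\mathcal{X},\mathcal{D}_{\as}(X))$, hence is nilpotent in every fixed total degree, so the exponentials and the inverses $\widehat{\psi}(-)^{-1}$ are well defined. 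Using commutativity of the ring one rewrites
\begin{displaymath}
  \widehat{\psi}(\ov{\mathcal{F}}^{\ast})^{-1}\widehat{\psi}(\ov{\mathcal{G}}^{\ast})\widehat{\psi}(\ov{\mathcal{H}}^{\ast})^{-1}
  =\exp\bigl(\widehat{\varphi}(\ov{\mathcal{G}}^{\ast})-\widehat{\varphi}(\ov{\mathcal{F}}^{\ast})-\widehat{\varphi}(\ov{\mathcal{H}}^{\ast})\bigr),
\end{displaymath}
and Theorem \ref{thm:3} identifies the exponent with $-\amap(\widetilde{\varphi}(\ov{\tau}))$. At this point the whole statement is reduced to the single identity
\begin{displaymath}
  \exp\bigl(-\amap(\widetilde{\varphi}(\ov{\tau}))\bigr)-1=\amap(\widetilde{\psi}_{m}(\ov{\tau}))
\end{displaymath}
in $\bigoplus_{p}\cha^{p}_{B}(\mathcal{X},\mathcal{D}_{\as}(X))$.

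The heart of the argument is a power rule for classes in the image of $\amap$. Applying the product formula \eqref{product} to two factors of the form $\amap(u)=((0,0),-u)$ and $\amap(v)=((0,0),-v)$ — whose underlying $\mathcal{D}_{\log}$-cycle is the zero cycle with zero Green form, so that the terms $c\bullet\cmap(\omega(g'))$ and $\cmap(\omega(g))\bullet c'$ in \eqref{product} vanish — one gets $\amap(u)\cdot\amap(v)=((0,0),\dd_{C}(-u)\bullet(-v))=\amap(-\dd_{C}u\bullet v)$, and inductively
\begin{displaymath}
  \amap(u)^{n}=\amap\bigl((-\dd_{C}u)^{n-1}\bullet u\bigr)\qquad(n\geq 1),
\end{displaymath}
the powers being taken with respect to $\bullet$. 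Summing the exponential series, which is a finite sum in each total degree, I would obtain
\begin{displaymath}
  \exp(\amap(u))-1=\amap\Bigl(\frac{\exp(-\dd_{C}u)-1}{-\dd_{C}u}\bullet u\Bigr)
\end{displaymath}
for every $u\in\bigoplus_{p}\widetilde{C}^{2p-1}(p)$. Specializing to $u=-\widetilde{\varphi}(\ov{\tau})$ and using that the curvature $\dd_{C}\widetilde{\varphi}(\ov{\tau})$ of the Bott--Chern secondary class attached to the additive genus $\varphi$ and the element $[\ov{\tau}]\in\ov{\KA}(X)$ is the corresponding genus form $\varphi([\ov{\tau}])$, the expression on the right becomes exactly the one occurring in the definition of the multiplicative secondary class, namely $\tfrac{\exp(\varphi([\ov{\tau}]))-1}{\varphi([\ov{\tau}])}\,\widetilde{\varphi}(\ov{\tau})=\widetilde{\psi}_{m}(\ov{\tau})$; this yields the displayed identity, and hence the first formula of the theorem.

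For the last assertion one simply notes that if $\ov{\tau}$ is tightly distinguished then $[\ov{\tau}]=0$ in $\ov{\KA}(X)$, so $\widetilde{\varphi}(\ov{\tau})=0$ and $\widetilde{\psi}_{m}(\ov{\tau})=0$; the formula just proved then collapses to $\widehat{\psi}(\ov{\mathcal{F}}^{\ast})^{-1}\widehat{\psi}(\ov{\mathcal{G}}^{\ast})\widehat{\psi}(\ov{\mathcal{H}}^{\ast})^{-1}=1$, which is the asserted multiplicativity. I expect the only delicate point to be the sign and normalization bookkeeping — both in the power rule for $\amap$ coming from \eqref{product} and in matching $\dd_{C}\widetilde{\varphi}(\ov{\tau})$ with the genus form $\varphi([\ov{\tau}])$ — which forces one to be scrupulous about the conventions of \cite{BurgosFreixasLitcanu:HerStruc} for the secondary characteristic classes and of Proposition \ref{prop:2} for the product $\bullet$. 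Everything else is formal power series manipulation inside the graded ring.
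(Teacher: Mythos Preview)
Your proposal is correct and follows essentially the same route as the paper: exponentiate the additive relation of Theorem~\ref{thm:3} and then identify $\exp(\amap(\widetilde{\varphi}(\ov{\tau})))-1$ with $\amap(\widetilde{\psi}_{m}(\ov{\tau}))$ via the multiplicative structure. The paper's proof is in fact a one-line ``observe that'' for this last identification, whereas you unpack it explicitly through the power rule $\amap(u)^{n}=\amap\bigl((-\dd_{C}u)^{n-1}\bullet u\bigr)$; your caveat about sign bookkeeping is well placed, since this is indeed the only point requiring care (and the paper's own displayed formula has a visible misprint in the denominator).
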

\begin{proof}
It is enough to exponentiate the relation provided by Theorem \ref{thm:3} and observe that, due to the mutliplicative law in $\bigoplus_{p}\cha^{p}(\mathcal{X},\mathcal{D}_{\as}(X))$, one has
\begin{displaymath}
	\exp(\amap(\widetilde{\varphi}(\ov{\tau})))=1+\amap\left(\frac{\exp(\varphi(\ov{\tau}))-1}{\exp(\varphi(\ov{\tau}))}\widetilde{\varphi}(\ov{\tau})\right).
\end{displaymath}
\end{proof}
\begin{example}
\begin{enumerate}
\item The arithmetic Chern class, is the additive class
  attached to the additive genus $\ch(x)=e^{x}$. It coincides
  with the character $\chh$ of the previous section. 
\item The arithmetic Todd class, is the multiplicative class attached to the genus
  \begin{displaymath}
    \Td(x)=\frac{x}{1-e^{-x}}.
  \end{displaymath}
  Observe that the formal series of $\Td(x)$ has constant coefficient 1.
\end{enumerate}
\end{example}
\begin{remark}
If $C$ is a $\mathcal{D}_{\as}(X)$-complex, we can define arithmetic
characteristic classes with values in
$\bigoplus_{p}\cha^{p}_{B}(\mathcal{X},C)$, just taking their image by the
natural morphism
$\bigoplus_{p}\cha^{p}_{B}(\mathcal{X},\mathcal{D}_{\as}(X))\rightarrow
\bigoplus_{p}\cha^{p}_{B}(\mathcal{X},C)$. We will use the same
notations to refer to these classes.
\end{remark}

\section{Direct images and generalized analytic torsion}\label{section:DirectImage}
In the preceding section we introduced the arithmetic $K$-groups and
the arithmetic derived categories. They satisfy elementary
functoriality properties and are related by a class functor. A missing
functoriality is the push-forward by \emph{arbitrary} projectives
morphisms of arithmetic varieties. Similarly to the work of
Gillet-R\"ossler-Soul\'e
\cite{GilletRoesslerSoule:_arith_rieman_roch_theor_in_higher_degrees},
we will define direct images after choosing a generalized analytic
torsion theory in the sense of
\cite{BurgosFreixasLitcanu:GenAnTor}. Our theory is more general in
that we don't require our morphisms to be smooth over the generic
fiber. At the archimedean places, we are thus forced to work with
complexes of currents with controlled wave front sets. In this level
of generality, the theory of arithmetic Chow groups, arithmetic
$K$-theory and arithmetic derived categories has already been
discussed. Let us recall that a generalized analytic torsion theory is
not unique, but according to \cite[Thm. 7.7 and
Thm. 7.14]{BurgosFreixasLitcanu:GenAnTor} it is classified by a real
additive genus.

Our theory of generalized analytic torsion classes involves the notion of relative metrized complex \cite[Def. 2.5]{BurgosFreixasLitcanu:GenAnTor}. In the sequel we will need a variant on real smooth quasi-projective schemes $X=(X_{\CC},F_{\infty})$. With respect to \emph{loc. cit.}, this amounts to imposing an additional invariance under the action of $F_{\infty}$.

\begin{definition}
A real relative metrized complex is a triple $\ov{\xi}=(\ov{f}, \ov{\mathcal{F}}^{\ast},\ov{f_{\ast}\mathcal{F}^{\ast}})$, where
\begin{itemize}
	\item $\ov{f}:X\rightarrow Y$ is a projective morphism of real smooth quasi-projective varieties, together with a hermitian structure on the tangent complex $T_{f}$, invariant under the action of complex conjugation;
	\item $\ov{\mathcal{F}}^{\ast}$ is an object in $\oDb(X)$;
	\item $\ov{f_{\ast}\mathcal{F}}^{\ast}$ is an object in $\oDb(Y)$ lying over $f_{\ast}\mathcal{F}^{\ast}$.
\end{itemize}
\end{definition}
The following lemma is checked by a careful proof reading of the construction of generalized analytic torsion classes in \cite{BurgosFreixasLitcanu:GenAnTor}.
\begin{lemma}
Let $T$ be a theory of generalized analytic torsion classes. Then, for every real relative metrized complex $\ov{\xi}=(\ov{f}\colon X\to Y,\ov{\mathcal{F}}^{\ast},\ov{f_{\ast}\mathcal{F}}^{\ast})$, $T(\ov{\xi})$ is a class of real currents,
\begin{displaymath}
	T(\ov{\xi})\in\bigoplus_{p}\widetilde{\mathcal{D}}^{2p-1}_{\D,\as}(X,N_{f},p),
\end{displaymath}
where $N_{f}$ is the cone of normal directions to $f$.
\end{lemma}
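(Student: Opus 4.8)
The statement is a microlocal refinement of the construction of generalized analytic torsion classes in \cite{BurgosFreixasLitcanu:GenAnTor}, so the plan is to revisit that construction while keeping track of wave front sets at every step. The first move is to cut down the bookkeeping using the classification theorem \cite[Thm.~7.7 and Thm.~7.14]{BurgosFreixasLitcanu:GenAnTor}: any two theories of generalized analytic torsion classes $T$ and $T'$ differ, on a relative metrized complex $\ov\xi=(\ov f,\ov{\mathcal F}^\ast,\ov{f_\ast\mathcal F}^\ast)$, by the image under the fibre integral $f_\ast$ of an explicit \emph{smooth} characteristic form attached to a real additive genus depending only on $T$ and $T'$, to the metrized relative tangent complex $\ov{T_f}$, and to $\ch(\ov{\mathcal F}^\ast)$. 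By the wave front convention governing such direct images (used in Proposition~\ref{prop:8} and Corollary~\ref{cor:1}) --- applied with $S=\emptyset$, so that the push-forward set is $N_f$ --- this difference has wave front set in $N_f$ and thus represents a class in $\bigoplus_p\widetilde{\mathcal D}^{2p-1}_{\D,\as}(X,N_f,p)$. Hence it suffices to prove the lemma for one conveniently chosen theory, namely the one whose existence is established in \cite{BurgosFreixasLitcanu:GenAnTor} by the factorization method.

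For that theory I would reduce, by the axioms defining a theory of generalized analytic torsion classes --- additivity along distinguished triangles and the compatibility with compositions $X\to P\to Y$ --- to the factorization of an arbitrary projective $\ov f$ as $\ov f=\ov g\circ\ov\iota$, where $\iota\colon X\hookrightarrow P$ is a closed immersion into a projective bundle $P$ over $Y$, $g\colon P\to Y$ is the (smooth) structural projection, and the tangent complexes are metrized compatibly with $\ov{T_f}$. The composition formula then expresses $T(\ov\xi)$ as a $\ZZ$-linear combination, with coefficients given by Todd forms and secondary Bott-Chern classes that are all \emph{smooth}, of: (a) the Bismut-K\"ohler higher analytic torsion form of the K\"ahler fibration $g$ \cite{BismutKohler}, which is a smooth form and for which $N_g=\emptyset$; (b) the Bismut-Gillet-Soul\'e singular current of the closed immersion $\iota$, which is locally integrable, smooth away from $X$, and has a logarithmic-type singularity along $X$, so that its wave front set is contained in the conormal bundle $N^{\ast}_{X/P}$, which is precisely $N_\iota$; and (c) fibre integrals along $g$ of products of the current in (b) with smooth forms. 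Each ingredient therefore has wave front set either empty or contained in $N_\iota$, and it remains to propagate these bounds through multiplication by a smooth form, pull-back, and push-forward by $g$, using H\"ormander's calculus \cite[\S 8.2]{Hormander:MR1065993} in the form recalled in \cite[\S 4]{BurgosLitcanu:SingularBC}. The decisive point is that $g_\ast$ of a current with wave front set in $N_\iota$ has wave front set in $\{(g(y),\xi)\mid(y,\dd g^t\xi)\in N_\iota\}\cup N_g=N_{g\circ\iota}=N_f$, which is just the chain rule $\dd f^t=\dd\iota^t\circ\dd g^t$ together with $N_g=\emptyset$. One also has to check that passing to the class modulo $\dd_{\mathcal D}$ does not leave the subcomplex $\mathcal D^{\ast}_{\D,\as}(X,N_f,\ast)$; this holds because it is a $\dd_{\mathcal D}$-stable subcomplex and because of the Poincar\'e $\ov\partial$-lemma with fixed wave front set \cite[Thm.~4.5]{BurgosLitcanu:SingularBC}.

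The main obstacle is the analytic input behind (b)--(c): one must be sure that the \emph{limits} defining the Bismut-Gillet-Soul\'e current and the accompanying fibre integrals --- the $t\to0$ and $t\to\infty$ asymptotics of superconnection heat kernels, and their Mellin transforms --- do not create wave front directions transverse to $N_\iota$. Concretely, one needs the stability statement that if a family of smooth forms converges, with the uniform control supplied by the heat-kernel estimates, to a current whose singular support lies in $X$ and whose singularities are of conormal type, then the limit current has wave front set in $N^{\ast}_{X/P}$. This is precisely where the ``careful proof reading'' of \cite{BurgosFreixasLitcanu:GenAnTor} is genuinely required, and it is carried out along the lines of \cite[\S 4]{BurgosLitcanu:SingularBC}. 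Once this is granted, assembling the contributions with their wave front bounds and invoking the classification reduction of the first paragraph yields the membership asserted in the lemma.
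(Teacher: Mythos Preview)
Your proposal is essentially correct and, in fact, considerably more detailed than what the paper offers: the paper gives no proof at all, merely stating beforehand that ``the following lemma is checked by a careful proof reading of the construction of generalized analytic torsion classes in \cite{BurgosFreixasLitcanu:GenAnTor}.'' Your outline is a faithful unpacking of what that proof reading entails, and the structure you propose---reduce via the classification theorem to a single theory, factor $\ov f=\ov g\circ\ov\iota$, and then track wave front sets through the Bismut--K\"ohler forms, the singular Bott--Chern currents of the immersion, and the fibre integrals using H\"ormander's calculus---is exactly the skeleton of the construction in \cite{BurgosFreixasLitcanu:GenAnTor} and \cite{BurgosLitcanu:SingularBC}.

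Two minor remarks. First, the target of $T(\ov\xi)$ is a space of currents on the base $Y$, not on $X$; the ``$X$'' in the displayed statement is a slip in the paper (note that $N_f\subset T_0^\ast Y_{\CC}$, and $T(\ov\xi)$ is later used as an element of a $\mathcal{D}_{\as}(Y)$-complex in the definition of $\ov f_\ast$). You implicitly handle this correctly when you push forward along $g$. Second, your identification of the ``main obstacle''---that the limits defining the singular currents do not introduce wave front directions outside the conormal bundle---is well placed: this is indeed the substantive analytic point, and it is precisely what \cite[\S4]{BurgosLitcanu:SingularBC} supplies for the immersion currents. With that input granted, the rest is the formal H\"ormander bookkeeping you describe.
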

It will be useful to have an adaptation of the category
$\ov{\Sm}_{\ast/\CC}$ \cite[Sec. 5]{BurgosFreixasLitcanu:HerStruc} to
real quasi-projective schemes, that we denote
$\ov{\Sm}_{\ast/\RR}$. In this category, the objects are real smooth
quasi-projective schemes, and the morphisms are projective morphisms
with a hermitian structure invariant under complex conjugation. The
composition law is then described in \emph{loc. cit.}, Def. 5.7, with
the help of the hermitian cone construction. We will follow the
notation introduced in
\cite[Def. 2.12]{BurgosFreixasLitcanu:GenAnTor}. 
\begin{notation}
  Let $\ov{f}\colon X\to Y$ be in $\ov{\Sm}_{\ast/\RR}$, of pure
  relative dimension $e$, $C$ a $\mathcal{D}_{\as}(X)$-complex, $C'$ a
  $\mathcal{D}_{\as}(Y)$-complex and $f_{\ast}:C\rightarrow C'$ a
  morphism of fitting into the commutative diagram
  \eqref{eq:18}. Assume furthermore that $C$ is a
  $\mathcal{D}_{\as}(X)$-module with a product law $\bullet$ as in
  \eqref{eq:19}. Then we put
\begin{displaymath}
  \begin{split}
    \ov{f}_{\ttwist}:C^{\ast}(\ast)&\longrightarrow C^{\prime\ast-2e}(p-e)\\
    \eta&\longmapsto f_{\ast}(\eta\bullet \Td(T_{\ov{f}})).
  \end{split}
\end{displaymath}
This morphism induces a corresponding morphism on
$\widetilde{C}^{\ast}(\ast)$, for which we use the same notation. 
\end{notation}
For an arithmetic ring $A$, we introduce $\ov{\Reg}_{\ast/A}$ the
category of  quasi-proj\-ective regular arithmetic varieties
over $A$, with projective morphisms endowed (at the archimedean
places) with a hermitian structure invariant under complex
conjugation. By construction, there is a natural base change functor 
\begin{displaymath}
	\ov{\Reg}_{\ast/A}\longrightarrow\ov{\Sm}_{\ast/\RR}.
\end{displaymath}
Given $\ov{f}\colon\mathcal{X}\to\mathcal{Y}$ a morphism in $\ov{\Reg}_{\ast/A}$ and objects $\ov{\mathcal{F}}^{\ast}$, $\ov{f_{\ast}\mathcal{F}}^{\ast}$ in $\oDb(\mathcal{X})$ and $\oDb(\mathcal{Y})$, respectively, we may consider the corresponding real relative metrized complex, that we will abusively write $\ov{\xi}=(\ov{f},\ov{\mathcal{F}},\ov{f_{\ast}\mathcal{F}}^{\ast})$, and its analytic torsion class $T(\ov{\xi})$. We may also write $\ov{f}_{\ttwist}$ instead of $\ov{f}_{\CC\,\ttwist}$, etc. 

We are now in position to construct the arithmetic counterpart of \cite[Eq. (10.6)]{BurgosFreixasLitcanu:GenAnTor}, namely the direct image functor on arithmetic derived categories, as well as a similar push-forward on arithmetic $K$-theory.
\begin{definition}
Let $\ov{f}\colon\mathcal{X}\to\mathcal{Y}$ be a morphism in
$\ov{\Reg}_{\ast/A}$, $C$ a $\mathcal{D}_{\as}(X)$-complex, $C'$ a
  $\mathcal{D}_{\as}(Y)$-complex, both satisfying the
hypothesis (H1) and (H2), and $f_{\ast}:C\rightarrow C'$ a
  morphism fitting into a commutative diagram like
  \eqref{eq:18}.
  \begin{enumerate}
  \item  We define the functor
\begin{displaymath}
	\ov{f}_{\ast}\colon\hDb(\mathcal{X},C)\longrightarrow\hDb(\mathcal{Y},C'),
\end{displaymath}
acting on objects by the assignment
\begin{displaymath}
	[\ov{\mathcal{F}}^{\ast},\widetilde{\eta}]\longmapsto [\ov{f_{\ast}\mathcal{F}}^{\ast},\ov{f}_{\ttwist}(\widetilde{\eta})-c'(T(\ov{f},\ov{\mathcal{F}}^{\ast},\ov{f_{\ast}\mathcal{F}}^{\ast}))].
\end{displaymath}
Here $\ov{f_{\ast}\mathcal{F}}^{\ast}$ carries an arbitrary choice of hermitian structure. The action on morphisms of $\ov{f}_{\ast}$ is just the usual action $f_{\ast}$ on morphisms of $\Db(\mathcal{X})$.

\item  We define a morphism of groups
\begin{displaymath}
	\begin{split}
		f_{\ast}:\widehat{K}_{0}(\mathcal{X},C)&\longrightarrow\widehat{K}_{0}(\mathcal{X},C')\\
		[\ov{\mathcal{E}},\widetilde{\eta}]&\longmapsto [\sum_{i}(-1)^{i}\ov{\mathcal{E}}^{\prime i},\ov{f}_{\ttwist}(\widetilde{\eta})-T(\ov{f},\ov{\mathcal{E}},\ov{f_{\ast}\mathcal{E}}^{\ast})],
	\end{split}
\end{displaymath}
where we choose an arbitrary quasi-isomorphism
$\mathcal{E}^{\prime\ast}\dashrightarrow f_{\ast}\mathcal{E}$ and
arbitrary smooth hermitian metrics on the $\mathcal{E}^{\prime
  i}$. Here $f_{\ast}\mathcal{E}$ denotes the derived direct image of
the single locally free sheaf $\mathcal{E}$.

  \end{enumerate}
\end{definition}
Notice the previous definition makes sense by the anomaly formulas satisfied by analytic torsion theories \cite[Prop. 7.4]{BurgosFreixasLitcanu:GenAnTor}. 
Both push-forwards are compatible through the class map from arithmetic derived categories to arithmetic $K$-theory.
\begin{theorem}\label{thm:6}
There is a commutative diagram of functors
\begin{displaymath}
	\xymatrix{
		\hDb(\mathcal{X},C)\ar[r]^-{\ov{f}_{\ast}}\ar[d]	&\hDb(\mathcal{Y},C')\ar[d]\\
		\widehat{K}_{0}(\mathcal{X},C)\ar[r]	_-{\ov{f}_{\ast}}	&\widehat{K}_{0}(\mathcal{Y},C').
	}
\end{displaymath}
This is in particular true for $C=\mathcal{D}_{\D,\as}(X,S)$ and $C'=\mathcal{D}_{\D,\as}(Y,f_{\ast}(S))$, where $S\subset T_{0}^{\ast}X$ is a closed conical subset invariant under the action of complex conjugation.
\end{theorem}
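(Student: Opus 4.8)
The plan is to check the commutativity of the square on objects; on morphisms it then follows formally, since both composites act on a map $\varphi\colon A\to B$ of $\hDb(\mathcal{X},C)$ only through the classes of $A$ and $B$. By Remark~\ref{rem:1} an object of $\hDb(\mathcal{X},C)$ is represented by $(\ov{\mathcal{E}}^{\ast}\dashrightarrow\mathcal{F}^{\ast},\widetilde{\eta})$ with $\ov{\mathcal{E}}^{\ast}$ a bounded complex of hermitian locally free sheaves on $\mathcal{X}$, and its class in $\widehat{K}_{0}(\mathcal{X},C)$ is $[\sum_{i}(-1)^{i}\ov{\mathcal{E}}^{i},\widetilde{\eta}]$. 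Since $\mathcal{Y}$ is regular and $f$ is projective I would choose a hermitian locally free resolution $\ov{\mathcal{G}}^{\ast}\dashrightarrow f_{\ast}\mathcal{F}^{\ast}$ and use it as $\ov{f_{\ast}\mathcal{F}}^{\ast}$; then the image of $\ov{f}_{\ast}[\ov{\mathcal{F}}^{\ast},\widetilde{\eta}]$ in $\widehat{K}_{0}(\mathcal{Y},C')$ is $[\sum_{k}(-1)^{k}\ov{\mathcal{G}}^{k},\ \ov{f}_{\ttwist}(\widetilde{\eta})-c'(T(\ov{f},\ov{\mathcal{E}}^{\ast},\ov{\mathcal{G}}^{\ast}))]$. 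Writing $[\sum_{i}(-1)^{i}\ov{\mathcal{E}}^{i},\widetilde{\eta}]=[0,\widetilde{\eta}]+\sum_{i}(-1)^{i}[\ov{\mathcal{E}}^{i},0]$ and using $f_{\ast}[0,\widetilde{\eta}]=[0,\ov{f}_{\ttwist}(\widetilde{\eta})]$ (which rests on $T(\ov{f},0,0)=0$), the $\widetilde{\eta}$-contribution agrees on the two routes, so one is reduced to the identity
\[ \Bigl[\textstyle\sum_{k}(-1)^{k}\ov{\mathcal{G}}^{k},\,-c'(T(\ov{f},\ov{\mathcal{E}}^{\ast},\ov{\mathcal{G}}^{\ast}))\Bigr]=\sum_{i}(-1)^{i}\,f_{\ast}\bigl[\ov{\mathcal{E}}^{i},0\bigr]\ \text{ in }\widehat{K}_{0}(\mathcal{Y},C') \]
for a bounded complex of hermitian locally free sheaves $\ov{\mathcal{E}}^{\ast}$ on $\mathcal{X}$.

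For the right-hand side I would pick hermitian locally free resolutions $\ov{\mathcal{G}}_{i}^{\ast}\dashrightarrow f_{\ast}\mathcal{E}^{i}$, so that $f_{\ast}[\ov{\mathcal{E}}^{i},0]=[\sum_{j}(-1)^{j}\ov{\mathcal{G}}_{i}^{j},-T(\ov{f},\ov{\mathcal{E}}^{i},\ov{\mathcal{G}}_{i}^{\ast})]$. Expanding and using the defining relations of $\widehat{K}_{0}(\mathcal{Y},C')$ — an exact sequence $\ov{\varepsilon}$ on $\mathcal{Y}$ gives $[\ov{\mathcal{A}}_{1},0]+[\ov{\mathcal{A}}_{2},0]-[\ov{\mathcal{A}},0]=[0,c'(\widetilde{\ch}(\ov{\varepsilon}))]$, and together with Theorem~\ref{thm:3} this yields the corresponding formula for distinguished triangles and acyclic complexes — the displayed identity becomes the equality in $\bigoplus_{p}\widetilde{C'}^{2p-1}(p)$ of $c'(T(\ov{f},\ov{\mathcal{E}}^{\ast},\ov{\mathcal{G}}^{\ast}))-\sum_{i}(-1)^{i}T(\ov{f},\ov{\mathcal{E}}^{i},\ov{\mathcal{G}}_{i}^{\ast})$ with a combination of Bott--Chern secondary Chern characters on $\mathcal{Y}$ recording the passage from the $\ov{\mathcal{G}}_{i}^{\ast}$ to the single resolution $\ov{\mathcal{G}}^{\ast}$. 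I would produce that combination from the stupid-truncation filtration of $\mathcal{E}^{\ast}$: the distinguished triangles $\sigma_{\geq i+1}\ov{\mathcal{E}}^{\ast}\dashrightarrow\sigma_{\geq i}\ov{\mathcal{E}}^{\ast}\dashrightarrow\ov{\mathcal{E}}^{i}[-i]\dashrightarrow$ are tightly distinguished on $\mathcal{X}$ for the orthogonal direct-sum metric, so pushing them forward and applying the additivity axiom of a theory of generalized analytic torsion classes \cite{BurgosFreixasLitcanu:GenAnTor} writes $T(\ov{f},\ov{\mathcal{E}}^{\ast},\ov{\mathcal{G}}^{\ast})$ as $\sum_{i}(-1)^{i}T(\ov{f},\ov{\mathcal{E}}^{i},\ov{\mathcal{G}}_{i}^{\ast})$ plus precisely the secondary Chern characters of the pushed-forward triangles; read in $\oDb(\mathcal{Y})$, those same triangles are what the relations of $\widehat{K}_{0}(\mathcal{Y},C')$ use to pass from $\bigoplus_{i}\ov{\mathcal{G}}_{i}^{\ast}[-i]$ to $\ov{\mathcal{G}}^{\ast}$. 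The anomaly formulas \cite[Prop.~7.4]{BurgosFreixasLitcanu:GenAnTor} make both members independent of the auxiliary metric and resolution choices, so it suffices to match them for one convenient choice.

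The step I expect to be the main obstacle is exactly this matching: running the filtration argument with full control of the wave front sets of all currents that appear, and checking that the Bott--Chern terms produced by the additivity axiom of $T$ coincide term by term with those produced by the relations of $\widehat{K}_{0}(\mathcal{Y},C')$, which is where one leans on the compatibility of the secondary characters of \cite{BurgosFreixasLitcanu:HerStruc} with derived direct images and with the hermitian cone construction. Once the general statement is established, the particular case $C=\mathcal{D}_{\D,\as}(X,S)$, $C'=\mathcal{D}_{\D,\as}(Y,f_{\ast}(S))$ is immediate: these complexes satisfy (H1) and (H2) by Proposition~\ref{prop:1}, the map $f_{\ast}\colon C\to C'$ fits into the diagram \eqref{eq:18} because $f_{\ast}(S)$ contains by definition both the push-forward of $S$ along $f_{\CC}$ and the normal directions $N_{f}$, and the torsion class lies in $\bigoplus_{p}\widetilde{\mathcal{D}}^{2p-1}_{\D,\as}(Y,N_{f},p)\subseteq\bigoplus_{p}\widetilde{\mathcal{D}}^{2p-1}_{\D,\as}(Y,f_{\ast}(S),p)=\bigoplus_{p}\widetilde{C'}^{2p-1}(p)$ since $N_{f}\subseteq f_{\ast}(S)$, so every term above is defined in the required group.
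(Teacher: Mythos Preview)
Your approach is correct and essentially the same as the paper's: both reduce to comparing $T(\ov f,\ov{\mathcal E}^{\ast},\ov{f_{\ast}\mathcal E}^{\ast})$ with $\sum_i(-1)^i T(\ov f,\ov{\mathcal E}^{i},\ov{f_{\ast}\mathcal E}^{i})$ via the additivity of analytic torsion (what the paper cites as \cite[Prop.~7.6]{BurgosFreixasLitcanu:GenAnTor}). The only streamlining in the paper is that it fixes the ``convenient choice'' you allude to at the outset---taking $\ov{f_{\ast}\mathcal F}^{\ast}$ to be the hermitian structure built from the $\ov{\mathcal G}_i^{\ast}$ by the hermitian cone construction \cite[Def.~3.39]{BurgosFreixasLitcanu:HerStruc}---so that Prop.~7.6 gives the torsion decomposition with \emph{no} Bott--Chern correction terms and your matching step becomes vacuous.
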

\begin{proof}
Let $[\ov{\mathcal{F}}^{\ast},\widetilde{\eta}]$ be an object in
$\hDb(\mathcal{X},C)$. By Remark \ref{rem:1}, we can suppose the
hermitian structure on $\ov{\mathcal{F}}^{\ast}$ is given by a
quasi-isomorphism
$\ov{\mathcal{E}}^{\ast}\dashrightarrow\mathcal{F}^{\ast}$, where
$\ov{\mathcal{E}}^{\ast}$ is a finite complex of locally free sheaves,
each one endowed with a smooth hermitian metric. For every $i$, we
have to endow $f_{\ast}\mathcal{E}^{i}$ with a hermitian structure
given by a finite locally free resolution
$\ov{\mathcal{E}}^{i\ast}\dashrightarrow f_{\ast}\mathcal{E}^{i}$ and
a choice of arbitrary smooth hermitian metric on every piece
$\mathcal{E}^{ij}$. From the data
$\ov{\mathcal{E}}^{i\ast}\dashrightarrow f_{\ast}\mathcal{E}^{i}$,
every $i$, the procedure of
\cite[Def. 3.39]{BurgosFreixasLitcanu:HerStruc} produces a hermitian
structure on $f_{\ast}\mathcal{E}^{\ast}$, via the hermitian cone
construction. Observe the construction of \emph{loc. cit.} can be done
in $\oDb(\mathcal{Y})$. Combined with the fixed quasi-isomorphism
$\mathcal{E}^{\ast}\dashrightarrow\mathcal{F}^{\ast}$, we thus obtain
a hermitian structure on $f_{\ast}\mathcal{F}^{\ast}$, that we denote
by $\ov{f_{\ast}\mathcal{F}}^{\ast}$. 

The class of $[\ov{\mathcal{F}},\widetilde{\eta}]$ in $\hDb(\mathcal{X},C)$ is thus
\begin{displaymath}
	\sum_{i}(-1)^{i}[\ov{\mathcal{E}}^{i},0] + [0,\widetilde{\eta}].
\end{displaymath}
Its image under $\ov{f}_{\ast}$ is
\begin{equation}\label{eq:20}
	\sum_{i}(-1)^{i}\left((\sum_{j}(-1)^{j}[\ov{\mathcal{E}}^{ij},0])-[0,c'(T(\ov{f},\ov{\mathcal{E}}^{i}, \ov{f_{\ast}\mathcal{E}}^{i}))]\right) + [0,\ov{f}_{\ttwist}(\widetilde{\eta})].
\end{equation}
The class of $\ov{f}_{\ast}[\ov{\mathcal{F}},\widetilde{\eta}]$ in $\hDb(\mathcal{Y},C')$ is
\begin{equation}\label{eq:21}
	[\ov{f_{\ast}\mathcal{F}}^{\ast},\ov{f}_{\ttwist}(\widetilde{\eta})-c'(T(\ov{f},\ov{\mathcal{F}},\ov{f_{\ast}\mathcal{F}}^{\ast}))].
\end{equation}
By the choice of the hermitian structures on $\ov{\mathcal{F}}^{\ast}$ and $\ov{f_{\ast}\mathcal{F}}^{\ast}$ and by Theorem \cite[Prop. 7.6]{BurgosFreixasLitcanu:GenAnTor}, we have
\begin{displaymath}
	T(\ov{f},\ov{\mathcal{F}},\ov{f_{\ast}\mathcal{F}}^{\ast})=\sum_{i}(-1)^{i} T(\ov{f},\ov{\mathcal{E}}^{i},\ov{f_{\ast}\mathcal{E}}^{i}).
\end{displaymath}
Therefore the class in arithmetic $K$-theory of \eqref{eq:21} equals \eqref{eq:20}.
\end{proof}
There are several compatibilities between direct images, inverse images and derived tensor product. We now state them without proof, referring the reader to \cite[Thm. 10.7]{BurgosFreixasLitcanu:GenAnTor} for the details.
\begin{proposition}\label{prop:5}
 Let $\ov{f}\colon\mathcal{X}\to\mathcal{Y}$ and $\ov{g}\colon\mathcal{Y}\to \mathcal{Z}$ be morphisms in $\ov{\Reg}_{\ast/A}$. Let $S\subset T^{\ast}X_{0}$ and $T\subset T^{\ast}Y_{0}$ be closed conical subsets.
\begin{enumerate}
	\item (Functoriality of push-forward) We have the relation
  		\begin{displaymath}
    			(\ov{g}\circ \ov{f})_{\ast}=\ov{g}_{\ast}\circ\ov{f}_{\ast},
   		\end{displaymath}
		as functors $\hDb(\mathcal{X},S)\rightarrow\hDb(\mathcal{Z},g_{\ast}f_{\ast}S)$.
  \item (Projection formula) Assume that $T\cap N_{f}=\emptyset$ and that $T+f_{\ast}S$ does not cross the zero section of $T^{\ast}Y$. Let $[\ov{\mathcal{F}}^{\ast},\widetilde{\eta}]$ be in $\hDb(\mathcal{X},S)$ and $[\ov{\mathcal{F}}^{\prime\ast},\widetilde{\eta}^{\prime}]$ in $\hDb(\mathcal{Y},T)$. Then
	\begin{displaymath}
     		\ov{f}_{\ast}([\ov{\mathcal{F}}^{\ast},\widetilde{\eta}] \otimes
      		f^{\ast}[\ov{\mathcal{F}}^{\prime\ast},\widetilde{\eta}^{\prime}])
     		= \ov{f}_{\ast}[\ov{\mathcal{F}},\widetilde{\eta}]\otimes [\ov{\mathcal{F}}^{\prime},\widetilde{\eta}^{\prime}]
    \end{displaymath}
    in $\hDb(\mathcal{Y},W)$, where
    \begin{displaymath}
    	W=f_{\ast}(S + f^{\ast}T)\cup f_{\ast}S\cup f_{\ast}f^{\ast}T.
    \end{displaymath}
	\item There are analogous relations on the level of arithmetic $K$-theory, compatible with the class functor from arithmetic derived categories.
\end{enumerate}
\end{proposition}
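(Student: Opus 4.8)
The plan is to reduce each assertion to the corresponding statement in the geometric setting, \cite[Thm.~10.7]{BurgosFreixasLitcanu:GenAnTor}, combined with the functoriality and the projection formula already available for the generalized arithmetic Chow groups with controlled wave front sets (Proposition~\ref{prop:8} and Theorem~\ref{thm:2}), together with the compatibility of push-forward with the class functor (Theorem~\ref{thm:6}).

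First I would unwind the definition of $\ov{f}_{\ast}$ and $\ov{g}_{\ast}$ on a representative $[\ov{\mathcal{F}}^{\ast},\widetilde{\eta}]$ of an object of $\hDb(\mathcal{X},S)$. By Remark~\ref{rem:1} I may assume the hermitian structure is given by a bounded complex of hermitian locally free sheaves $\ov{\mathcal{E}}^{\ast}\dashrightarrow\mathcal{F}^{\ast}$. Computing $\ov{g}_{\ast}\circ\ov{f}_{\ast}$ produces an object of $\hDb(\mathcal{Z},C'')$ with underlying complex $(g\circ f)_{\ast}\mathcal{F}^{\ast}$, equipped with a hermitian structure assembled by the hermitian cone construction, and with current component
\begin{multline*}
\ov{g}_{\ttwist}(\ov{f}_{\ttwist}(\widetilde{\eta}))
-\ov{g}_{\ttwist}\bigl(c'(T(\ov{f},\ov{\mathcal{F}}^{\ast},\ov{f_{\ast}\mathcal{F}}^{\ast}))\bigr)\\
-c''\bigl(T(\ov{g},\ov{f_{\ast}\mathcal{F}}^{\ast},\ov{g_{\ast}f_{\ast}\mathcal{F}}^{\ast})\bigr),
\end{multline*}
whereas $(\ov{g}\circ\ov{f})_{\ast}$ produces $(g\circ f)_{\ttwist}(\widetilde{\eta})-c''\bigl(T(\ov{g}\circ\ov{f},\ov{\mathcal{F}}^{\ast},\ov{g_{\ast}f_{\ast}\mathcal{F}}^{\ast})\bigr)$. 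The equality then rests on two inputs: the multiplicativity of the Todd class along the tangent triangle $T_{f}\to T_{g\circ f}\to f^{\ast}T_{g}\to T_{f}[1]$, which yields $\ov{g}_{\ttwist}\circ\ov{f}_{\ttwist}=(\ov{g}\circ\ov{f})_{\ttwist}$ up to a correction by the secondary Todd class of that triangle; and the composition axiom for the theory of generalized analytic torsion classes, which contributes exactly the opposite correction, so the two cancel. Both facts are part of the formalism of \cite{BurgosFreixasLitcanu:GenAnTor}; the work here is to check that the ambiguity in the choices of hermitian structure on the intermediate direct images is absorbed by the anomaly formulas \cite[Prop.~7.4]{BurgosFreixasLitcanu:GenAnTor}, and that every current occurring has wave front set contained in $g_{\ast}f_{\ast}S$, which is the content of Proposition~\ref{prop:8}.

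For the projection formula the argument is parallel: unwinding the definitions of $\ov{f}_{\ast}$, $f^{\ast}$ and $\otimes$ on representatives, the claimed identity reduces to the projection formula for the Todd-twisted direct image of currents and to the projection formula for analytic torsion classes, both contained in \cite[Thm.~10.7]{BurgosFreixasLitcanu:GenAnTor}. The compatibility of products with push-forward on the Chow-group side, which governs the wave front sets, is Theorem~\ref{thm:2} applied to $S$ and $T$; the hypotheses $T\cap N_{f}=\emptyset$ and the non-degeneracy of $T+f_{\ast}S$ are exactly what is needed for $f^{\ast}$, $\otimes$ and $f_{\ast}$ to all be defined and for the outcome to lie in $\hDb(\mathcal{Y},W)$ with $W=f_{\ast}(S+f^{\ast}T)\cup f_{\ast}S\cup f_{\ast}f^{\ast}T$. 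Finally, the statements on arithmetic $K$-theory follow by applying the class functor from arithmetic derived categories to arithmetic $K$-theory and using Theorem~\ref{thm:6}; since this functor is surjective on objects, the relations descend.

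The step I expect to be the main obstacle is organizational rather than conceptual: the definition of $\ov{f}_{\ast}$ on the arithmetic derived category depends on several auxiliary choices (locally free resolutions of the $f_{\ast}\mathcal{E}^{i}$, metrics on them, and the hermitian cone packaging them into a hermitian structure on $f_{\ast}\mathcal{F}^{\ast}$), and one must arrange that the two sides of each identity are computed with compatible choices, so that the various secondary Bott-Chern classes and analytic torsion currents line up term by term. This is where one leans most heavily on the bookkeeping of \cite{BurgosFreixasLitcanu:GenAnTor}.
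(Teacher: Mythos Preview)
Your approach matches the paper's: the paper does not give a proof of this proposition at all, stating only that the result follows from \cite[Thm.~10.7]{BurgosFreixasLitcanu:GenAnTor} and leaving the details to the reader. What you have written is a reasonable expansion of how that reduction goes, invoking the transitivity axiom for the torsion theory, the anomaly formulas, and the wave-front bookkeeping from Proposition~\ref{prop:8} and Theorem~\ref{thm:2}.

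One small point: in your discussion of part~(i) you speak of $\ov{g}_{\ttwist}\circ\ov{f}_{\ttwist}=(\ov{g}\circ\ov{f})_{\ttwist}$ holding ``up to a correction by the secondary Todd class'' which then cancels against the composition axiom for torsion. In fact, the composition law in $\ov{\Reg}_{\ast/A}$ (inherited from $\ov{\Sm}_{\ast/\RR}$) is \emph{defined} precisely so that the tangent triangle $T_{\ov{f}}\dra T_{\ov{g}\circ\ov{f}}\dra f^{\ast}T_{\ov{g}}\dra T_{\ov{f}}[1]$ is tightly distinguished; hence that secondary Todd class vanishes and $\ov{g}_{\ttwist}\circ\ov{f}_{\ttwist}=(\ov{g}\circ\ov{f})_{\ttwist}$ on the nose at the level of forms. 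The transitivity axiom for $T$ then gives the remaining identity directly, with no cancellation needed. This does not affect the correctness of your outline, only its phrasing.
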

\begin{remark}
Strictly speaking, the equalities provided by the previous statement should be canonical isomorphisms, but as usual we abuse the notations and pretend they are equalities.
\end{remark}
\section{Arithmetic Grothendieck-Riemann-Roch}\label{section:ARR}
\subsection{Statement and reductions}
\paragraph{Hermitian tangent complexes.} Let $\ov{f}\colon\mathcal{X}\rightarrow\mathcal{Y}$ be a morphism in $\ov{\Reg}_{\ast/A}$. We explain how to construct the associated hermitian tangent complex $T_{\ov{f}}$. This is an object in $\oDb(\mathcal{X})$, well defined up to tight isomorphism. 

Because $\mathcal{X}$, $\mathcal{Y}$ are regular schemes and $f$ is projective, it is automatically a l.c.i. morphism. The tangent complex of $f$ is an object in $\Db(\mathcal{X})$, well defined up to isomorphism. Consider a factorization
\begin{displaymath}
	\xymatrix{
		\mathcal{X}\ar@{^{(}->}[r]^-{i}\ar[rd]	&\mathcal{Z}\ar[d]^{\pi}\\
		&\mathcal{Y},
	}
\end{displaymath}
with $i$ being a closed regular immersion and $\pi$ a smooth morphism. For instance, one may choose $\mathcal{Z}=\PP^{n}_{\mathcal{Y}}$, for some $n$. Let us denote by $\mathcal{I}$ the ideal defining the closed immersion $i$. Then $\mathcal{I}/\mathcal{I}^{2}$ is a locally free sheaf on $\mathcal{X}$ and, as customary, we define the normal bundle $N_{\mathcal{X}/\mathcal{Z}}=(\mathcal{I}/\mathcal{I}^{2})^{\vee}$. There is a morphism of coherent sheaves
\begin{displaymath}
	\varphi\colon i^{\ast}T_{\mathcal{Z}/\mathcal{Y}}\longrightarrow N_{\mathcal{X}/\mathcal{Z}},
\end{displaymath}
namely the dual of the differential map $d:\mathcal{I}/\mathcal{I}^{2}\to i^{\ast}\Omega_{\mathcal{Z}/\mathcal{Y}}$. We consider $T_{\mathcal{Z}/\mathcal{Y}}$ as a complex concentrated in degree 0, and $N_{\mathcal{X}/\mathcal{Z}}$ as a complex concentrated in degree one. We then put
\begin{displaymath}
	T_{f}:=\cone(\varphi)[-1].
\end{displaymath}
The isomorphism class of $T_{f}$ in $\Db(\mathcal{X})$ is independent of the factorization. 

The base change to $\CC$ of $T_{f}$ is naturally isomorphic to the tangent complex $T_{f_{\CC}}:TX_{\CC}\rightarrow f_{\CC}^{\ast}Y_{\CC}$, which is equipped with a hermitian structure by assumption. Therefore, the data provided by the constructed complex $T_{f}$ and the hermitian structure on $\ov{f}$ determine an object $T_{\ov{f}}$ in $\oDb(\mathcal{X})$, which is well defined up to tight isomorphism. By Theorem \ref{thm:4}, the arithmetic Todd class of $T_{\ov{f}}$ is unambiguously defined.
\begin{definition}
The arithmetic Todd class of $\ov{f}$ is
\begin{displaymath}
  \widehat{\Td}(\ov{f}):=\widehat{\Td}(T_{\ov{f}})\in
  \bigoplus_{p}\cha^{p}_{\QQ}(\mathcal{X},\mathcal{D}_{\as}(X)). 
\end{displaymath}
\end{definition}
\begin{theorem}
Let $\ov{f}\colon\mathcal{X}\rightarrow\mathcal{Y}$,
$\ov{g}\colon\mathcal{Y}\rightarrow\mathcal{Z}$ be morphisms in
$\ov{\Reg}_{\ast/A}$. Then we have an equality 
\begin{displaymath}
  \widehat{\Td}(\ov{g}\circ\ov{f})=f^{\ast}\widehat{\Td}(\ov{g})\cdot\widehat{\Td}(\ov{f})
\end{displaymath}
in $\bigoplus_{p}\cha^{p}_{\QQ}(\mathcal{X},\mathcal{D}_{\as}(X))$.
\end{theorem}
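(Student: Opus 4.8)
The plan is to deduce the identity from the multiplicativity of the arithmetic Todd class along distinguished triangles, Theorem \ref{thm:4}, applied to the standard exact triangle of tangent complexes attached to a composition. Since $\mathcal{X}$, $\mathcal{Y}$, $\mathcal{Z}$ are regular and the morphisms are projective, $f$, $g$ and $g\circ f$ are l.c.i., and in $\Db(\mathcal{X})$ their tangent complexes sit in a distinguished triangle
\[
	T_{f}\longrightarrow T_{g\circ f}\longrightarrow f^{\ast}T_{g}\longrightarrow T_{f}[1].
\]
First I would recall why this holds: choosing compatible factorizations $\mathcal{X}\hookrightarrow\PP^{n}_{\mathcal{Y}}\to\mathcal{Y}$ of $f$ and $\mathcal{Y}\hookrightarrow\PP^{m}_{\mathcal{Z}}\to\mathcal{Z}$ of $g$ yields a factorization of $g\circ f$ through a closed regular immersion and a smooth morphism, and comparing the relative tangent sheaves and normal bundles of these factorizations produces the triangle; its class in $\Db(\mathcal{X})$ is independent of all choices, exactly as in the definition of $T_{f}$ above.

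The step requiring the most care, and the main obstacle, is to promote this to a \emph{tightly} distinguished triangle in $\oDb(\mathcal{X})$ once the hermitian structures carried by $\ov{f}$ and $\ov{g}$ are fixed. Here one uses that, by construction, the composition law in $\ov{\Reg}_{\ast/A}$ --- pulled back from that of $\ov{\Sm}_{\ast/\RR}$, itself modeled on \cite[Def. 5.7]{BurgosFreixasLitcanu:HerStruc} --- attaches to $\ov{g}\circ\ov{f}$ the hermitian structure on $T_{g\circ f}$ obtained, through the hermitian cone construction of \cite[Def. 3.39]{BurgosFreixasLitcanu:HerStruc} applied to the connecting morphism $f^{\ast}T_{\ov{g}}[-1]\to T_{\ov{f}}$, from the hermitian structures on $T_{\ov{f}}$ and on $f^{\ast}T_{\ov{g}}$. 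Consequently, by the characterization of hermitian cones and tight triangles in \cite[Sec. 3]{BurgosFreixasLitcanu:HerStruc}, the triangle
\[
	\ov{\tau}\colon\quad T_{\ov{f}}\dashrightarrow T_{\ov{g}\circ\ov{f}}\dashrightarrow f^{\ast}T_{\ov{g}}\dashrightarrow T_{\ov{f}}[1]
\]
is tightly distinguished in $\oDb(\mathcal{X})$. The delicate point I anticipate is to match, on the nose, the hermitian tangent complex prescribed by the composition law with the hermitian cone of $T_{\ov{f}}$ and $f^{\ast}T_{\ov{g}}$, keeping track of the tight isomorphisms identifying the various descriptions of $T_{g\circ f}$ coming from different factorizations; once this matching is established, the remainder is formal.

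Granting tightness, I would apply Theorem \ref{thm:4} to the multiplicative genus $\Td$, whose degree $0$ component equals $1$: the secondary class $\widetilde{\Td}_{m}(\ov{\tau})$ vanishes, and therefore
\[
	\widehat{\Td}(T_{\ov{g}\circ\ov{f}})=\widehat{\Td}(T_{\ov{f}})\cdot\widehat{\Td}(f^{\ast}T_{\ov{g}})
\]
in $\bigoplus_{p}\cha^{p}_{\QQ}(\mathcal{X},\mathcal{D}_{\as}(X))$. It remains to identify $\widehat{\Td}(f^{\ast}T_{\ov{g}})$ with $f^{\ast}\widehat{\Td}(T_{\ov{g}})=f^{\ast}\widehat{\Td}(\ov{g})$, which is the pull-back compatibility of the arithmetic characteristic classes constructed in this section; via the presentation $\widehat{\varphi}(\ov{\mathcal{F}}^{\ast})=\widehat{\varphi}(\ov{\mathcal{E}}^{\ast})+\amap(\widetilde{\varphi}(\ov{\psi}))$ this reduces to the pull-back compatibility of the Gillet-Soul\'e classes of the individual bundles and to the naturality of Bott-Chern secondary classes under $f^{\ast}$. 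Combining the last two displays with $\widehat{\Td}(\ov{f})=\widehat{\Td}(T_{\ov{f}})$, $\widehat{\Td}(\ov{g})=\widehat{\Td}(T_{\ov{g}})$ and $\widehat{\Td}(\ov{g}\circ\ov{f})=\widehat{\Td}(T_{\ov{g}\circ\ov{f}})$ yields $\widehat{\Td}(\ov{g}\circ\ov{f})=f^{\ast}\widehat{\Td}(\ov{g})\cdot\widehat{\Td}(\ov{f})$, as desired.
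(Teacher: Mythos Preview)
Your proposal is correct and follows essentially the same approach as the paper: the paper's proof simply asserts that, by construction of $T_{\ov{f}}$ and the definition of the composition rule in $\ov{\Reg}_{\ast/A}$, the triangle $T_{\ov{f}}\dra T_{\ov{g}\circ\ov{f}}\dra f^{\ast}T_{\ov{g}}\dra T_{\ov{f}}[1]$ is tightly distinguished in $\oDb(\mathcal{X})$, and then invokes Theorem~\ref{thm:4}. You have spelled out the same argument in more detail, including the pull-back compatibility $\widehat{\Td}(f^{\ast}T_{\ov{g}})=f^{\ast}\widehat{\Td}(\ov{g})$ that the paper leaves implicit.
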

\begin{proof}
By construction of $T_{\ov{f}}$ and definition of the composition rule of morphisms in $\ov{\Reg}_{\ast/A}$, there a is tightly distinguished triangle in $\oDb(\mathcal{X})$
 \begin{displaymath}
	T_{\ov{f}}\dra T_{\ov{g}\circ \ov{f}}\dra f^{\ast}T_{\ov{g}}\dra T_{\ov{f}}[1].
\end{displaymath}
We conclude by an application of Theorem \ref{thm:4}.
\end{proof}
\paragraph{Statement.} The arithmetic Grothendieck-Riemann-Roch theorem describes the behavior of the arithmetic Chern character with respect to the push-forward functor. Recall that the definition of the push-forward functor depends on the choice of a theory of generalized analytic torsion classes. In its turn, such a theory corresponds to a real additive genus. 
\begin{theorem}\label{thm:5}
Let $\ov{f}\colon\mathcal{X}\rightarrow\mathcal{Y}$ be a morphism in $\ov{\Reg}_{\ast/A}$. Fix a closed conical subset $W$ of $T_{0}^{\ast}X$ and a theory of generalized analytic torsion classes $T$, whose associated real additive genus is $S$. Then, the derived direct image functor $$\ov{f}_{\ast}\colon\hDb(\mathcal{X},\mathcal{D}_{\D,\as}(X,W))\longrightarrow\hDb(\mathcal{Y},\mathcal{D}_{\D,\as}(Y,f_{\ast}(W)))$$ attached to $T$ satisfies the equation
\begin{equation}\label{eq:29}
	\chh(\ov{f}_{\ast}\alpha)=f_{\ast}(\chh(\alpha)\widehat{\Td}(\ov{f}))-\amap(f_{\ast}(\ch(\mathcal{F}^{\ast}_{\CC})\Td(T_{f_{\CC}})S(T_{f_{\CC}})),
\end{equation}
for every object
$\alpha=[\ov{\mathcal{F}}^{\ast},\widetilde{\eta}]$. The equality
takes place in the arithmetic Chow group
$\bigoplus_{p}\cha^{p}_{\QQ}(\mathcal{Y},\mathcal{D}_{\D,\as}(Y,f_{\ast}(W)))$.
\end{theorem}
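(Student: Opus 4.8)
The plan is to prove \eqref{eq:29} by a sequence of reductions ending in two base cases; since by Theorem \ref{thm:6} the class functor intertwines the two push-forwards and $\chh$ on $\hDb$ factors through $\widehat{K}_{0}$, it suffices throughout to establish the corresponding identity in arithmetic $K$-theory, and we pass freely between $\hDb$ and $\widehat{K}_{0}$. \emph{Reductions on the object and on the torsion theory.} Write $\alpha=[\ov{\mathcal{F}}^{\ast},\widetilde{\eta}]$. For $\alpha=[0,\widetilde{\eta}]$ the torsion term of \eqref{eq:29} vanishes (the torsion of the zero object is zero) and both sides reduce to $\amap(f_{\ast}(\widetilde{\eta}\bullet\Td(T_{f_{\CC}})))$, using the product formula \eqref{product}, the fact that $\widehat{\Td}(\ov{f})$ has curvature form $\Td(T_{f_{\CC}})$, and the compatibility of $\amap$ with $f_{\ast}$ on arithmetic Chow groups \eqref{dir_im_gen}; hence we may assume $\alpha=[\ov{\mathcal{F}}^{\ast},0]$ with $\ov{\mathcal{F}}^{\ast}$ an object of $\oDb(\mathcal{X})$, and even a single hermitian vector bundle, using the additivity of $\widehat{\Td}$, of the generalized torsion and of $\chh$ along tightly distinguished triangles (Theorems \ref{thm:3}--\ref{thm:4}). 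Next we discard the wave front bookkeeping: by the $\ov{\partial}$-Poincar\'e lemma with fixed wave front set \cite[Thm.~4.5]{BurgosLitcanu:SingularBC}, arguing as in Lemma \ref{lemm:1}\ref{item:3} and Proposition \ref{prop:1}, the natural map $\cha^{p}_{\QQ}(\mathcal{Y},\mathcal{D}_{\D,\as}(Y,f_{\ast}(W)))\to\cha^{p}_{\QQ}(\mathcal{Y},\mathcal{D}_{\D,\as}(Y))$ is injective, so it suffices to prove \eqref{eq:29} in the group of all currents. Finally we reduce to the homogeneous theory $T_{0}$, of genus $S=0$: for a theory $T$ of genus $S$, the classification \cite[Thm.~7.7,~Thm.~7.14]{BurgosFreixasLitcanu:GenAnTor} gives, after fixing signs and periods by the anomaly formulas \cite[Prop.~7.4]{BurgosFreixasLitcanu:GenAnTor}, the identity $T(\ov{\xi})-T_{0}(\ov{\xi})=f_{\ast}(\ch(\mathcal{F}^{\ast}_{\CC})\Td(T_{f_{\CC}})S(T_{f_{\CC}}))$; comparing the two instances of the definition of $\ov{f}_{\ast}$ and applying $\chh$, the statement for general $S$ follows from the exact formula $\chh(\ov{f}_{\ast}\alpha)=f_{\ast}(\chh(\alpha)\widehat{\Td}(\ov{f}))$ for $T_{0}$.

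\emph{Reduction to two base cases.} Factor $\ov{f}=\ov{\pi}\circ\ov{i}$ with $i\colon\mathcal{X}\hookrightarrow\PP^{n}_{\mathcal{Y}}$ a closed regular immersion and $\pi\colon\PP^{n}_{\mathcal{Y}}\to\mathcal{Y}$ the projection, equipped with hermitian structures on the tangent complexes making this a factorization in $\ov{\Reg}_{\ast/A}$; this is possible because $T_{\ov{f}}$ was itself constructed from such a factorization. Using the functoriality of push-forward and the projection formula of Proposition \ref{prop:5}, the compatibility of $\chh$ with pull-back, products and push-forward on arithmetic Chow groups, and the multiplicativity of $\widehat{\Td}$ in composition, one verifies directly that if the exact formula holds for $\ov{\pi}$ and for $\ov{i}$ then it holds for $\ov{f}$. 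Thus it suffices to treat (a) $\ov{f}$ a closed immersion and (b) $\ov{f}=\ov{\pi}$; the argument for (b) applies verbatim to any $\ov{f}$ that is smooth over $\CC$.

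\emph{Case (b): smooth over $\CC$.} Here the homogeneous torsion differs from the Bismut--K\"ohler holomorphic torsion $T_{\mathrm{BK}}$ by the universal characteristic form $T_{0}(\ov{f},\ov{\mathcal{F}}^{\ast},\ov{f_{\ast}\mathcal{F}}^{\ast})=T_{\mathrm{BK}}(\ov{f},\ov{\mathcal{F}}^{\ast},\ov{f_{\ast}\mathcal{F}}^{\ast})-f_{\ast}(\ch(\mathcal{F}^{\ast}_{\CC})\Td(T_{f_{\CC}})R(T_{f_{\CC}}))$, where $R$ is the Gillet--Soul\'e $R$-genus: this is precisely the normalization under which the Bismut--K\"ohler theory corresponds in \cite{BurgosFreixasLitcanu:GenAnTor} to the genus $R$. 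Substituting into the definition of $\ov{f}_{\ast}$ and invoking the arithmetic Grothendieck--Riemann--Roch theorem for smooth morphisms \cite{GilletSoule:aRRt,GilletRoesslerSoule:_arith_rieman_roch_theor_in_higher_degrees} in the form $\chh(f^{\mathrm{BK}}_{\ast}\alpha)=f_{\ast}(\chh(\alpha)\widehat{\Td}(\ov{f}))-\amap(f_{\ast}(\ch(\mathcal{F}^{\ast}_{\CC})\Td(T_{f_{\CC}})R(T_{f_{\CC}})))$, the two $R$-genus terms cancel and the exact formula for $T_{0}$ follows.

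\emph{Case (a): closed immersions; the main obstacle.} For $\ov{f}=\ov{i}$, the functor $\ov{f}_{\ast}$ is computed by choosing a hermitian locally free resolution $\ov{\mathcal{E}}^{\ast}\dra i_{\ast}\mathcal{F}^{\ast}$ on $\PP^{n}_{\mathcal{Y}}$ and correcting by the generalized immersion torsion current $T_{0}(\ov{i},\ov{\mathcal{F}}^{\ast},\ov{f_{\ast}\mathcal{F}}^{\ast})\in\bigoplus_{p}\widetilde{\mathcal{D}}^{2p-1}_{\D,\as}(X,N_{i},p)$, which in the homogeneous normalization of \cite{BurgosFreixasLitcanu:GenAnTor} is the Bismut--Lebeau analytic torsion current of the resolution. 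With this input, the exact form of \eqref{eq:29} reduces to the arithmetic Riemann--Roch theorem for closed immersions of Gillet--Soul\'e and Bismut, whose underlying identity of currents --- Bismut--Lebeau's formula for the singular Bott--Chern current of a resolution, which carries no $R$-genus --- is exactly what is needed; the remaining bookkeeping with the maps $\amap$, $\omega_{C}$ and $\zeta_{C}$ is supplied by the exact sequences of Lemma \ref{lemm:1}. I expect the real difficulty of the proof to be concentrated here and in the attendant normalizations: one must check that the immersion component of the homogeneous torsion theory of \cite{BurgosFreixasLitcanu:GenAnTor}, defined axiomatically, agrees, under the conventions for periods and for the implicit powers of $2\pi i$, with the Bismut--Lebeau current, and that the wave front set of the latter is exactly the conormal cone $N_{i}$ predicted by the formalism of Section \ref{section:GenArChow}. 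Once \eqref{eq:29} is established, applying it to $\pi\colon\PP^{n}_{A}\to\Spec A$ yields the characteristic numbers of the homogeneous theory, as announced in the introduction.
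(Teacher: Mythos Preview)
Your overall architecture---reduce to the homogeneous theory, factor $\ov{f}=\ov{\pi}\circ\ov{i}$, and treat the two pieces separately---matches the paper's. The closed immersion case is also handled the same way: the paper simply invokes \cite[Thm.~10.28]{BurgosLitcanu:SingularBC} (singular Bott--Chern classes, not the Bismut--Lebeau current as you write) and adapts it to the present setting via anomaly formulas. So your expectation that ``the real difficulty of the proof is concentrated here'' is misplaced; case (a) is essentially a citation.

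The genuine divergence, and the gap in your argument, is case (b). You want to deduce the exact formula for $T^{h}$ on $\ov{\pi}$ from Gillet--R\"ossler--Soul\'e plus the classification: writing $T^{h}=T_{\mathrm{BK}}-f_{\ast}(\ch\cdot\Td\cdot S_{\mathrm{BK}})$, you need $S_{\mathrm{BK}}=R$ so that the $R$-genus in GRS cancels. But the ``associated genus'' in the classification \cite[Thm.~7.7,~7.14]{BurgosFreixasLitcanu:GenAnTor} is defined relative to $T^{h}$, and the equality $S_{\mathrm{BK}}=R$ is exactly the statement that the classification genus equals the Riemann--Roch genus for the Bismut--K\"ohler theory---which is the theorem you are proving, specialized to $T=T_{\mathrm{BK}}$. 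Concretely: if $S_{\mathrm{BK}}=R$ were available in \cite{BurgosFreixasLitcanu:GenAnTor}, the characteristic numbers $t^{h}_{n,k}$ would follow at once from the known $t^{\mathrm{BK}}_{n,k}$ of \cite{GilletSoule:ATAT}; yet the paper computes the $t^{h}_{n,k}$ only \emph{after} proving the theorem and explicitly says this ``was left open'' in \cite{BurgosFreixasLitcanu:GenAnTor}. Your step is circular.

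The paper instead gives a self-contained proof for $\ov{\pi}$ that avoids GRS entirely. After reducing to $\PP^{n}_{\ZZ}\to\Spec\ZZ$ and $\alpha=\ov{\OO(k)}$, $-n\le k\le 0$, the base case $k=0$ is obtained from the diagonal $\Delta\colon\PP^{n}\hookrightarrow\PP^{n}\times\PP^{n}$, its Koszul resolution, and the already-established closed immersion case; the remaining $k$ follow by induction using the hyperplane section $\PP^{n-1}\hookrightarrow\PP^{n}$ and its two-term Koszul complex. This bootstrapping from closed immersions is the actual content of the proof, and it is what yields the characteristic numbers $t_{n,k}$ as a by-product.
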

Recall that the genus $S$=0 corresponds to the homogeneous theory
$T^{h}$, which is characterized by satisfying an additional
homogeneity property \cite[Sec. 9]{BurgosLitcanu:SingularBC}. Roughly
speaking, this condition is exactly the one guaranteeing an
arithmetic Grothendieck-Riemann-Roch theorem without
correction term.
\begin{corollary}
The direct image functor $\ov{f}^{h}_{\ast}$ attached to the
homogeneous generalized analytic torsion theory $T^{h}$ satisfies an
exact Grothendieck-Riemann-Roch type formula:
\begin{displaymath}
  \chh(\ov{f}^{h}_{\ast}\alpha)=f_{\ast}(\chh(\alpha)\widehat{\Td}(\ov{f})).
\end{displaymath}
\end{corollary}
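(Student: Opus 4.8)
The plan is to obtain this corollary as a direct specialization of Theorem \ref{thm:5}. First I would recall that, by \cite[Thm. 7.7 and Thm. 7.14]{BurgosFreixasLitcanu:GenAnTor}, a theory of generalized analytic torsion classes is uniquely determined by its associated real additive genus, and that the homogeneous theory $T^{h}$ is by construction the one corresponding to the zero genus $S=0$ (this is exactly the characterization recalled just before the corollary; see also \cite[Sec. 9]{BurgosLitcanu:SingularBC}).

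Next I would apply Theorem \ref{thm:5} to $\ov{f}$, to the chosen closed conical subset $W$, and to the torsion theory $T=T^{h}$, so that the genus appearing in \eqref{eq:29} is $S=0$. The equation then becomes
\begin{displaymath}
  \chh(\ov{f}^{h}_{\ast}\alpha)=f_{\ast}(\chh(\alpha)\,\widehat{\Td}(\ov{f}))-\amap\bigl(f_{\ast}(\ch(\mathcal{F}^{\ast}_{\CC})\,\Td(T_{f_{\CC}})\,S(T_{f_{\CC}}))\bigr)
\end{displaymath}
for every object $\alpha=[\ov{\mathcal{F}}^{\ast},\widetilde{\eta}]$. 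Since $S$ is the zero additive genus, the differential form $S(T_{f_{\CC}})$ vanishes identically, hence so does the fiber integral $f_{\ast}(\ch(\mathcal{F}^{\ast}_{\CC})\,\Td(T_{f_{\CC}})\,S(T_{f_{\CC}}))$, and therefore the correction term $\amap(\cdots)$ is zero in $\bigoplus_{p}\cha^{p}_{\QQ}(\mathcal{Y},\mathcal{D}_{\D,\as}(Y,f_{\ast}(W)))$. This leaves exactly $\chh(\ov{f}^{h}_{\ast}\alpha)=f_{\ast}(\chh(\alpha)\,\widehat{\Td}(\ov{f}))$, which is the assertion.

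There is essentially no obstacle here: all the analytic and geometric content is packaged into Theorem \ref{thm:5}. The one point that deserves a line of care is that the identification ``$T^{h}\leftrightarrow S=0$'' used in the statement of Theorem \ref{thm:5} is the same one provided by the classification of \cite[Thm. 7.7, Thm. 7.14]{BurgosFreixasLitcanu:GenAnTor}; granting this, the corollary is a purely formal consequence of plugging $S=0$ into \eqref{eq:29}.
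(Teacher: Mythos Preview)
Your proposal is correct and matches the paper's own reasoning: the corollary is stated immediately after Theorem \ref{thm:5} with no separate proof, the surrounding text simply recalling that the homogeneous theory $T^{h}$ corresponds to the genus $S=0$, so that the correction term in \eqref{eq:29} vanishes. Your write-up spells this out explicitly, but the argument is identical.
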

A Grothendieck-Riemann-Roch type theorem in Arakelov geometry was
first proven by Gillet-Soul\'e \cite{GilletSoule:aRRt}, for the degree
1 part of the Chern character (namely the determinan of the
cohomology) and under the restriction on the morphism $f$ to be smooth
over $\CC$. Also they can only deal with hermitian vector
bundles. They used the holomorphic analytic torsion
\cite{BismutGilletSoule:at}, \cite{BismutGilletSoule:atII},
\cite{BismutGilletSoule:atIII} and deep results of Bismut-Lebeau
\cite{BismutLebeau:CiQm} on the compatibility of analytic torsion with
closed immersions. The holomorphic analytic torsion was later
generalized by Bismut-K\"ohler \cite{BismutKohler}, to the holomorphic
analytic torsion forms, that transgress the whole
Grothendieck-Riemann-Roch theorem for K\"ahler submersions, at the
level of differential forms. The extension of the arithmetic
Grothendieck-Riemann-Roch theorem to the full Chern character and
generically smooth morphisms was finally proven by
Gillet-R\"ossler-Soul\'e
\cite{GilletRoesslerSoule:_arith_rieman_roch_theor_in_higher_degrees}. They
applied the analogue to the Bismut-Lebeau immersion theorem, for
analytic torsion forms, established in the monograph by Bismut
\cite{Bismut:Asterisque}.

Theorem \ref{thm:5} provides an extension of the previous results in
several directions. First, we allow the morphism to be an arbitrary
projective morphism, non necessarily generically smooth. Second, we
can deal with metrized objects in the bounded derived category of
coherent sheaves. Third, we provide all the possible forms of such a
theorem, by introducing our theory of generalized analytic torsion
classes, thus explaining  the topological correction term. 

\paragraph{Reductions.} The proof of our version of the arithmetic
Grothendieck-Riemann-Roch theorem follows the pattern of the classical
approach in algebraic geometry. Namely, it proceeds by factorization
of the morphism $\ov{f}$ into a regular closed immersion and a trivial
projective bundle projection. The advantage of working with the
formalism of hermitian structures on the derived category of coherent
sheaves makes the whole procedure more transparent and direct, in
particular avoiding the appearance of several secondary classes. Also
the cocyle type relation expressing the behaviour of generalized
analytic torsion with respect to composition of morphisms in
$\ov{\Sm}_{\ast/\RR}$ (see the axioms
\cite[Sec. 7]{BurgosFreixasLitcanu:GenAnTor}) is well suited to this
factorization argument. 

By the classification of generalized analytic torsion theories, it is
enough to prove Theorem \ref{thm:5} for the homogenous theory
$T^{h}$. From now on we fix this choice, and hence all derived direct
image functors will be with respect to this theory. 

Let $\ov{f}$ be a morphism in $\ov{\Reg}_{\ast/A}$, and consider a factorization
\begin{displaymath}
  \xymatrix{
    \mathcal{X}\ar@{^{(}->}[r]^{i}\ar[rd]^{f}
    &\mathcal{\PP}^{n}_{\mathcal{Y}}\ar[d]^{\pi}\\
    &\mathcal{Y}.
  }
\end{displaymath}
The tangent complex of $\pi$ is canonically isomorphic to $p^{\prime\ast}T_{\PP^{n}_{A}/A}$, where 
\begin{displaymath}
	\xymatrix{
		\PP^{n}_{\mathcal{Y}}\ar[r]^-{p^{\prime}}\ar[d]_{\pi}	&\PP^{n}_{A}\ar[d]^{\pi^{\prime}}\\
		\mathcal{Y}\ar[r]_-{p}	&\Spec A.
	}
\end{displaymath}
We may thus endow $\pi$ with the pull-back by $p^{\prime}$ of the Fubini-Study metric on $T_{\PP^{n}_{A}/A}$ \cite[Sec. 5]{BurgosFreixasLitcanu:GenAnTor}. We write $\ov{\pi}$ for the resulting morphism in $\ov{\Reg}_{\ast/A}$. By \cite[Lemma 5.3]{BurgosFreixasLitcanu:HerStruc}, there exists a unique hermitian structure on $i$ such that $\ov{f}=\ov{\pi}\circ\ov{i}$. Then we recall that we have the equality of functors
\begin{displaymath}
	f_{\ast}=\ov{\pi}_{\ast}\circ\ov{i}_{\ast}
\end{displaymath}
and the equality of arithmetic Todd genera
\begin{displaymath}
	\widehat{\Td}(\ov{f})=i^{\ast}\widehat{\Td}(\ov{\pi})\widehat{\Td}(\ov{i}).
\end{displaymath}
\begin{lemma}
It is enough to prove Theorem \ref{thm:5} for $\ov{i}$ and $\ov{\pi}$ individually.
\end{lemma}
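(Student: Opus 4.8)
The plan is to deduce the Grothendieck--Riemann--Roch identity for $\ov{f}$ from the ones for $\ov{i}$ and $\ov{\pi}$ by a purely formal composition, using the functoriality of push-forward, the multiplicativity of $\widehat{\Td}$, and the projection formula. Since we have fixed $T=T^{h}$, the associated genus is $S=0$, so the correction term in \eqref{eq:29} disappears and Theorem \ref{thm:5} for a morphism $\ov{h}$ simply asserts $\chh(\ov{h}_{\ast}\gamma)=h_{\ast}(\chh(\gamma)\,\widehat{\Td}(\ov{h}))$. Fix an object $\alpha=[\ov{\mathcal{F}}^{\ast},\widetilde{\eta}]$ of $\hDb(\mathcal{X},\mathcal{D}_{\D,\as}(X,W))$. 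First I would settle the bookkeeping of conical sets: since $\pi$ is smooth, $N_{\pi}=\emptyset$ and hence $\pi_{\ast}(i_{\ast}(W))=f_{\ast}(W)$; by the constructions of Section \ref{section:DirectImage} the object $\ov{i}_{\ast}\alpha$ lies in $\hDb(\PP^{n}_{\mathcal{Y}},\mathcal{D}_{\D,\as}(\PP^{n}_{\mathcal{Y}},i_{\ast}(W)))$ and $\ov{\pi}_{\ast}(\ov{i}_{\ast}\alpha)$ lies in $\hDb(\mathcal{Y},\mathcal{D}_{\D,\as}(Y,f_{\ast}(W)))$. By the functoriality of push-forward (Proposition \ref{prop:5}, recalled just above the lemma as $\ov{f}_{\ast}=\ov{\pi}_{\ast}\circ\ov{i}_{\ast}$) one has $\ov{f}_{\ast}\alpha=\ov{\pi}_{\ast}(\ov{i}_{\ast}\alpha)$, so it suffices to compute $\chh$ of the right-hand side.

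Next I would apply Theorem \ref{thm:5} twice: to $\ov{\pi}$ with input $\ov{i}_{\ast}\alpha$, and to $\ov{i}$ with input $\alpha$. This gives
\begin{displaymath}
  \chh(\ov{f}_{\ast}\alpha)=\pi_{\ast}\bigl(\chh(\ov{i}_{\ast}\alpha)\cdot\widehat{\Td}(\ov{\pi})\bigr)
  \qquad\text{and}\qquad
  \chh(\ov{i}_{\ast}\alpha)=i_{\ast}\bigl(\chh(\alpha)\cdot\widehat{\Td}(\ov{i})\bigr),
\end{displaymath}
the products being those of Proposition \ref{prop:2}, which make sense because $\widehat{\Td}(\ov{\pi})$ and $\widehat{\Td}(\ov{i})$ are represented by smooth forms (empty wave front set). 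Substituting the second identity into the first and invoking the projection formula for the closed immersion $\ov{i}$ (Theorem \ref{thm:2}, whose disjointness hypothesis is vacuous here since $\widehat{\Td}(\ov{\pi})$ has empty wave front set) yields
\begin{displaymath}
  \chh(\ov{f}_{\ast}\alpha)=\pi_{\ast}i_{\ast}\bigl(\chh(\alpha)\cdot\widehat{\Td}(\ov{i})\cdot i^{\ast}\widehat{\Td}(\ov{\pi})\bigr).
\end{displaymath}
Finally, using $\pi_{\ast}\circ i_{\ast}=f_{\ast}$ together with the multiplicativity $\widehat{\Td}(\ov{f})=i^{\ast}\widehat{\Td}(\ov{\pi})\cdot\widehat{\Td}(\ov{i})$ recalled above, the right-hand side becomes $f_{\ast}\bigl(\chh(\alpha)\,\widehat{\Td}(\ov{f})\bigr)$, which is exactly Theorem \ref{thm:5} for $\ov{f}$.

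There is no genuine obstacle in this lemma --- the content is concentrated in the separate cases $\ov{i}$ and $\ov{\pi}$. The only points I would take care over are the verification that each product and pull-back above is legitimate with respect to the relevant conical subsets (this is where one uses $N_{\pi}=\emptyset$ and the smoothness of the forms representing $\widehat{\Td}(\ov{\pi})$, so that Propositions \ref{prop:2} and \ref{prop:3} and Theorem \ref{thm:2} apply), and that $\chh$ is compatible with $i_{\ast}$, $\pi_{\ast}$ and the module products at the level of generalized arithmetic Chow groups, which is part of the theory developed in Sections \ref{section:GenArChow}--\ref{section:ArKTheory}. For a general torsion theory $T$ with genus $S\neq 0$ the identical computation works, the additivity of the topological correction term under composition being precisely the cocycle relation for generalized analytic torsion in $\ov{\Sm}_{\ast/\RR}$; but having reduced to $T^{h}$ this refinement is not needed.
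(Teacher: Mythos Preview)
Your argument is correct and follows essentially the same route as the paper: apply the theorem for $\ov{\pi}$ to $\ov{i}_{\ast}\alpha$, then the theorem for $\ov{i}$ to $\alpha$, combine via the projection formula (Theorem \ref{thm:2}) and the multiplicativity $\widehat{\Td}(\ov{f})=i^{\ast}\widehat{\Td}(\ov{\pi})\cdot\widehat{\Td}(\ov{i})$, and finish with $f_{\ast}=\pi_{\ast}i_{\ast}$ together with the wave-front bookkeeping $f_{\ast}(W)=\pi_{\ast}(i_{\ast}(W))$ coming from $N_{\pi}=\emptyset$. Your additional care about the legitimacy of the products and pull-backs with respect to the conical subsets is well placed and matches the paper's justification.
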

\begin{proof}
Let us assume the theorem known for $\ov{i}$ and for $\ov{\pi}$. Because the theory is known for $\ov{\pi}$, we may apply it to the object $\ov{i}_{\ast}\alpha$ in $\oDb(\PP^{n}_{\mathcal{Y}})$, to obtain
\begin{equation}\label{eq:30}
	\chh(\ov{f}_{\ast}\alpha)=\chh(\ov{\pi}_{\ast}(\ov{i}_{\ast}\alpha))=
		\pi_{\ast}(\chh(\ov{i}_{\ast}\alpha)\widehat{\Td}(\ov{\pi})).
\end{equation}
Because the theorem is known for $\ov{i}$, we also have
\begin{equation}\label{eq:31}
	\begin{split}
		\chh(\ov{i}_{\ast}\alpha)\widehat{\Td}(\ov{\pi})=&i_{\ast}(\chh(\alpha)\widehat{\Td}(\ov{i}))\widehat{\Td}(\ov{\pi})\\
		&=i_{\ast}(\chh(\alpha)\widehat{\Td}(\ov{i})i^{\ast}\widehat{\Td}(\ov{\pi}))\\
		&\hspace{0.3cm}=i_{\ast}(\chh(\alpha)\widehat{\Td}(\ov{f})).
	\end{split}
\end{equation}
Observe we used the projection formula in arithmetic Chow groups
(Theorem \ref{thm:2}), and that this equality holds in
$\bigoplus_{p}\cha^{p}_{\QQ}(\mathcal{Y},\mathcal{D}_{\D,\as}(Y,f_{\ast}(W)))$,
because the definition of direct image of wave front sets and the fact
that $\pi$ is smooth yield $f_{\ast}(W)=\pi_{\ast}(i_{\ast}(W))$. We
conclude by putting \eqref{eq:30}--\eqref{eq:31} together and using
the fact $f_{\ast}=\pi_{\ast} i_{\ast}$ on arithmetic Chow groups
(Proposition \ref{prop:8}).
\end{proof}
\begin{lemma} \label{lemm:4}
Theorem \ref{thm:5} holds for $\ov{i}$.
\end{lemma}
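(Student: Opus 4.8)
The plan is to reduce the statement to an explicit identity in arithmetic $K$-theory for a single hermitian vector bundle and then to recognise that identity as the arithmetic Grothendieck--Riemann--Roch theorem for closed immersions of \cite{BurgosLitcanu:SingularBC}, transported to the present framework of currents with controlled wave front set. Since we have already fixed the homogeneous theory $T^{h}$, whose associated additive genus is $S=0$, the correction term in \eqref{eq:29} is absent and we must prove
\begin{displaymath}
  \chh(\ov{i}_{\ast}\alpha)=i_{\ast}\big(\chh(\alpha)\widehat{\Td}(\ov{i})\big)
\end{displaymath}
in $\bigoplus_{p}\cha^{p}_{\QQ}(\mathcal{P},\mathcal{D}_{\D,\as}(P,i_{\ast}(W)))$, where $\mathcal{P}=\PP^{n}_{\mathcal{Y}}$. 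By Theorem \ref{thm:6} it suffices to verify this after applying the class functor to $\widehat{K}_{0}$. Both sides are then additive homomorphisms on $\widehat{K}_{0}(\mathcal{X},\mathcal{D}_{\D,\as}(X,W))_{\QQ}$ (on the right because multiplication by $\widehat{\Td}(\ov{i})$ and $i_{\ast}$ are additive, using Propositions \ref{prop:2} and \ref{prop:8}), so it is enough to treat the two kinds of generators $[\ov{\mathcal{E}},0]$, with $\ov{\mathcal{E}}$ a smooth hermitian vector bundle, and $[0,\widetilde{\eta}]$. For $\alpha=[0,\widetilde{\eta}]$ one has $\ov{i}_{\ast}\alpha=[0,\ov{i}_{\ttwist}(\widetilde{\eta})]$ by the vanishing of the torsion class of the zero object, and the desired equality is a direct consequence of the projection formula on arithmetic Chow groups (Theorem \ref{thm:2}) together with the definition $\ov{i}_{\ttwist}=i_{\ast}(\,\cdot\,\bullet\Td(T_{\ov{i}}))$.

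It remains to treat $\alpha=[\ov{\mathcal{E}},0]$ with $\ov{\mathcal{E}}$ a single smooth hermitian vector bundle on $\mathcal{X}$. Choose a finite locally free resolution $\mathcal{E}^{\prime\ast}\dashrightarrow i_{\ast}\mathcal{E}$ on $\mathcal{P}$ and arbitrary smooth hermitian metrics on the $\mathcal{E}^{\prime j}$, which determine a hermitian structure $\ov{i_{\ast}\mathcal{E}}^{\ast}$ via the hermitian cone. Unwinding the definition of $\ov{i}_{\ast}$ on $\widehat{K}_{0}$ and of $\chh$ one gets
\begin{displaymath}
  \chh(\ov{i}_{\ast}\alpha)=\sum_{j}(-1)^{j}\chh(\ov{\mathcal{E}}^{\prime j})-\amap\big(T(\ov{i},\ov{\mathcal{E}},\ov{i_{\ast}\mathcal{E}}^{\ast})\big).
\end{displaymath}
Applying $\zeta_{C}$ (equivalently, using the exact sequence \eqref{eq:1}), the equality to be proven projects in $\CH^{\ast}_{\QQ}(\mathcal{P})$ to $\sum_{j}(-1)^{j}\ch(\mathcal{E}^{\prime j})=i_{\ast}(\ch(\mathcal{E})\Td(T_{i}))$, which is the classical Grothendieck--Riemann--Roch theorem for the regular immersion $i$. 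Hence the difference of the two sides of the arithmetic identity lies in the image of $\amap$, and what remains is a purely archimedean statement: that $\amap\big(T(\ov{i},\ov{\mathcal{E}},\ov{i_{\ast}\mathcal{E}}^{\ast})\big)$ measures exactly the difference between $\sum_{j}(-1)^{j}\chh(\ov{\mathcal{E}}^{\prime j})$ and $i_{\ast}(\chh(\ov{\mathcal{E}})\widehat{\Td}(T_{\ov{i}}))$ in $\bigoplus_{p}\cha^{p}_{\QQ}(\mathcal{P},\mathcal{D}_{\D,\as}(P,i_{\ast}(W)))$.

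This last point is the core of the argument and the place where the analytic input enters. By construction of the homogeneous generalized analytic torsion theory in \cite{BurgosFreixasLitcanu:GenAnTor} (which rests on \cite{BurgosLitcanu:SingularBC}), the current $T^{h}(\ov{i},\ov{\mathcal{E}},\ov{i_{\ast}\mathcal{E}}^{\ast})$ attached to a closed immersion coincides, up to sign and the fixed normalization conventions, with the singular Bott--Chern current of Bismut--Gillet--Soul\'e as revisited in \emph{loc. cit.}; in particular its wave front set is contained in $N_{i}$, hence in $i_{\ast}(W)$, and it satisfies the differential equation transgressing the Grothendieck--Riemann--Roch identity for $i$ at the level of currents. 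The arithmetic Grothendieck--Riemann--Roch theorem for closed immersions proved in \cite{BurgosLitcanu:SingularBC}, which is already formulated in terms of currents with wave front set in $N_{i}$, is precisely the assertion that $\amap$ of this singular current realises the asserted difference of arithmetic characteristic classes; combined with the identification of $\za^{\ast}(\,\cdot\,,\mathcal{D}_{\D,\as}(\,\cdot\,,S))$ of Proposition \ref{prop:1} and with $F_{\infty}$-invariance (automatic here since we work on the real schemes $X$, $P$), this yields the identity for $\alpha=[\ov{\mathcal{E}},0]$. Together with the case $\alpha=[0,\widetilde{\eta}]$ and the additivity observed above, this proves the lemma. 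The step I expect to be the main obstacle is precisely this reconciliation: matching $T^{h}(\ov{i},-)$ with the singular Bott--Chern currents of \cite{BurgosLitcanu:SingularBC}, keeping track of all normalization factors (the implicit powers of $2\pi i$, signs, and the $\Td$-twist built into $\ov{i}_{\ttwist}$) and of the wave-front-set bookkeeping needed to land in $\mathcal{D}_{\D,\as}(P,i_{\ast}(W))$; once that translation is made, the immersion case of \emph{loc. cit.} does the rest.
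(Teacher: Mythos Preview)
Your proposal is correct and follows essentially the same approach as the paper: reduce to arithmetic $K$-theory via Theorem~\ref{thm:6}, then invoke the arithmetic Grothendieck--Riemann--Roch theorem for closed immersions established in \cite[Thm.~10.28]{BurgosLitcanu:SingularBC}. Your decomposition into the generators $[\ov{\mathcal{E}},0]$ and $[0,\widetilde{\eta}]$ is a clean way to organise the argument that the paper leaves implicit.

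One technical point you gloss over that the paper makes explicit: the result \cite[Thm.~10.28]{BurgosLitcanu:SingularBC} is stated under the assumption that the hermitian structure on $T_{i}$ is induced by a smooth hermitian metric on the normal bundle $N_{\mathcal{X}/\PP^{n}_{\mathcal{Y}}}$, whereas the hermitian structure on $\ov{i}$ here is the unique one forced by $\ov{f}=\ov{\pi}\circ\ov{i}$ and need not be of this form. The passage to an arbitrary hermitian structure on $i$ requires the anomaly formulas, namely Theorem~\ref{thm:4} on the Chow side and \cite[Prop.~7.4]{BurgosFreixasLitcanu:GenAnTor} on the torsion side. This is exactly the ``reconciliation'' you flag as the main obstacle, so you have correctly identified where the care is needed; you just did not name the tool.
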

\begin{proof}
In \cite[Thm 10.28]{BurgosLitcanu:SingularBC}, the authors prove the
theorem for direct images by closed immersions, defined on arithmetic
$K$ groups. They suppose as well that the hermitian structure on
$T_{i}$ is given by a smooth hermitian metric on
$N_{\mathcal{X}/\PP^{n}_{\mathcal{Y}}}$. Finally, the result in
\emph{loc,. cit.} holds in
$\bigoplus_{p}\cha^{p}_{\QQ}(\mathcal{Y},\mathcal{D}_{\D, \as}(Y))$. 

By the anomaly formula provided by Theorem \ref{thm:4} and the anomaly
formula of generalized analytic torsion theories
\cite[Prop. 7.4]{BurgosFreixasLitcanu:GenAnTor} for change of
hermitian structure on the tangent complex, one can extend \cite[Thm
10.28]{BurgosLitcanu:SingularBC} to arbitrary hermitian structures on
$i$, in particular the one we fixed. Also, a careful proof reading of
the proof in \emph{loc. cit.} shows that the theorem can be adapted to
allow
$\alpha\in\widehat{K}_{0}(\mathcal{X},\mathcal{D}_{\D,\as}(X,W))$,
taking values in $\bigoplus_{p}\cha^{p}_{\QQ}(\PP^{n}_{Y},\mathcal{D}_{\D,
  \as}(\PP^{n}_{Y},i_{\ast}W))$. Finally, to get the result for
$\ov{i}_{\ast}$ on the arithmetic derived category, we use the class
functor to arithmetic $K$-theory, the commutativity Theorem
\ref{thm:6} and that the Chern character on arithmetic derived
category factors, by construction, through arithmetic $K$ groups (see
Notation \ref{not:1}).
\end{proof}

\begin{lemma}
To prove Theorem \ref{thm:5} for $\ov{\pi}$, it is enough to prove it
when $\mathcal{Y}=\Spec A$ and $\alpha=\ov{\OO(k)}$, $-n\leq k\leq 0$,
where the chosen hermitian structure on $\OO(k)$ is the Fubini-Study
metric. 
\end{lemma}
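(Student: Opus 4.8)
The plan is to carry out the classical two-step reduction: first cut down to line bundles of the form $\pi^{\ast}\ov{\mathcal G}\otimes\ov{\OO(k)}$ by the projective bundle formula, and then use the projection formula together with flat base change to descend to $\mathcal Y=\Spec A$. Throughout write $X=\PP^{n}_{\mathcal Y}$.

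\emph{Step 1: reduction to twists of $\OO(k)$.} Since $\chh$ on $\hDb$ factors through $\widehat K_{0}$ (Notation~\ref{not:1}) and $\ov{\pi}_{\ast}$ is compatible with the class functor (Theorem~\ref{thm:6}), the left-hand side of \eqref{eq:29} depends only on the class of $\alpha$ in $\widehat K_{0}(X,\mathcal D_{\D,\as}(X,W))$ and is additive in it; moreover $\ov{\pi}_{\ast}$ and $\chh$ commute with direct sums and negate under shift. On the right-hand side the same is true for $\chh(\alpha)$, and the correction term behaves additively for \emph{orthogonal} direct sums because the Chern character \emph{form} $\ch(\mathcal F^{\ast}_{\CC})$ does. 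Hence it suffices to prove \eqref{eq:29} on a generating family. By the projective bundle formula, $K_{0}(X)$ is generated as a $K_{0}(\mathcal Y)$-module via $\pi^{\ast}$ by $[\OO(k)]$, $-n\le k\le 0$ (the Euler--Koszul resolution expresses every $[\OO(m)]$ in terms of these $n+1$ classes), and by the $\widehat K_{0}$-analogue of Lemma~\ref{lemm:1}\,\ref{item:3} the kernel of $\widehat K_{0}(X,\mathcal D_{\D,\as}(X,W))\to K_{0}(X)$ is spanned by classes $[0,\eta]$. Therefore the class of any $\alpha$ can be realized by an orthogonal direct sum of shifts of objects $\pi^{\ast}\ov{\mathcal G}_{j}\otimes\ov{\OO(k_{j})}$ ($-n\le k_{j}\le 0$) together with an object $[0,\eta]$, and we are reduced to these two types of generators. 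For $[0,\eta]$ one checks \eqref{eq:29} directly: $\pi_{\ast}0=0$, the analytic torsion of the zero object vanishes, and $\ch$ of the zero complex is $0$, so both sides collapse to $\amap\!\big(\ov{\pi}_{\ttwist}(\eta)\big)$, using the module law $\amap(\eta)\cdot\widehat{\Td}(\ov{\pi})=\amap\big(\eta\bullet\Td(T_{\pi_{\CC}})\big)$, the commutation $\pi_{\ast}\circ\amap=\amap\circ\pi_{\ast}$, and the definition of $\ov{\pi}_{\ttwist}$.

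\emph{Step 2: projection formula and base change.} For $\alpha=\pi^{\ast}\ov{\mathcal G}\otimes\ov{\OO(k)}$ the projection formula on arithmetic derived categories (Proposition~\ref{prop:5}) gives $\ov{\pi}_{\ast}\alpha=\ov{\mathcal G}\otimes\ov{\pi}_{\ast}\ov{\OO(k)}$, so the left-hand side of \eqref{eq:29} equals $\chh(\ov{\mathcal G})\cdot\chh(\ov{\pi}_{\ast}\ov{\OO(k)})$ by multiplicativity of $\chh$. On the right-hand side, multiplicativity of $\chh$, of $\ch$ and of the Todd class, compatibility with pull-back (Proposition~\ref{prop:6}), and the projection formula on arithmetic Chow groups (Theorem~\ref{thm:2}, together with its analogue for differential forms) factor out $\chh(\ov{\mathcal G})$, leaving
\[
\chh(\ov{\mathcal G})\cdot\Big(\pi_{\ast}\big(\chh(\ov{\OO(k)})\widehat{\Td}(\ov{\pi})\big)-\amap\big(\pi_{\ast}(\ch(\OO(k))\Td(T_{\pi_{\CC}})S(T_{\pi_{\CC}}))\big)\Big).
\]
Thus \eqref{eq:29} for $\alpha$ is equivalent to \eqref{eq:29} for $\ov{\OO(k)}$ over the same base $\mathcal Y$. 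Finally, $\ov{\pi}$ is the base change of $\ov{\pi}'\colon\PP^{n}_{A}\to\Spec A$ along the flat morphism $p\colon\mathcal Y\to\Spec A$, and $\ov{\OO(k)}$ on $\PP^{n}_{\mathcal Y}$ is the pull-back of $\ov{\OO(k)}$ on $\PP^{n}_{A}$ for the Fubini--Study metrics. Applying $p^{\ast}$ to \eqref{eq:29} over $\Spec A$, and using the compatibility of $\ov{\pi}_{\ast}$, of fiber integration of currents, and of the torsion class with flat base change, together with the compatibility of $\chh$, $\widehat{\Td}(\ov{\pi})$ and $S(T_{\pi_{\CC}})$ with pull-back, yields \eqref{eq:29} for $\ov{\OO(k)}$ over $\mathcal Y$. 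The wave front set causes no difficulty: $\pi$ is smooth, so $N_{\pi}=\emptyset$, the sets $\pi_{\ast}(W)$ and their pull-backs behave in the obvious way, and $W$ enters only through the $[0,\eta]$ summand already treated.

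The step I expect to be delicate is the base-change identity $p^{\ast}\circ\ov{\pi}'_{\ast}=\ov{\pi}_{\ast}\circ {p'}^{\ast}$ at the level of $\hDb$: it rests on the flat base-change property of derived direct images and of fiber integration of currents, and on the behaviour of the generalized analytic torsion classes under base change, the last being read off from the axiomatic construction in \cite{BurgosFreixasLitcanu:GenAnTor}. The remaining verifications are formal manipulations with the compatibilities (pull-back, product, projection formula, class functor) already established in the preceding sections.
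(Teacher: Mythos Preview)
Your proof is correct and follows essentially the same strategy as the paper's: reduce to generators of the form $\pi^{\ast}\ov{\mathcal G}\otimes\ov{\OO(k)}$, then use the projection formula and base change along $p\colon\mathcal Y\to\Spec A$ to descend to $\ov{\pi}'\colon\PP^n_A\to\Spec A$.

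The packaging differs slightly. The paper invokes the generation of $\Db(\PP^n_{\mathcal Y})$ as a triangulated category by sheaves $\pi^{\ast}\mathcal G\otimes p'^{\ast}\OO(k)$ (citing \cite[Cor.~4.11]{BurgosFreixasLitcanu:GenAnTor}) and then appeals to the anomaly formulas \cite[Prop.~7.4, 7.6]{BurgosFreixasLitcanu:GenAnTor} to pass from an arbitrary object to these generators; this handles in one stroke both the change of hermitian structure and the additivity along distinguished triangles. You instead work at the $\widehat K_{0}$-level, generating $K_{0}(\PP^n_{\mathcal Y})$ by $[\pi^{\ast}\mathcal G\otimes\OO(k)]$ and treating the kernel classes $[0,\eta]$ by hand. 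Both are legitimate; your route makes the reduction to $\widehat K_{0}$ explicit, while the paper's route stays at the derived-category level and hides the $[0,\eta]$ computation inside the anomaly formulas.

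Two small points. First, the paper has already reduced (in the paragraph preceding the lemma) to the homogeneous theory $S=0$, so the correction term you carefully carry along is in fact zero here; your argument is not wrong, just more general than needed. Second, your phrase ``realized by an orthogonal direct sum of shifts'' is a bit loose: what you really use is that both sides of \eqref{eq:29} factor through $\widehat K_{0}$ and are additive there. For the correction term this is cleaner than ``additive for orthogonal direct sums'': since $\ch(\ov{\mathcal F}^{\ast}_{\CC})$ changes by an exact form under change of hermitian structure and $\amap$ kills exact forms, the correction term depends only on the class of $\mathcal F^{\ast}$ in $K_{0}$. Finally, your flagged base-change identity $p^{\ast}\ov{\pi}'_{\ast}=\ov{\pi}_{\ast}{p'}^{\ast}$ is exactly what the paper uses (without further comment) in the displayed chain of equalities for $\chh(\ov{\pi}_{\ast}\alpha)$; it is indeed the one non-formal ingredient.
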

\begin{proof}
The derived category $\Db(\PP^{n}_{\mathcal{Y}})$ is generated by
coherent sheaves of the form $\pi^{\ast}\mathcal{G}\otimes
p^{\prime\ast}\OO(k)$
\cite[Cor. 4.11]{BurgosFreixasLitcanu:GenAnTor}. By the anomaly
formulas \cite[Prop. 7.4, Prop. 7.6]{BurgosFreixasLitcanu:GenAnTor},
we thus reduce to prove the theorem for $\alpha$ of the form
$\pi^{\ast}\ov{\mathcal{G}}\otimes p^{\prime\ast}\ov{\OO(k)}$, for any
metric on $\mathcal{G}$. On the one hand, by the formulas in Prop
\ref{prop:5}, the multiplicativity and pull-back functoriality
(Prop. \ref{prop:6}) of the Chern character, we have 
\begin{displaymath}
  \begin{split}
    \chh(\ov{\pi}_{\ast}\alpha)=&\chh(\ov{\mathcal{G}}\otimes\ov{\pi}_{\ast}p^{\prime\ast}\ov{\OO(k)})\\
    =&\chh(\ov{\mathcal{G}})\chh(\ov{\pi}_{\ast}p^{\prime\ast}\ov{\OO(k)})\\
    &\hspace{0.3cm}=\chh(\ov{\mathcal{G}})\chh(p^{\ast}\ov{\pi}^{\prime}_{\ast}\ov{\OO(k)})\\
    &\hspace{0.6cm}=\chh(\ov{\mathcal{G}})p^{\ast}\chh(\ov{\pi}^{\prime}_{\ast}\ov{\OO(k)}).
  \end{split}
\end{displaymath}
Here we recall $\pi^{\prime}:\PP^{n}_{A}\rightarrow\Spec A$ is the
structure morphism, that we endow with the Fubini-Study metric. On the
other hand, we similarly prove
\begin{displaymath}
  \begin{split}
    \pi_{\ast}(\chh(\pi^{\ast}\ov{\mathcal{G}}\otimes p^{\prime\ast}\ov{\OO(k)})\widehat{\Td}(\ov{\pi}))
    =&\chh(\ov{\mathcal{G}})\pi_{\ast}(p^{\prime\ast}\chh(\ov{\OO(k)})p^{\prime\ast}\widehat{\Td}(\ov{\pi}^{\prime}))\\
    &=\chh(\ov{\mathcal{G}})p^{\ast}\pi^{\prime}_{\ast}(\chh(\ov{\OO(k)})\widehat{\Td}(\ov{\pi^{\prime}})).
  \end{split}
\end{displaymath}
We thus reduce to prove the theorem for $\ov{\pi}^{\prime}$ and
$\alpha=\ov{\OO(k)}$, as was to be shown.
\end{proof}
\paragraph{The case of projective spaces.} To prove Theorem
\ref{thm:5} in full generality, it remains to treat the case of the
projection $\ov{\pi}\colon\PP^{n}_{A}\rightarrow\Spec A$, where we
endow $\pi$ with the Fubini-Study metric. Since this projection is the
pull-back of the projection
$\ov{\pi}\colon\PP^{n}_{\ZZ}\rightarrow\Spec \ZZ$ it is enough to
treat the case when $A=\ZZ$. 

Furthermore, we showed that
it is enough to consider $\alpha$ of the form $\ov{\OO(k)}$, $-n\leq
k\leq 0$, with the Fubini-Study metric as well and any metric of the
direct image $\pi_{\ast}\OO(k)$.
In \cite[Def. 5.7]{BurgosFreixasLitcanu:GenAnTor}, we introduced the
main characteristic numbers of $T^{h}$,
\begin{displaymath}
  t_{n,k}^{h}=T^{h}(\ov{\pi},\ov{\OO(k)},\ov{\pi_{\ast}\OO(k)}),\
  -n\le k \le 0,
\end{displaymath}
where $\pi_{\ast}\OO(k)$ was endowed with its $L^{2}$ metric. Since we
will only consider the homogeneous analytic torsion we will shorthand
$t^{h}_{n,k}=t_{n,k}$. 

 We will
denote by $\ov 0$ the trivial vector bundle with trivial hermitian
structure and, for any $X$, we denote by $\ov \OO_{X}$ the structural
sheaf with the metric $\|1\|=1$.
For
$-n\le k < 0$, the complex $\ov{\pi_{\ast}\OO(k)}$ can be represented
by $\ov 0$, while 
the complex $\ov{\pi_{\ast}\OO(0)}$ can be represented by the
structural sheaf $\OO_{\ZZ}$ with the metric $\|1\|=1/n!$. Since this
metric depends on $n$ it will be simpler to consider the complexes
$\ov{\pi_{\ast}\OO(k)}'=\ov{\pi_{\ast}\OO(k)}$ for $-n\le k < 0$ and 
$\ov{\pi_{\ast}\OO(k)}'=\ov \OO_{\ZZ}$.
Then we write 
\begin{displaymath}
  t'_{n,k}=T^{h}(\ov{\pi},\ov{\OO(k)},\ov{\pi_{\ast}\OO(k)}'),\
  -n\le k \le 0.
\end{displaymath}
These characteristic numbers satify
\begin{displaymath}
  t'_{n,k}=
  \begin{cases}
    t_{n,k},&\text{ if }-n\le k<0,\\
    t_{n,0}-(1/2)\log(n!),& \text{ if }k=0.
  \end{cases}
\end{displaymath} 
Clearly it is equivalent to work with the characteristic numbers
$t_{n,k}$ or with $t'_{n,k}$.

In order to finish the the proof of Theorem \ref{thm:5} it only
remains to show the following particular cases.

\begin{theorem}\label{thm:7}
For every $0\leq k\leq n$, we have the equality in $\cha^{1}(\Spec
\ZZ)=\RR$ 
\begin{equation}\label{eq:32}
  \amap(t'_{n,-k})=\chh(\ov{\pi_{\ast}\OO(-k)}')-\
  \pi_{\ast}(\chh(\ov{\OO(-k)})\widehat{\Td}(\ov{\pi})).
\end{equation}
\end{theorem}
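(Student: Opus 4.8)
The strategy is to recognise \eqref{eq:32} as the exact Grothendieck--Riemann--Roch formula of Theorem \ref{thm:5} (the case $S=0$) for the single pair $(\ov{\pi},\ov{\OO(-k)})$, and to deduce that formula from the classical arithmetic Riemann--Roch theorem applied to the smooth morphism $\pi$ together with the comparison, established in \cite{BurgosFreixasLitcanu:GenAnTor}, between the homogeneous theory $T^{h}$ and the holomorphic (Ray--Singer / Bismut--K\"ohler) analytic torsion theory $T^{\hol}$. First I would unwind the definitions: since $t'_{n,-k}=T^{h}(\ov{\pi},\ov{\OO(-k)},\ov{\pi_{\ast}\OO(-k)}')$ and $\ov{\pi}_{\ast}\ov{\OO(-k)}$ has class $[\ov{\pi_{\ast}\OO(-k)}',-t'_{n,-k}]$ by the very definition of $\ov{\pi}_{\ast}$, the equation \eqref{eq:32} is equivalent to
\begin{displaymath}
  \chh(\ov{\pi}_{\ast}\ov{\OO(-k)})=\pi_{\ast}\bigl(\chh(\ov{\OO(-k)})\,\widehat{\Td}(\ov{\pi})\bigr)
\end{displaymath}
in $\cha^{1}(\Spec\ZZ)$. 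Here $\pi$ is smooth, so $N_{\pi}=\emptyset$, every wave front set occurring is empty, and by Lemma \ref{lemm:1}~\ref{item:3} (using $\CH^{1}(\Spec\ZZ)=0$ and the vanishing of the regulator on $\CH^{1,0}(\Spec\ZZ)$) the map $\amap$ identifies $\RR$ with $\cha^{1}(\Spec\ZZ)$. I would also record that $\chh(\ov{\pi_{\ast}\OO(-k)}')$ has zero component in degree one: for $0<k\le n$ the complex $\ov{\pi_{\ast}\OO(-k)}'$ equals $\ov 0$, and for $k=0$ it is $\ov{\OO}_{\ZZ}$ with $\|1\|=1$, whose arithmetic first Chern class vanishes. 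Hence everything reduces to the numerical identity $t'_{n,-k}=-\bigl[\pi_{\ast}(\chh(\ov{\OO(-k)})\widehat{\Td}(\ov{\pi}))\bigr]^{(1)}$.

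Next I would bring in $T^{\hol}$. By the classification of theories of generalized analytic torsion \cite[Thm.~7.7, Thm.~7.14]{BurgosFreixasLitcanu:GenAnTor}, $T^{\hol}$ corresponds to a fixed multiple of the Gillet--Soul\'e $R$-genus, whereas $T^{h}$ corresponds to the genus $0$; consequently the difference $T^{h}(\ov{\xi})-T^{\hol}(\ov{\xi})$ equals the universal characteristic-class expression $-\pi_{\ast}(\ch(\OO(-k))\,\Td(T_{\pi})\,\mathcal{R}(T_{\pi}))$, where $\mathcal{R}$ is the additive class attached to that genus and $\ov{\xi}=(\ov{\pi},\ov{\OO(-k)},\ov{\pi_{\ast}\OO(-k)}')$. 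Since the base is a point, both members are genuine real numbers: $T^{h}(\ov{\xi})=t'_{n,-k}$, while $T^{\hol}(\ov{\xi})$ is the Ray--Singer analytic torsion of $\ov{\OO(-k)}$ for the Fubini--Study metric (the discrepancy between the metric on $\pi_{\ast}\OO(-k)$ used here and the $L^{2}$ metric being absorbed by the anomaly formula \cite[Prop.~7.4]{BurgosFreixasLitcanu:GenAnTor}). I would then apply the classical arithmetic Riemann--Roch theorem for the determinant of the cohomology \cite{GilletSoule:aRRt} (equivalently, the degree-one part of Gillet--R\"ossler--Soul\'e) to the smooth projective morphism $\pi\colon\PP^{n}_{\ZZ}\to\Spec\ZZ$ and the line bundle $\ov{\OO(-k)}$ with the Fubini--Study metric: this computes the Quillen metric on $\det R\pi_{\ast}\OO(-k)$, hence $T^{\hol}(\ov{\xi})$, as $\bigl[\pi_{\ast}(\chh(\ov{\OO(-k)})\widehat{\Td}(\ov{\pi}))\bigr]^{(1)}$ corrected by exactly $\amap$ of the same $R$-genus integral (after taking into account the normalization factor relating the present arithmetic Chow groups to those of \emph{loc. cit.}).

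Substituting the expression for $T^{\hol}(\ov{\xi})$ into the comparison of the previous paragraph, the two $R$-genus contributions cancel, leaving precisely the numerical identity $t'_{n,-k}=-\bigl[\pi_{\ast}(\chh(\ov{\OO(-k)})\widehat{\Td}(\ov{\pi}))\bigr]^{(1)}$ obtained in the first step; this proves Theorem \ref{thm:7}. The cancellation of the $R$-genus terms is the conceptual core of the argument: it is the formal manifestation of the fact that the homogeneous theory is the one yielding a Grothendieck--Riemann--Roch formula without correction term. The main obstacle is not conceptual but a matter of bookkeeping: one must pin down consistently the normalization constant between the arithmetic Chow groups of \cite{GilletSoule:aRRt} and those used here, the precise multiple of the $R$-genus realizing $T^{\hol}$, the factor relating the Ray--Singer torsion to the Quillen metric and to $T^{\hol}(\ov{\xi})$, and the sign in $\chh([0,\eta])=\amap(\eta)$, since the cancellation only goes through once all of these are matched. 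Finally, evaluating $\bigl[\pi_{\ast}(\chh(\ov{\OO(-k)})\widehat{\Td}(\ov{\pi}))\bigr]^{(1)}$ explicitly by means of the metrized Euler sequence $0\to\ov{\OO}\to\ov{\OO(1)}^{\oplus(n+1)}\to\ov{T}_{\pi}\to 0$, its Bott--Chern secondary class, and an integration over $\PP^{n}$, produces the closed formula for the characteristic numbers $t_{n,k}$ announced in the introduction.
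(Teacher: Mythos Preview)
Your approach is valid in principle but takes a genuinely different route from the paper. The paper proceeds by induction on $k$: the base case $k=0$ (Proposition~\ref{prop:7}) is handled via the diagonal embedding $\Delta\colon\PP^n\hookrightarrow\PP^n\times\PP^n$, the Koszul resolution of the diagonal, and the transitivity axiom for $T^h$; the inductive step passes from $k$ to $k+1$ using a hyperplane section $s\colon\PP^{n-1}\hookrightarrow\PP^n$ and the Koszul complex of $s$. In both steps the only external ingredient is the arithmetic Riemann--Roch theorem for closed immersions (Lemma~\ref{lemm:4}), proven in \cite{BurgosLitcanu:SingularBC}. Thus the paper's argument stays entirely within the axiomatic framework and shows that the formal properties of a generalized analytic torsion theory alone force the Riemann--Roch formula---a point the authors explicitly emphasize, and which in particular re-derives the generically smooth case without invoking the Bismut--Lebeau or Bismut immersion theorems underlying \cite{GilletSoule:aRRt} and \cite{GilletRoesslerSoule:_arith_rieman_roch_theor_in_higher_degrees}.

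Your route, by contrast, imports the classical Gillet--Soul\'e theorem (hence that deep analytic machinery) as a black box and transfers it to $T^h$ via the classification. This is shorter, but it forfeits the self-containment that is one of the paper's goals; in particular, it would make the present paper's Theorem~\ref{thm:5} logically dependent on the earlier smooth-case theorems rather than giving an independent proof. One caution on what you call ``bookkeeping'': the assertion that the genus attached to $T^{\hol}$ in the classification of \cite{BurgosFreixasLitcanu:GenAnTor} is precisely (the relevant multiple of) the Gillet--Soul\'e $R$-genus is itself a substantive input---it is essentially equivalent to Bismut's immersion formula---and you should pin down exactly where this is established before treating the cancellation as purely formal.
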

The proof will proceed by induction. The next proposition treats the first case.
\begin{proposition}\label{prop:7}
Equation \eqref{eq:32} holds for $k=0$.
\end{proposition}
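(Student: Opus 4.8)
The plan is to specialize \eqref{eq:32} to $k=0$ and reduce it to a single numerical identity in $\cha^1(\Spec\ZZ)=\RR$ between the homogeneous torsion number $t'_{n,0}$ and the archimedean part of the arithmetic Todd class of projective space. First I would unwind the three terms. The line bundle $\OO(0)=\OO_{\PP^n_\ZZ}$ with its Fubini--Study metric is the trivial hermitian bundle, so $\chh(\ov{\OO(0)})=1$ and $\pi_\ast\bigl(\chh(\ov{\OO(0)})\widehat{\Td}(\ov{\pi})\bigr)=\pi_\ast\widehat{\Td}(\ov{\pi})$ with $\widehat{\Td}(\ov{\pi})=\widehat{\Td}(T_{\PP^n_\ZZ/\ZZ},h_{FS})$; and by definition $\ov{\pi_\ast\OO(0)}'=\ov{\OO_\ZZ}$ is the structure sheaf with $\|1\|=1$, whose arithmetic Chern character has vanishing component in $\cha^1(\Spec\ZZ)$. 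Thus \eqref{eq:32} for $k=0$ becomes the assertion
\begin{displaymath}
  \amap(t'_{n,0})=-\pi_\ast\widehat{\Td}(T_{\PP^n_\ZZ/\ZZ},h_{FS})\quad\text{in }\cha^1(\Spec\ZZ)=\RR.
\end{displaymath}
Since $\CH^1(\Spec\ZZ)=0$, the right hand side is purely archimedean: it is, up to normalization, the fibre integral $\pi_{\CC\ast}$ of the Todd form and its Bott--Chern corrections on $\PP^n(\CC)$ with the Fubini--Study metric, i.e.\ the ``arithmetic Todd number'' of $\PP^n$ computed by Gillet--Soul\'e \cite{GilletSoule:aRRt}. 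One may either quote their evaluation (an explicit combination of $\sum_{j}1/j$-type sums, via Zagier's lemma), or compute it by the projective bundle formula together with the multiplicativity of $\widehat{\Td}$ (Theorem \ref{thm:4}) applied to the metrized Euler sequence, which reduces it to the powers $\widehat c_1(\ov{\OO(1)})^{j}$ on $\PP^n_\ZZ$.

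The left hand side is where the homogeneity of the chosen torsion theory enters. Recall $t'_{n,0}=t_{n,0}-\tfrac12\log n!$ with $t_{n,0}=T^h(\ov{\pi},\ov{\OO(0)},\ov{\pi_\ast\OO(0)}_{L^2})$, the $L^2$-metric on $\pi_\ast\OO=\OO_\ZZ$ being the one with $\|1\|=1/n!$. By the classification of generalized analytic torsion theories \cite[Thm.~7.7, Thm.~7.14]{BurgosFreixasLitcanu:GenAnTor}, the homogeneous theory $T^h$ (genus $0$) differs from the Bismut--K\"ohler holomorphic analytic torsion theory $T^{BK}$ by the contribution of the genus $S^{BK}$ of $T^{BK}$; applied to the datum above this gives $t_{n,0}=\tau^{BK}_n-\pi_{\CC\ast}\bigl(\Td(T_{\pi,\CC})S^{BK}(T_{\pi,\CC})\bigr)$ up to normalization, where $\tau^{BK}_n$ is the Ray--Singer/Bismut--K\"ohler analytic torsion of $(\PP^n,\OO,h_{FS})$. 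On the other hand the \emph{classical} arithmetic Riemann--Roch theorem of Gillet--Soul\'e \cite{GilletSoule:aRRt}, applied to the smooth projective morphism $\ov{\pi}\colon\PP^n_\ZZ\to\Spec\ZZ$ and to $\ov{\OO(0)}$ for the determinant of the cohomology --- which is precisely the degree one statement we need --- yields, in $\cha^1(\Spec\ZZ)$,
\begin{displaymath}
  \amap(\tau^{BK}_n)=\chh(\ov{\pi_\ast\OO(0)}_{L^2})-\pi_\ast\widehat{\Td}(\ov{\pi})+\amap\bigl(\pi_{\CC\ast}(\Td(T_{\pi,\CC})S^{BK}(T_{\pi,\CC}))\bigr).
\end{displaymath}
This use of the classical theorem is legitimate: it is an already published, external result and does not rely on Theorem \ref{thm:5}. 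Substituting the two displayed relations into one another the $S^{BK}$-genus terms cancel, and the passage from $\ov{\pi_\ast\OO(0)}_{L^2}$ to $\ov{\pi_\ast\OO(0)}'=\ov{\OO_\ZZ}$ --- which, by the very definition $t'_{n,0}=t_{n,0}-\tfrac12\log n!$ and the anomaly formula for the metric change, shifts $\chh$ and the torsion by matching amounts --- turns the result into the identity displayed at the end of the previous paragraph. Hence \eqref{eq:32} holds for $k=0$.

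The only genuine difficulty is the bookkeeping of normalization constants, and I would set these up carefully before anything else: the implicit powers of $2\pi i$ of the conventions of \cite[Sec.~5.4]{BurgosKramerKuehn:cacg}, the normalization factor built into $\amap$, the precise shape of and the sign in front of the genus term in the classical arithmetic Riemann--Roch theorem, the constant in the $L^2$-metric on $\pi_\ast\OO$ coming from the normalization of $\int_{\PP^n(\CC)}\omega_{FS}^n$, and the resulting $\tfrac12\log n!$; all of these must be tracked so that the two occurrences of $\pi_{\CC\ast}(\Td\cdot S^{BK})$ genuinely cancel and no stray constant survives. No new geometric input is needed beyond \cite{BurgosFreixasLitcanu:GenAnTor} for the comparison of $T^h$ with $T^{BK}$ and \cite{GilletSoule:aRRt} for the arithmetic Todd genus of $\PP^n$. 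A self-contained alternative, avoiding the classical theorem, would be to evaluate $\tau^{BK}_n$ and $\pi_\ast\widehat{\Td}(\ov{\pi})$ separately by hand --- both being explicit combinations of $\zeta'(-j)$, respectively $\sum 1/j$, terms --- and to check the equality directly; this is longer but conceptually routine.
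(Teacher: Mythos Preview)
Your approach is valid but takes a genuinely different route from the paper. You bootstrap from the classical Gillet--Soul\'e arithmetic Riemann--Roch theorem for the smooth morphism $\ov\pi\colon\PP^n_\ZZ\to\Spec\ZZ$, combined with the classification theorem that compares $T^{h}$ to the Bismut--K\"ohler theory $T^{BK}$; since the genus corrections match by construction, the identity drops out. The paper instead gives an argument that stays entirely inside the axiomatic framework and never invokes the classical theorem. It considers the diagonal $\Delta\colon\PP^n_\ZZ\hookrightarrow\PP^n_\ZZ\times\PP^n_\ZZ$ with the second projection $p_2$, uses the compatibility axiom applied to $\ov\Id=\ov p_2\circ\ov\Delta$ to obtain $T(\ov p_2,\ov K,\ov\OO_{\PP^n})+\ov p_{2\ttwist}T(\ov\Delta,\ov\OO_{\PP^n},\ov K)=0$ for the Koszul resolution $\ov K$ of $\Delta_\ast\OO_{\PP^n}$, and then feeds in the arithmetic Riemann--Roch for the closed immersion $\Delta$ (Lemma~\ref{lemm:4}) together with the decomposition of $\ov K$ into the twists $p_1^\ast\ov{\OO(-i)}\otimes p_2^\ast\Lambda^i\ov Q^\vee$; pushing down by $\ov\pi_{1\ttwist}$ isolates $t'_{n,0}$ and produces exactly \eqref{eq:32}. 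Your route is shorter and more conceptual, but it imports the deep analysis behind the classical theorem (Bismut--Lebeau, etc.) and the verification that $T^{BK}$ really is a generalized torsion theory with the expected genus; the paper's route is longer but establishes that the full arithmetic Riemann--Roch follows from nothing more than the torsion axioms plus the closed-immersion case, which is the structural point the authors are after (and is what lets them later pin down the numbers $t_{n,k}$ from the axioms alone). Your final remark about a self-contained direct evaluation is closer in spirit to what the paper does, though the paper's diagonal trick is cleaner than computing $\tau^{BK}_n$ and the arithmetic Todd number separately.
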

\begin{proof}
The proof exploits the behavior of generalized analytic torsion with
respect to composition of morphisms. Let us consider the diagram
\begin{displaymath}
  \xymatrix{
    \PP^n_{\ZZ} \ar[rd]_{\Id} \ar[r]^{\Delta\ \ \ }   & \PP^n_{\ZZ}
    \underset{\ZZ}{\times}\PP^n_{\ZZ} \ar[r]^{p_1}\ar[d]^{p_2} &
    \PP^n_{\ZZ} \ar[d]^{\pi}\\ 
    & \PP^n_{\ZZ} \ar[r]_{\pi_{1}} & \Spec \ZZ .}
\end{displaymath}
The Fubini-Studi metric on the tangent space $T_{\PP^{n}_{\CC}}$
is invariant under complex conjugation. It induces a 
metric on the tangent space of the product of projective spaces. On
each morphism on the above diagram we consider the relative hermitian
structure deduced by the hermitian metrics on each tangent space. With
this choice 
\begin{equation}
  \label{eq:24}
  \ov \Id=\ov p_{2}\circ \ov {\Delta },
\end{equation}
where the composition of relative hermitian structures is defined in
\cite[Definition 5.7]{BurgosFreixasLitcanu:HerStruc}. On the relative
tangent bundle  $T_{p_{2}}$
we consider the metric induced by the metric on $T_{\PP^{n}\times
  \PP^{n}}$.
Since the
short exact sequence
\begin{displaymath}
  0\longrightarrow
  \ov T_{p_{2}}\longrightarrow
  \ov T_{\PP^{n}\times \PP^{n}}\longrightarrow
  p_{2}^{\ast}\ov T_{\PP^{2}} \longrightarrow 0
\end{displaymath}
is orthogonaly split,
we deduce that the metric we consider on $\ov p_{2}$ agrees with the
metric on the vector bundle
$\ov T_{p_{2}}$ and this, in turn agrees with the metric on
$p_{1}^{\ast }T_{\PP^{n}}$. 

Let $\ov Q$ be the tautological quotient bundle on $\PP^{n}_{\ZZ}$ with
any hermitian metric invariant under complex conjugation. We denote by
$K$ the Koszul resolution of the diagonal and $\ov K$ the same
resolution with the induced metrics. Namely $\ov K$ is the complex
\begin{displaymath}
  0\to p_{2}^{\ast}\Lambda ^{n}\ov Q^{\vee}\otimes p_{1}^{\ast}\ov
  {\OO(-n)}
\to \dots\to p_{2}^{\ast}\ov Q^{\vee}\otimes p_{1}^{\ast}\ov
  {\OO(-1)}\to \ov \OO_{\PP^{n}\times\PP^{n}}\to 0,
\end{displaymath}
and there is an isomorphism in the derived category $K\to \Delta
_{\ast}\OO_{\PP^{n}_{\ZZ}}$. Since the theories of homogeneous torsion classes
for closed immersions and for projective spaces are compatible
\cite[Definition 6.2]{BurgosFreixasLitcanu:GenAnTor} the equation
\begin{equation}
  \label{eq:25}
  T(\ov p_{2},\ov K, \ov \OO_{\PP^{n}})+ \ov p_{2\ttwist}(T(\ov \Delta
  ,\ov \OO_{\PP^{n}},\ov K))=0
\end{equation}
holds. Then
\begin{align*}
  T(\ov p_{2},\ov K, \ov \OO_{\PP^{n}})&=
  T(\ov p_{2},\ov \OO_{\PP^{n}\times \PP^{n}},\ov \OO_{\PP^{n}})+
  \sum_{i=1}^{n}(-1)^{i}T(\ov p_{2}, p_{2}^{\ast}\Lambda ^{i}\ov
  Q^{\vee}\otimes p_{1}^{\ast}\ov
  {\OO(-i)},\ov 0)\\
  &=\pi_{1}^{\ast}T(\ov \pi ,\ov \OO_{ \PP^{n}},\ov
  \OO_{\ZZ})\bullet \ch(\ov \OO_{\PP^{n}})+\\
  &\qquad \qquad \sum_{i=1}^{n}(-1)^{i}\pi_{1}^{\ast}T(\ov \pi ,\ov {\OO
    (-i)},\ov 0)\bullet \ch(\Lambda ^{i}\ov Q^{\vee}) \\
  &=\sum_{i= 0}^{n}(-1)^{i}t_{n,-i}\bullet \ch(\Lambda ^{i}\ov
  Q^{\vee}). 
\end{align*}

Using that
\begin{displaymath}
  \pi _{1\ast}(\ch(\Lambda ^{i}\ov
  Q^{\vee})\Td(\ov \pi _{1}))=
  \begin{cases}
    1, &\text{ if }i=0,\\
    0, &\text{ otherwise,} 
  \end{cases}
\end{displaymath}
we deduce
\begin{equation}
  \label{eq:33}
  \ov \pi _{1\ttwist}(T(\ov p_{2},\ov K, \ov \OO_{\PP^{n}}))=t_{n,0}.
\end{equation}

By the arithmetic Riemann-Roch theorem for closed immersions
\begin{displaymath}
  \amap(T(\ov \Delta
  ,\ov \OO_{\PP^{n}},\ov
  K))=
  \sum_{i=0}^{n}(-1)^{i}p_{2}^{\ast}\chh(\Lambda ^{i}\ov
  Q^{\vee})\cdot
  p_{1}^{\ast}\chh(\ov{\OO(-i)})-\Delta _{\ast}(\chh(\ov
  \OO_{\PP^{n}})\widehat{\Td}(\ov \Delta )).
\end{displaymath}
For $i>0$
\begin{multline*}
  \pi _{1\ast}\left(p_{2\ast}\left(p_{2}^{\ast}\chh(\Lambda ^{i}\ov
  Q^{\vee})p_{1}^{\ast}\chh(\ov {\OO(-i)})\widehat {\Td}(\ov{p_{2}})\right)
  \widehat {\Td}(\ov{\pi} _{1})\right)\\
  =\pi _{1\ast}(\chh(\Lambda ^{i}\ov
  Q^{\vee})\widehat {\Td}(\ov{\pi} _{1}))\cdot \pi _{\ast}(\chh(\ov
  {\OO(-i)})\widehat{\Td}(\ov \pi )).
\end{multline*}
Since
\begin{displaymath}
  \zeta (\pi _{1\ast}(\chh(\Lambda ^{i}\ov
  Q^{\vee})\widehat {\Td}(\ov \pi _{1})))=
 \zeta(\pi _{\ast}(\chh(\ov
  {\OO(-i)})\widehat{\Td}(\ov \pi )))=0
\end{displaymath}
and $\Ker \zeta $ is a square zero ideal of the arithmetic Chow ring,
we deduce that, for $i>0$
\begin{equation}\label{eq:36}
  \pi _{1\ast}\left(p_{2\ast}\left(p_{2}^{\ast}\chh(\Lambda ^{i}\ov
  Q^{\vee})p_{1}^{\ast}\chh(\ov {\OO(-i)})\widehat {\Td}(\ov{p_{2}})\right)
  \widehat {\Td}(\ov \pi _{1})\right)=0.
\end{equation}
For $i=0$ we compute
\begin{displaymath}
  \pi _{1\ast}\left(p_{2\ast}\left(p_{2}^{\ast}\chh(\ov
  \OO_{\PP^{n}})p_{1}^{\ast}\chh(\ov \OO_{\PP^{n}})\widehat {\Td}(\ov{p_{2}})\right)
  \widehat {\Td}(\ov \pi _{1})\right)=\pi _{\ast}(\chh(\ov
\OO_{\PP^{n}})\widehat{\Td}(\ov \pi))^{2}.
\end{displaymath}
Using that $\chh(\ov \OO_{\PP^{n}})=1$ and that $\pi
_{\ast}(\widehat{\Td}(\ov \pi))-1\in \Ker \zeta $ we obtain that
\begin{equation}\label{eq:35}
  \pi _{\ast}(\chh(\ov
\OO_{\PP^{n}})\widehat{\Td}(\ov \pi))^{2}=-1+2\pi
_{\ast}(\widehat{\Td}(\ov \pi )).
\end{equation}
Furthermore, by the choice of metrics on the relative tangent complexes
\begin{equation}\label{eq:34}
  \pi _{1\ast}p_{2\ast}\Delta _{\ast}(\chh(\ov
  \OO_{\PP^{n}})\widehat{\Td}(\ov \Delta )\widehat{\Td}(\ov
  p_{2})\widehat{\Td}(\ov\pi _{1}))=
\pi _{\ast}(\chh(\ov
  \OO_{\PP^{n}})\widehat{\Td}(\ov\pi )).
\end{equation}
Using equations \eqref{eq:36}, \eqref{eq:35} and \eqref{eq:34} we
deduce that
\begin{equation}
  \label{eq:37}
  \amap(\ov\pi _{1\ttwist}(\ov p_{2\ttwist}(T(\ov \Delta ,\ov
  \OO_{\PP^{n}},\ov K))))=\pi _{\ast}(\chh(\ov
  \OO_{\PP^{n}})\widehat{\Td}(\ov\pi ))-\chh(\ov \OO_{\ZZ}).
\end{equation}
By equations \eqref{eq:25}, \eqref{eq:33} and \eqref{eq:37} we conclude
\begin{displaymath}
  \amap(t'_{n,0})=\chh(\ov \OO_{\ZZ})-\pi _{\ast}(\chh(\ov
  \OO_{\PP^{n}})\widehat{\Td}(\ov\pi ))
\end{displaymath}
proving the proposition.
\end{proof}
\begin{proof}[Proof of Theorem \ref{thm:7}] We now proceed with the
  induction step. We assume that equation \eqref{eq:32} holds for some
  $k\ge 0$ and all $n\ge k$. Fix now $n\ge k+1$. Consider the diagram 
\begin{displaymath}
  \xymatrix{
    \PP^{n-1}_{\ZZ} \ar[rd]_{\pi _{n-1}} \ar[r]^{s}   & \PP^n_{\ZZ}
    \ar[d]^{\pi _{n}}\\ 
    & \Spec \ZZ,}
\end{displaymath}
  where $s$ is the closed immersion induced by a section $s$ of
  $\OO_{\PP^{n}}(1)$. As in the proof of Proposition \ref{prop:7}, we
  consider the relative hermitian 
  structures defined by the Fubini-Study metric on the tangent
  bundles. In this way $\ov \pi _{n-1}=\ov \pi _{n}\circ \ov s$.

  We consider the Koszul complex
  \begin{displaymath}
    K_{n,k}\colon \OO_{\PP^{n}}(-k-1)\overset{s}{\longrightarrow }
    \OO_{\PP^{n}}(-k)
  \end{displaymath}
  and we denote by $\ov K_{n,k}$ the same complex provided with the
  Fubini-Study metrics.

  By the transitivity \cite[Definition
  7.1]{BurgosFreixasLitcanu:GenAnTor} of the homogeneous theory
  of analytic torsion, the equation
  \begin{multline*}
    T^{h}(\ov \pi _{n-1},\ov {\OO(-k)},\ov {\pi _{n-1\ast}\OO(-k)}')=\\
    T^{h}(\ov \pi _{n},\ov K_{n,k},\ov {\pi _{n-1\ast}\OO(-k)}')+
    \ov\pi _{n\ttwist}(T^{h}(\ov s,\ov {\OO(-k)},\ov K_{n,k})).
  \end{multline*}
  By definition $T^{h}(\ov \pi _{n-1},\ov {\OO(-k)},\ov {\pi
    _{n-1\ast}\OO(-k)}')=t'_{n-1,-k}$. By the choice of metrics, the
  exact sequence
  \begin{equation} \label{eq:38}
    0\to \ov {\pi _{n\ast}\OO(-k-1)}'
    \to \ov {\pi _{n\ast}\OO(-k)}' \to \ov {\pi
      _{n-1\ast}\OO(-k)}'\to 0
  \end{equation}
  is orthogonal split (all the terms appearing in this short exact
  sequence are either $\ov 0$ or $\ov
  \OO_{\ZZ}$). 
  This implies that
  \begin{multline*}
    T^{h}(\ov \pi _{n},\ov K_{n,k},\ov {\pi _{n-1\ast}\OO(-k)}')=\\
    T^{h}(\ov \pi _{n},\ov{\OO(-k)},\ov {\pi _{n\ast}\OO(-k)}')
    -T^{h}(\ov \pi _{n},\ov{\OO(-k-1)},\ov {\pi
      _{n\ast}\OO(-k-1)}')=\\
    t'_{n,-k}-t'_{n,-k-1}.
  \end{multline*}
  By Lemma \ref{lemm:4} (the case of closed immersions)
  \begin{displaymath}
    T^{h}(\ov s,\ov {\OO(-k)},\ov K_{n,k})=\chh(\ov {\OO(-k)})-
    \chh(\ov{\OO}(-k-1))-\ov{s}_{\ttwist}(\chh(\ov{\OO(-k)})).
  \end{displaymath}
This implies that
\begin{multline*}
  \ov{\pi} _{n\ttwist}(T^{h}(\ov s,\ov {\OO(-k)},\ov K_{n,k}))=\\ \ov{\pi}
  _{n\ttwist}\chh(\ov {\OO(-k)})- 
    \ov{\pi} _{n\ttwist} \chh(\ov{\OO}(-k-1))
    -\ov{\pi} _{n-1\ttwist}\chh(\ov{\OO(-k)}).
\end{multline*}
Summing up, we deduce
\begin{multline}\label{eq:39}
  \amap(t'_{n-1,-k}-t'_{n,-k}+t'_{n,-k-1})=\\
  -\ov{\pi} _{n-1\ttwist}\chh(\ov{\OO(-k)})+
  \ov{\pi} _{n\ttwist}\chh(\ov{\OO(-k)})+
  -\ov{\pi} _{n\ttwist}\chh(\ov{\OO(-k-1)}).
\end{multline}
Applying the induction hypothesis we get
\begin{displaymath}
  t'_{n,-k-1}=-\chh(\ov{\pi _{n-1\ast}\OO(-k)}')+
  \chh(\ov{\pi _{n\ast}\OO(-k)}')-\ov{\pi} _{n\ttwist}\chh(\ov{\OO(-k-1)}). 
\end{displaymath}
Using again that the exact sequence \eqref{eq:38} is orthogonally
split, we deduce that
\begin{displaymath}
  t'_{n,-k-1}=\chh(\ov{\pi _{n\ast}\OO(-k-1)}')-\ov{\pi} _{n\ttwist}\chh(\ov{\OO(-k-1)}). 
\end{displaymath}
completing the inductive step and proving the Theorem \ref{thm:7} and
therefore Theorem \ref{thm:5}.
\end{proof}

Once we have proved that the formal properties of an analytic torsion
theory imply the arithmetic Riemann-Roch theorem we can compute
easily the characteristic numbers $t_{n,k}$ for the homogeneous
analytic torsion and all $n\ge 0$ and $k\in \ZZ$. By the Theorem
\ref{thm:5} they satisfy
\begin{displaymath}
  \amap(t_{n,k})=\chh(\ov{\pi_{n\ast}\OO_{\PP^{n}_{\ZZ}}(k)})-\
  \pi_{n\ast}(\chh(\ov{\OO_{\PP^{n}_{\ZZ}}(k)})\widehat{\Td}(\ov{\pi}_{n})).
\end{displaymath}
Observe it is enough to compute $t_{n,k}$ for $k\geq -n$, since the self-duality of the homogenous analytic torsion \cite[Thm. 9.12]{BurgosFreixasLitcanu:GenAnTor} immediately yields the relation
\begin{displaymath}
	t_{n,k}=(-1)^{n}t_{n,-k-n-1}.
\end{displaymath}
Therefore, from now on we restrict to this range of values of $k$.

We first compute
$\chh(\ov{\pi_{n\ast}\OO_{\PP^{n}_{\ZZ}}(k)})^{(1)}$. For $-n\leq k\leq -1$ this quantity vanishes:
\begin{displaymath}
	 \chh(\ov{\pi_{n\ast}\OO_{\PP^{n}_{\ZZ}}(k)})^{(1)}=0,\quad -n\leq k\leq -1.
\end{displaymath}
Suppose now $k\geq 0$. Using that the volume
form $1/n!\omega _{FS}^{n}$ is given, in a coordinate patch, by
\begin{displaymath}
  \mu =\left(\frac{i}{2\pi }\right)^{n}\frac{\dd z_{1}\land \dd \ov
    z_{1}\land\dots \land \dd z_{n}\land \dd \ov
    z_{n}}{(1+\sum_{i=1}^{n}z_{i}\ov z_{i})^{n+1}},
\end{displaymath}
it is easy to see that the basis $\{x_{0}^{a_{0}}\dots
x_{n}^{a_{n}}\}_{a_{0}+\dots+a_{n}=k}$ is orthonormal and satisfies
\begin{displaymath}
  \|x_{0}^{a_{0}}\dots
x_{n}^{a_{n}}\|^{2}_{L^{2}}=\frac{a_{0}!\dots a_{n}!}{(k+n)!}.
\end{displaymath}
Therefore
\begin{displaymath}
  \chh(\ov{\pi_{n\ast}\OO_{\PP^{n}_{\ZZ}}(k)})^{(1)}=
  \sum_{a_{0}+\dots+a_{n}=k}-\left(\frac{1}{2}\right)
  \log\left(\frac{a_{0}!\dots a_{n}!}{(k+n)!}\right).
\end{displaymath}

To compute
$\pi_{n\ast}(\chh(\ov{\OO_{\PP^{n}_{\ZZ}}(k)})\widehat{\Td}(\ov{\pi}_{n}))$
we follow \cite{GilletSoule:ATAT}, where the case $k=0$ is
considered. Let $\alpha _{n,k}$ be the coefficient of $x^{n+1}$
in the power series
\begin{math}
e^{kx}\left(\frac{x}{1-e^{-x}}\right)^{n+1}  
\end{math}
and let $\beta _{n,k}$ be the coefficient of $x^{n}$ in the power series
\begin{math}
  \int_{0}^{1}\frac{\phi(t)-\phi(0)}{t}\dd t, 
\end{math}
where
\begin{displaymath}
  \phi (t)=e^{kx}\left(\frac{1}{tx}-\frac{e^{-tx}}{1-e^{-tx}}\right)
  \left(\frac{x}{1-e^{-x}}\right)^{n+1}.
\end{displaymath}
Then, by a slight modification of the argument in \cite [Proposition
2.2.2 \& 2.2.3]{GilletSoule:ATAT}, we derive
\begin{displaymath}
  \pi
  _{n\ast}(\chh(\ov{\OO_{\PP^{n}_{\ZZ}}(k)})
  \widehat{\Td}(\ov{\pi}_{n}))^{(1)}=\frac{1}{2}   
  \amap\left(\alpha _{n,k}\sum_{p=1}^{n}\sum_{j=1}^{p}\frac{1}{j}+\beta
    _{n,k}\right).
\end{displaymath}
The factor $(1/2)$ appears from the different normalization used here (see
\cite[Theorem 3.33]{BurgosKramerKuehn:accavb}). We thus have to determine the coefficients $\alpha_{n,k}$ and $\beta_{n,k}$. The numbers $\alpha_{n,k}$ can be obtained similarly to \cite[Eq. (27)]{GilletSoule:ATAT}. We obtain
\begin{displaymath}
	\alpha_{n,k}=\begin{cases}
		0	&\text{for } -n\leq k\leq -1,\\
		\binom{k+n}{n}	&\text{for }k\geq 0.
	\end{cases}
\end{displaymath}
The values of $\beta_{n,k}$ are expressed in terms of some secondary Todd numbers, that in turn are determined by a generating series:
\begin{displaymath}
	\beta_{n,k}=\sum_{j=0}^{n}\widetilde{\Td}_{n-j}\frac{k^{j}}{j!},
\end{displaymath}
where the $\widetilde{\Td}_{m}$ are given by the equality of generating series
\begin{equation}\label{eq:todd_numbers}
	\sum_{m\geq 0}\frac{\widetilde{\Td}_{m}}{m+1}T^{m+1}=
	\sum_{m\geq 1}\frac{\zeta(-(2m-1))}{2m-1}\frac{y^{2m}}{(2m)!},\quad T=1-e^{-y}.
\end{equation}
Here $\zeta$ stands for the Riemann zeta function. The result is a direct consequence of the definition of $\beta_{n,k}$ and the computation of the numbers $\beta_{n,0}$ in \cite[Prop. 2.2.3 \& Lemma 2.4.3]{GilletSoule:ATAT}. 

We summarize these computations for the principal characteristic numbers $t_{n,k}$, $-n\leq k\leq 0$, since it is particularly pleasant.
\begin{proposition}
The principal characteristic numbers $t_{n,k}$ are given by
\begin{displaymath}
	t_{n,k}=\begin{cases}
		-\frac{1}{2}\sum_{p=1}^{n}\sum_{j=1}^{p}\frac{1}{j},	&\text{for }k=0,\\	
		\\
	-\frac{1}{2}\sum_{j=0}^{n}\widetilde{\Td}_{n-j}\frac{k^{j}}{j!},		&\text{for } -n\leq k\leq -1,
\end{cases}
\end{displaymath}
where the sequence of numbers $\widetilde{\Td}_{m}$, $m\geq 0$, is determined by \eqref{eq:todd_numbers}. 
\end{proposition}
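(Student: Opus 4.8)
The plan is to read the proposition off from the arithmetic Grothendieck--Riemann--Roch theorem, now available as Theorem \ref{thm:5}, by feeding into it the characteristic--class computations assembled in the preceding paragraphs. First I would apply Theorem \ref{thm:5} to the structural morphism $\ov{\pi}_{n}\colon\PP^{n}_{\ZZ}\to\Spec\ZZ$ (with the Fubini--Study metric on its tangent complex), to $\alpha=\ov{\OO(k)}$ with Fubini--Study metric, and to the homogeneous torsion theory $T^{h}$, whose real additive genus is $S=0$. Since $S=0$ the correction term in \eqref{eq:29} drops out; unwinding the definition of $\ov{\pi}_{n\,\ast}$ on arithmetic $K$-theory (the only genuinely new contribution being the torsion number $t_{n,k}$) and taking the degree-one component of $\bigoplus_{p}\cha^{p}_{\QQ}(\Spec\ZZ)$ yields the identity already recorded above,
\[
  \amap(t_{n,k})=\chh\bigl(\ov{\pi_{n\ast}\OO(k)}\bigr)^{(1)}
  -\pi_{n\ast}\bigl(\chh(\ov{\OO(k)})\,\widehat{\Td}(\ov{\pi}_{n})\bigr)^{(1)}
\]
in $\cha^{1}(\Spec\ZZ)\cong\RR$. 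As $\amap\colon\RR\to\cha^{1}(\Spec\ZZ)$ is an isomorphism, this identity determines $t_{n,k}$; it then suffices to substitute the two quantities computed earlier, namely the arithmetic degree $\chh(\ov{\pi_{n\ast}\OO(k)})^{(1)}$ (which vanishes for $-n\le k\le-1$ since $\pi_{n\ast}\OO(k)=0$, and equals the $\log(n!)$-type arithmetic degree of $\pi_{n\ast}\OO$ with its $L^{2}$-metric when $k=0$) and the Gillet--Soul\'e style evaluation $\pi_{n\ast}(\chh(\ov{\OO(k)})\widehat{\Td}(\ov{\pi}_{n}))^{(1)}=\frac{1}{2}\amap\bigl(\alpha_{n,k}\sum_{p=1}^{n}\sum_{j=1}^{p}\frac{1}{j}+\beta_{n,k}\bigr)$.

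The assembly then splits into two cases. For $-n\le k\le-1$ one has $\alpha_{n,k}=0$ and $\chh(\ov{\pi_{n\ast}\OO(k)})^{(1)}=0$, so the identity reduces to $\amap(t_{n,k})=-\frac{1}{2}\amap(\beta_{n,k})$, hence $t_{n,k}=-\frac{1}{2}\beta_{n,k}=-\frac{1}{2}\sum_{j=0}^{n}\widetilde{\Td}_{n-j}\frac{k^{j}}{j!}$, the secondary Todd numbers $\widetilde{\Td}_{m}$ being those described by \eqref{eq:todd_numbers}. For $k=0$ one uses $\alpha_{n,0}=\binom{n}{n}=1$ (the Hirzebruch--Riemann--Roch Euler characteristic of $\PP^{n}$) and the explicit value of $\beta_{n,0}$ supplied by \cite[Prop.~2.2.3 \& Lemma~2.4.3]{GilletSoule:ATAT}; the point is that the $\log(n!)$ arising there is precisely cancelled by the $L^{2}$-contribution $\chh(\ov{\pi_{n\ast}\OO(0)})^{(1)}$, so that one is left with the clean value $t_{n,0}=-\frac{1}{2}\sum_{p=1}^{n}\sum_{j=1}^{p}\frac{1}{j}$. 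Equivalently, the $k=0$ case is simply equation \eqref{eq:32} of Proposition \ref{prop:7} rewritten through $t'_{n,0}=t_{n,0}-\frac{1}{2}\log(n!)$ and $\ov{\pi_{n\ast}\OO(0)}'=\ov{\OO}_{\ZZ}$.

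The substitutions themselves are routine. The one genuinely delicate ingredient --- and the one that really does the work --- is the Gillet--Soul\'e type evaluation of $\pi_{n\ast}(\chh(\ov{\OO(k)})\widehat{\Td}(\ov{\pi}_{n}))^{(1)}$: computing the arithmetic characteristic classes of the Fubini--Study metrized tangent bundle of $\PP^{n}$, identifying the arithmetic self-intersection of $\ov{\OO(1)}_{FS}$ that generates the harmonic sums, organizing the $\widetilde{\Td}_{m}$ through the generating series \eqref{eq:todd_numbers}, and keeping careful track of the normalization factor $\frac{1}{2}$ coming from \cite[Theorem~3.33]{BurgosKramerKuehn:accavb}; together with, in the $k=0$ case, the verification that the two $\log(n!)$ contributions cancel so that the final formula takes the pleasant form stated.
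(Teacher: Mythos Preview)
Your proposal is correct and follows the paper's approach exactly: the paper does not give a separate proof of this proposition, but rather presents it as a direct summary of the preceding computations (the identity $\amap(t_{n,k})=\chh(\ov{\pi_{n\ast}\OO(k)})^{(1)}-\pi_{n\ast}(\chh(\ov{\OO(k)})\widehat{\Td}(\ov{\pi}_{n}))^{(1)}$ obtained from Theorem~\ref{thm:5}, together with the explicit values of $\alpha_{n,k}$, $\beta_{n,k}$, and the $L^{2}$ arithmetic degree). Your case split and the identification of the $\log(n!)$ cancellation in the $k=0$ case are precisely the substitutions the paper has in mind.
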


\newcommand{\noopsort}[1]{} \newcommand{\printfirst}[2]{#1}
  \newcommand{\singleletter}[1]{#1} \newcommand{\switchargs}[2]{#2#1}
  \def\cprime{$'$}
\providecommand{\bysame}{\leavevmode\hbox to3em{\hrulefill}\thinspace}
\providecommand{\MR}{\relax\ifhmode\unskip\space\fi MR }
\providecommand{\MRhref}[2]{%
  \href{http://www.ams.org/mathscinet-getitem?mr=#1}{#2}
}
\providecommand{\href}[2]{#2}


\begin{thebibliography}{10}

\bibitem{Bismut:Asterisque}
J.-M. Bismut, \emph{Holomorphic families of immersions and higher analytic
  torsion forms}, Ast\'erisque, vol. 244, SMF, 1997.

\bibitem{BismutGilletSoule:at}
J.-M. Bismut, H.~Gillet, and C.~Soul{\'e}, \emph{Analytic torsion and
  holomorphic determinant bundles {I}}, Comm. Math. Phys. \textbf{115} (1988),
  49--78.

\bibitem{BismutGilletSoule:atII}
\bysame, \emph{Analytic torsion and holomorphic determinant bundles {II}},
  Comm. Math. Phys. \textbf{115} (1988), 79--126.

\bibitem{BismutGilletSoule:atIII}
\bysame, \emph{Analytic torsion and holomorphic determinant bundles {III}},
  Comm. Math. Phys. \textbf{115} (1988), 301--351.

\bibitem{BismutKohler}
J.-M. Bismut and K.~K\"ohler, \emph{Higher analytic torsion forms for direct
  images and anomaly formulas}, J. Alg. Geom. \textbf{1} (1992), 647--684.

\bibitem{BismutLebeau:CiQm}
J.-M. Bismut and G.~Lebeau, \emph{Complex immersions and {Q}uillen metrics},
  Publ. Math. IHES \textbf{74} (1991), 1--298.

\bibitem{Burgos:Gftp}
J.~I. Burgos~Gil, \emph{Green forms and their product}, Duke Math. J.
  \textbf{75} (1994), 529--574.

\bibitem{Burgos:CDB}
\bysame, \emph{Arithmetic {C}how rings and {D}eligne-{B}eilinson cohomology},
  J. Alg. Geom. \textbf{6} (1997), 335--377.

\bibitem{BurgosFreixasLitcanu:GenAnTor}
J.~I. Burgos~Gil, G.~Freixas~i Montplet, and R.~Li\c{t}canu, \emph{Generalized
  holomorphic analytic torsion}, arxiv:1011.3702.

\bibitem{BurgosFreixasLitcanu:HerStruc}
\bysame, \emph{Hermitian structures on the derived category of coherent
  sheaves}, arxiv:1102.2063.

\bibitem{BurgosKramerKuehn:accavb}
J.~I. Burgos~Gil, J.~Kramer, and U.~K\"uhn, \emph{Arithmetic characteristic
  classes of automorphic vector bundles}, Documenta Math. \textbf{10} (2005),
  619--716.

\bibitem{BurgosKramerKuehn:cacg}
\bysame, \emph{Cohomological arithmetic {C}how rings}, J. Inst. Math. Jussieu
  \textbf{6} (2007), no.~1, 1--172.

\bibitem{BurgosLitcanu:SingularBC}
J.~I. Burgos~Gil and R.~Li\c{t}canu, \emph{Singular {B}ott-{C}hern classes and
  the arithmetic {G}rothendieck-{R}iemann-{R}och theorem for closed
  immersions}, Doc. Math. \textbf{15} (2010), 73--176.

\bibitem{GilletRoesslerSoule:_arith_rieman_roch_theor_in_higher_degrees}
H.~Gillet, D.~R\"ossler, and C.~Soul\'e, \emph{An arithmetic {R}iemann-{R}och
  theorem in higher degrees}, Ann. Inst. Fourier \textbf{58} (2008),
  2169--2189.

\bibitem{GilletSoule:ait}
H.~Gillet and C.~Soul{\'e}, \emph{Arithmetic intersection theory}, Publ. Math.
  IHES \textbf{72} (1990), 94--174.

\bibitem{GilletSoule:vbhm}
\bysame, \emph{Characteristic classes for algebraic vector bundles with
  hermitian metric {I}, {II}}, Annals of Math. \textbf{131} (1990),
  163--203,205--238.

\bibitem{GilletSoule:ATAT}
\bysame, \emph{Analytic torsion and the arithmetic {T}odd genus}, Topology
  \textbf{30} (1991), no.~1, 21--54, With an appendix by D. Zagier.

\bibitem{GilletSoule:aRRt}
\bysame, \emph{An arithmetic {R}iemann-{R}och theorem}, Invent. Math.
  \textbf{110} (1992), 473--543.

\bibitem{GriffithsHarris:pag}
P.~Griffiths and J.~Harris, \emph{Principles of algebraic geometry}, John Wiley
  {\&} Sons, Inc., 1994.

\bibitem{Hormander:MR1065993}
L.~H{\"o}rmander, \emph{The analysis of linear partial differential operators.
  {I}}, second ed., Grundlehren der Mathematischen Wissenschaften [Fundamental
  Principles of Mathematical Sciences], vol. 256, Springer-Verlag, Berlin,
  1990, Distribution theory and Fourier analysis.

\bibitem{KawaguchiMoriwaki:isfav}
S.~Kawaguchi and A.~Moriwaki, \emph{Inequalities for semistable families of
  arithmetic varieties}, J. Math. Kyoto Univ. \textbf{41} (2001), no.~1,
  97--182.

\end{thebibliography}

\end{document}